\title{Some Lower Bounds on Shelah Rank in the Free Group}
\date{\today}
\author{Javier de la Nuez Gonz\'alez\footnote{The research leading to these results has received funding from the European Research Council under the European Unions Seventh Framework Programme (FP7/2007- 2013)/ERC Grant Agreement No. 291111}
, Chlo\'e Perin\footnote{This research was partially supported by the ISRAEL SCIENCE FOUNDATION (grant No. 1480/16)}\; and Rizos Sklinos}
\begin{document}

\newcommand{\SR}[0]{R^{\infty}}
\newcommand{\setof}[2]{\{\, #1\,\,|\,\,#2\,\}}
\newcommand{\bunion}[3]{\bigcup\limits_{\substack{#1}}^{#3}#2}
\newcommand{\enum}[2]{\begin{enumerate}[\hspace*{0.5cm}#1]#2\end{enumerate}}

\newcommand{\nil}{\emptyset}
\newcommand{\subg}[1]{\langle #1 \rangle}
\newcommand{\frp}{\ast}
\newcommand{\submod}[0]{\preccurlyeq}
\newcommand{\GG}{\ensuremath{\mathcal{G}}}

\newtheorem{clm}[theorem]{Claim}
\maketitle

\begin{abstract}
	We give some lower bounds on the Shelah rank of varieties in the free group whose coordinate groups are hyperbolic towers.
\end{abstract}

\section{Introduction}

  We initiate the study of Shelah rank for definable sets in the theory of the free group.
  We are motivated by the recent addition of the above mentioned theory to the family of stable theories by Sela (see \cite{SelaStability}).
 Poizat had showed in \cite{PoizatGenericAndRegular} that the theory of free groups of infinite countable rank is not superstable, and the solution to Tarski problem by Sela \cite{Sela4} (see also \cite{KharlampovichMyasnikov}) implies that this theory coincide with that of any non abelian finitely generated free groups. 
 
  Still one would like to recover the "superstable part" of the free group, i.e. formulas that admit ordinal Shelah rank, as well as to calculate said rank.
  
  In this paper we are able to give some lower bounds on Shelah rank for sets defined by systems of equations of a particular type, namely varieties whose "co-ordinate group" has the structure of a hyperbolic tower (see Section \ref{Towers} for a definition).
  One can read off a lower bound for the Shelah rank of such "hyperbolic tower varieties" from the complexity of the tower under consideration.
  
  Tower varieties are of special interest in the study of definable sets in the free groups. Indeed, Sela proved in \cite{SelaImaginaries} that to any definable set $X$ can be associated a finite set of tower varieties, that he calls an \textbf{envelope} of $X$, so that points which he calls \textbf{generic} with respect to this envelope must lie in the definable set. There are reasons to hope that the Shelah rank of the definable set is closely related to that of the tower varieties in an envelope of it, which is why we started our study of Shelah rank by trying to compute the rank of such varieties.
  
  Our proof makes heavy use of the curve complex (see Section \ref{FreeProductSec}),
  which is not surprising since the curve complex has already been proved useful in describing forking independence
  in the theory of the free group (see \cite{PerinSklinosForking}).

\section{Shelah Rank}
  
  \subsection{Definition and first properties}
    
    We fix a stable first order theory $T$ and we work in a big saturated model $\mathbb{M}$. This enables us to assume that any countable model $M$ of the theory is elementary embedded into $\mathbb{M}$, in such a way that any automorphism of $M$ extends to an automorphism of $\mathbb{M}$.
    
    \begin{definition}
    	Let $\phi(\bar{x},\bar{b})$ be a first order formula over parameters $\bar{b}$ living in $\mathbb{M}$ and let $A$ be a subset of $\mathbb{M}$. Then
    	$\phi(\bar{x},\bar{b})$ forks over $A$ if there are $k<\omega$ and an infinite sequence $(\bar{b}_i)_{i<\omega}$ of tuples in $\mathbb{M}$ with
    	$tp(\bar{b}_i/A)=tp(\bar{b}/A)$ for each $i<\omega$, such that the set $\{\phi(\bar{x},\bar{b}_i): i<\omega\}$ is $k$-inconsistent.
    \end{definition}
    
    Recall that a definable set in a theory $T$ is an equivalence class of formulas with parameters under the equivalence relation generated by $\phi(x,b) \sim \psi(x,c)$ iff $T \models \phi(x,b) \leftrightarrow \psi(x,c)$. For any model $M$ of $T$ containing $b$ we then denote $\phi(M,b)$ the subset $ \{ m \mid M \models \phi(m,b)\}$ of $M^{\abs{x}}$, and we call it a definable subset of $M$. 
    
    \begin{remark}  An equivalent characterization of forking can be given purely in terms of the set $X$ defined by $\phi$ in $\mathbb{M}$ and the action of the automorphism group of $\mathbb{M}$: the formula $\phi$ forks over $A$ if and only if there are $k < \omega$ and an infinite sequence of automorphisms $\sigma_n \in \Aut_A(\mathbb{M})$ such that the sets $\sigma_n(X)$ are $k$-wise disjoint.
    \end{remark}

In particular, this implies that forking over $A$ is really a property of the definable set, not just of the formula $\phi(x,b)$, justifying the following definition:
\begin{definition} We say that a definable set $X$ forks over $A$ if some (equivalently any) formula defining it forks over $A$. 
\end{definition}

    \begin{definition}
    	Given two non-empty definable sets $X,Y$ we say that $X<Y$ if $X\subset Y$ and $Y$ is definable over a set $A$ of parameters such that $X$ forks over $A$.
    \end{definition}
    
    Note that this defines a partial order on the class of definable sets (alternatively, of definable formulas).
    
    \begin{definition}
    	We say that a definable set (alternatively, any of its defining formulas) is superstable if there is no infinite descending chain of definable sets $X:=X_1>X_2>\ldots>X_n>\ldots$ .
    \end{definition}

    \begin{definition} We define the \textbf{Shelah rank relative to a set} $A$ of parameters as the lowest function which assigns a value $\SR_A(X)\in Ord\cup\{\infty\}$ (where $\infty$ is an imaginary value bigger than any ordinal) to every non-empty definable set $X$, and satisfies that if $X \subseteq Y$ for some definable set $Y$, and there exists a set of parameters $B$ containing $A$ over which $Y$ is definable and such that $X$ forks over $B$, then $\SR_A(X) < \SR_A(Y)$.
    
    	This is equivalent to defining the Shelah rank in the theory where the language has been enriched by some constants for the elements of $A$. 
    	
    	Note that if $A \subseteq A'$ we have $\SR_{A'}(X) \leq \SR_A(X)$. One refers to $\SR_{\emptyset}(X)$ as simply the Shelah rank of $X$ or $\SR(X)$.
    \end{definition}
    
    In particular, $\SR(X)$ is the foundation rank of the partial order $<$ and a definable set $X$ is superstable if and only if $\SR(X)\in Ord$. 
    
    \begin{fact}
    	\label{rank basic facts}
    	\begin{enumerate}
    		 \item If there is a definable bijection between the definable sets $X$ and $Y$, then $\SR(X)=\SR(Y)$.
    		 \item $\SR_{A}(X)=\SR_{A}(\sigma(X))$ for $X$ any set definable over a model $M$ containing $A$ and any automorphism $\sigma$ of $M$ fixing $A$.
    		 \item If $X$ and $Y$ are definable sets with $X \subseteq Y$, then $\SR_{A}(X) \leq \SR_{A}(Y)$ for any $A$.
    	\end{enumerate}
    \end{fact}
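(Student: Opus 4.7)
All three items are proved by transfinite induction on the ordinal $\SR_A(X)$, using the recursive characterization $\SR_A(Y)\geq\alpha+1$ iff there exist $X\subsetneq Y$ and $B\supseteq A$ with $Y$ definable over $B$, $X$ forking over $B$, and $\SR_A(X)\geq\alpha$. I treat (2) first, as it sets the template; (1) runs the same argument with a definable bijection in place of the automorphism; (3) requires an extra ingredient and is where the main obstacle lies.

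For (2), the key observation is that an automorphism $\sigma\in\Aut_A(\mathbb{M})$ preserves the rank-decreasing relation: a witness $X'\subsetneq X$ via $B\supseteq A$ is transported to a witness $\sigma(X')\subsetneq\sigma(X)$ via $\sigma(B)\supseteq A$. Applying $\sigma$ to a defining formula of $X$ over $B$ yields one for $\sigma(X)$ over $\sigma(B)$, and the Remark's automorphism characterization of forking transports witnesses by conjugation: $\tau_n\in\Aut_B(\mathbb{M})$ making $\tau_n(X')$ pairwise $k$-disjoint yields $\sigma\tau_n\sigma^{-1}\in\Aut_{\sigma(B)}(\mathbb{M})$ making conjugates of $\sigma(X')$ pairwise $k$-disjoint. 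Induction then gives $\SR_A(\sigma(X))\geq\SR_A(X)$, and applying $\sigma^{-1}$ yields equality.

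Item (1) follows the same template with a definable bijection $f:X\to Y$ over parameters $B_f$ in place of $\sigma$. The induced map $X_1\mapsto f(X_1)$ preserves inclusion and definability (with base enlarged by $B_f$) and carries forking witnesses through $f$; a witness $X'\subsetneq X$ via $B\supseteq A$ thus gives a witness $f(X')\subsetneq Y$ via $B\cup B_f$, and the induction yields $\SR_A(X)\leq\SR_A(Y)$, with $f^{-1}$ giving the other inequality.

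Item (3), the inclusion monotonicity $X\subseteq Y\Rightarrow\SR_A(X)\leq\SR_A(Y)$, is the main obstacle. In the successor step, given a witness $X'\subsetneq X$ for $\SR_A(X)\geq\alpha+1$ with $X$ over $B\supseteq A$ and $X'$ forking over $B$, one needs a witness for $\SR_A(Y)\geq\alpha+1$. Taking $Y'=X'$ is natural since $X'\subseteq Y$, but one then needs a base $B'$ over which $Y$ is definable and $X'$ still forks. The difficulty is that forking is only monotonic under \emph{restriction} of the base, so naively enlarging $B$ to include $Y$'s defining parameters $B_Y$ may destroy forking. To circumvent this, let $c_{X'}$ denote defining parameters of $X'$; by the existence of non-forking extensions in the stable theory we choose a $B$-conjugate $c'$ of $c_{X'}$ non-forking-independent from $B_Y$ over $B$. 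Substituting $c'$ for $c_{X'}$ yields a set $X''$ with $\SR_A(X'')=\SR_A(X')$ by (2), still contained in $X$ (since the conjugating $B$-automorphism fixes $X$ setwise), and a Morley-sequence argument shows $X''$ forks over $B\cup B_Y$. Hence $X''\subsetneq Y$ serves as the required witness over $B\cup B_Y$, completing the induction.
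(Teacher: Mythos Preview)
The paper states this as a \emph{Fact} without proof, so there is no argument in the paper to compare against; I evaluate your proof on its own merits.

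Your argument for (2) is correct: an automorphism of the monster fixing $A$ transports the whole rank-witnessing structure, and induction goes through cleanly. Your argument for (3) correctly identifies the genuine obstacle---that enlarging the base $B$ to include parameters $B_Y$ for $Y$ may destroy forking---and your fix via a non-forking $B$-conjugate of $X'$ is the right idea. The phrase ``a Morley-sequence argument'' is, however, hiding the actual content: what you need is the standard lemma that in a stable (indeed simple) theory, if $\psi(x,c)$ forks over $B$ and $D$ is independent from $c$ over $B$, then $\psi(x,c)$ forks over $BD$. This is where the work is, and it should be stated rather than gestured at.

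There is a genuine gap in your treatment of (1). You claim it ``follows the same template'' as (2), with the definable bijection $f$ playing the role of the automorphism $\sigma$, but this analogy breaks down. A definable bijection $f:X\to Y$ over parameters $B_f$ is not an automorphism of $\mathbb{M}$: the automorphisms $\sigma_n\in\Aut_B(\mathbb{M})$ witnessing that $X'$ forks over $B$ need not fix $B_f$, so in general $\sigma_n(f(X'))\neq f(\sigma_n(X'))$, and you cannot directly conclude that $f(X')$ forks over $B\cup B_f$. Equivalently, in the formula picture, the sequence $(c_i)$ witnessing that $\psi(x,c)$ divides over $B$ has the right type over $B$ but not necessarily over $B\cup B_f$. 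The repair is exactly the independence trick you introduced for (3): first replace $X'$ by an $\Aut_B$-conjugate whose defining parameters are independent from $B_f$ over $B$, and only then push through $f$. So the logical dependency is (2) $\Rightarrow$ (3) $\Rightarrow$ (1), and (1) is not the easy variant of (2) that your write-up suggests.
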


  \subsection{Lower bound on Shelah rank through automorphisms}
    
    By passing to the expansion $\mathbb{M}^{eq}$ one can always assume a given first order theory has elimination of imaginaries. This implies the existence, for every definable set $X$, of a defining formula $\phi(x,a)$ for $X$ such that any automorphism of (an extension of) $\mathbb{M}$ that fixes the definable set $X$ set-wise must fix the parameter $a\in\mathbb{M}^{<\omega}$ as well. This allows us to formulate the criterion for an increase in Shelah rank that we will use in this paper:
    
    \begin{lemma}
    \label{automorphisms and rank}Let $\mathbb{M}$ be a model with a fixed parameter subset $A$. Suppose  $X \subset M^{d}$ is definable (not necessarily over $A$ only).
    If there exists a subset $Y$ of $X$ definable over $\mathbb{M}$, an integer $k$, and a sequence $\sigma_n \in \Aut_A(\mathbb{M})$ such that the translates $\sigma_n(Y)$ are $k$-wise disjoint, and the $\sigma_n$ preserve $X$ set-wise, then $R_A^{\infty}(X) \geq R_A^{\infty}(Y) + 1$.
    \end{lemma}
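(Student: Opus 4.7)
The plan is to verify the conditions in the definition of Shelah rank relative to $A$: namely, to exhibit a parameter set $B \supseteq A$ over which $X$ is definable, and show that $Y$ forks over $B$. Once these are established, the inequality $\SR_A(Y) < \SR_A(X)$ is immediate from the definition.

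First I would use elimination of imaginaries in $\mathbb{M}^{eq}$ to obtain a canonical parameter $b$ for the definable set $X$, meaning a tuple in $\mathbb{M}$ with the property that an automorphism of $\mathbb{M}$ stabilises $X$ set-wise if and only if it fixes $b$ pointwise. Setting $B := A \cup \{b\}$, the set $X$ is by construction definable over $B$, and $B$ contains $A$.

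Next I would observe that the hypothesis that each $\sigma_n$ preserves $X$ set-wise, combined with the canonicity of $b$, forces $\sigma_n(b) = b$. Hence $\sigma_n \in \Aut_B(\mathbb{M})$. Since by hypothesis the translates $\sigma_n(Y)$ are $k$-wise disjoint for some fixed $k < \omega$, the equivalent characterisation of forking stated in the Remark immediately yields that $Y$ forks over $B$. Applying the defining clause for $\SR_A$ (with $Y \subseteq X$, $X$ definable over $B \supseteq A$, and $Y$ forking over $B$) gives $\SR_A(Y) < \SR_A(X)$, i.e.\ $\SR_A(X) \geq \SR_A(Y) + 1$.

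There is no real obstacle here beyond unpacking the definitions; the whole argument rests on the existence of a canonical parameter for $X$, which is why the appeal to elimination of imaginaries in $\mathbb{M}^{eq}$ is essential. Were one working without it, the automorphisms $\sigma_n$ might fail to fix any natural parameter defining $X$, and there would be no reason for them to lie in $\Aut_B(\mathbb{M})$ for any $B$ over which $X$ is definable.
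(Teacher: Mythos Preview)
Your proof is correct and follows essentially the same approach as the paper: pass to $\mathbb{M}^{eq}$ to obtain a canonical parameter $b$ for $X$, observe that the $\sigma_n$ must fix $b$ since they preserve $X$ set-wise, and conclude that $Y$ forks over $Ab$. The only cosmetic difference is that the paper phrases the forking step via the original definition (the sequence $\sigma_n(c)$ of translates of a defining parameter for $Y$), whereas you invoke the equivalent automorphism characterisation from the Remark; these are the same argument.
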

    \begin{proof}
    Indeed, by the discussion above we can assume that the $\sigma_{n}$ fix a defining tuple of parameters $b$ for $X$. If $\psi(x,c)$ is a defining formula for $Y$ then the sequence $\{\sigma_n(c)\}$ witnesses the fact that $\psi$ forks over $Ab$, so that  $R_A^{\infty}(\phi) > R_A^{\infty}(\psi)$.

    \end{proof}

  \subsection{Shelah rank of varieties over the free group}
    
    In the particular case of the free group, \cite{PerinSklinosSuperstable} gives a sufficient criterion to determine whether a formula has infinite Shelah rank. We use it here to give a sufficient condition for a variety to have infinite Shelah rank.
    \newcommand{\genr}[1]{\pmb{#1}}   
    \newcommand{\xx}[0]{\pmb{x}}
    
    Let $a = (a_1, \ldots, a_n)$ be a basis of the free group $\F$. To any finite system of equations $\Sigma(x,a)=1$ in the language of groups with parameters in $\F$ one can associate the group $G_{\Sigma}$ given by the presentation $\subg{\genr{x},a\,|\,\Sigma(\genr{x},a)=1}$, where $\xx$ is a tuple of generators in bijective correspondence with the variables in the tuple $x$. 
    
    The set $V_{\Sigma}\subset\F^{|x|}$ of solutions to the system $\Sigma(x,a)=1$ is clearly a definable set, to which we will refer as the variety associated  with $\Sigma$. The group $G_{\Sigma}$ is sometimes called the coordinate group of the variety. 
    
    It can be easily shown that $V_{\Sigma}$ coincides with the images of $\xx$ by the collection $V_{\F}^{G}(\xx)$ (or later simply $V^{G}$) of all homomorphisms from $G_{\Sigma}$ to $\F$ compatible with the two embeddings of the tuple $a$. In particular, $\F$ has to embed into $G$ for such a solution to exist.
    
    \begin{remark}
    Although $V_{\F}^{G}(\xx)$ clearly depends on the tuple $\xx$, for any fixed $G$ and a distinguished embedding of $\F$ into $G$ there is a definable bijection between the sets $V_{\F}^{G}(\xx)$ and $V_{\F}^{G}(\xx')$ given by different tuples of generators $\xx'$ and $\xx$.
    \end{remark}
    
    Conversely, any finitely presented group equipped with a distinguished embedding of $\F$ into it and a distinguished tuple of generators (rel the image of $\F$) is equal to $G_{\Sigma}$ for any system $\Sigma(x,a)=1$ of equations given by a presentation (the terms in the system are words representing the relators of the presentation).
    
    By \cite{Sela1}, $G_{\Sigma}$ admits finitely many quotients $q_1, \ldots, q_s$ with $q_i: G_{\Sigma} \to L_i$ such that the groups $L_i$ are limit groups and any homomorphism $f: G_{\Sigma} \to \F$ which respects the given embedding of $a$ factors through one of the quotients $q_i$. Equivalently, if the limit group $L_i$ is given the presentation $\langle x^i \mid \Sigma_i(x^i,a) \rangle$, where $x^i=q_i(\xx)$, then    
    the variety $V_{\Sigma}$ over $\F$ is the union of the varieties $V_{\Sigma_{i}}$. It can be shown that varieties whose coordinate groups are limit groups cannot be decomposed into a finite union of proper subvarieties, so that they are the "irreducible components" of varieties over the free group.
    
    Thus without loss of generality when proving lower bounds on the Shelah rank one may assume that the coordinate group is a limit group. 
    
    We now want to give a sufficient condition for such a variety to have infinite Shelah rank. We first recall the following definition (see page 7 in \cite{Sela4}).
    
    \begin{definition} \label{MinimalRankDef} Let $L$ be a limit group equipped with a fixed embedding of the free group $\F(a)$. We say that $L$ has \textbf{minimal rank} relative to $\F(a)$ if there is no epimorphism $\pi: L \to F(a)* \F$  restricting to the identity on $\F(a)$, where $\F$ is a non-trivial free group.
    \end{definition}
    
    \begin{remark} \label{EquivalentDefMinimalRank} Suppose there exists a morphism $\pi: L \to \F(a)$ which is the identity on $a$ and factors through a non-trivial free product $A * B$ of groups $A,B$ where the image of $a$ is contained in $A$ and $B$ admits a non trivial epimorphism to a free group (for example, $B$ is a limit group). Then $L$ does not have minimal rank.
    
    Conversely, if no such morphism exist, $L$ has minimal rank. This gives us an equivalent characterization of minimal rank that will prove itself useful.
    \end{remark}
    
    \begin{proposition} \label{SuperstableDoesNotExpand} Let $G = \langle x, a \mid \Sigma(x,a) \rangle$ be a limit group as above and let $V^{G}$ be the associated variety.
    
    If $G$ does not have minimal rank, then $R^{\infty}(V^G) = \infty$.
    \end{proposition}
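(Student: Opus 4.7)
The plan is to exploit the failure of minimal rank to construct a definable injection from $\mathbb{M}^m$ into $V^G$, thereby transferring infinite Shelah rank from the ambient universe (known to be non-superstable by \cite{PerinSklinosSuperstable}) into the variety.

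First, from Definition \ref{MinimalRankDef} the hypothesis yields a surjective homomorphism $\pi\colon G \to \F(a) \ast \F_1$ restricting to the identity on $\F(a)$, where $\F_1$ is non-trivial with basis $c = (c_1, \ldots, c_m)$. Set $w(a,c) := \pi(\xx)$, a tuple of $n$ words in the alphabet $a \cup c$. The word map induces a definable map (over $a$)
$$
f\colon \mathbb{M}^m \longrightarrow \mathbb{M}^n, \qquad e \longmapsto w(a,e).
$$
For any $e \in \mathbb{M}^m$ the assignment $a \mapsto a$, $c \mapsto e$ is a homomorphism $\F(a) \ast \F_1 \to \mathbb{M}$, which pre-composed with $\pi$ exhibits $w(a,e)$ as the image of $\xx$ under a homomorphism $G \to \mathbb{M}$; hence $w(a,e) \in V^G$ and therefore $f(\mathbb{M}^m) \subseteq V^G$.

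Second, I would show that $f$ is injective. Because $\pi$ is surjective, each $c_i$ lies in its image, so one can write $c_i = v_i(\pi(\xx), a) = v_i(w(a,c), a)$ for some group word $v_i$. Since this identity holds in the free group $\F(a) \ast \F_1$, it survives substitution in any group, and evaluating in $\mathbb{M}$ with $c \mapsto e$ gives $v_i(f(e), a) = e_i$. Thus $f$ admits a definable left inverse, so it is a definable bijection onto its image. By Fact \ref{rank basic facts}(1) the bijection transfers rank: $\SR(f(\mathbb{M}^m)) = \SR(\mathbb{M}^m)$. The non-superstability of the theory of the free group, established in \cite{PerinSklinosSuperstable}, produces a formula of infinite Shelah rank, so by Fact \ref{rank basic facts}(3) one gets $\SR(\mathbb{M}) = \infty$ and hence $\SR(\mathbb{M}^m) = \infty$. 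Applying Fact \ref{rank basic facts}(3) once more to $f(\mathbb{M}^m) \subseteq V^G$ yields $\SR(V^G) = \infty$.

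I expect the main subtlety to be the clean extraction of the Perin--Sklinos criterion in a form directly usable here: if the criterion in \cite{PerinSklinosSuperstable} is phrased in a more restrictive geometric language (e.g.\ via primitive elements, hyperbolic floors, or some specific type rather than the bare statement that $\SR(\mathbb{M}) = \infty$), one must verify that $f(\mathbb{M}^m)$ contains an explicit witness to that criterion. The remaining structural content of the argument is concentrated in the injectivity of $f$, which is exactly where the surjectivity onto a free product with a non-trivial free factor --- that is, the failure of minimal rank --- is used decisively.
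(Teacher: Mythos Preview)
Your argument is correct and takes a genuinely different route from the paper's.

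The paper invokes the main result of \cite{PerinSklinosSuperstable} in its contrapositive form: a formula $\phi(x,a)$ over $\F(a_1,\ldots,a_n)$ is superstable only if $\phi(\F(a_1,\ldots,a_n)) = \phi(\F(a_1,\ldots,a_{n+1}))$. The epimorphism $\pi\colon G \to \F(a)\ast\F$ witnessing the failure of minimal rank produces a solution of $\Sigma(x,a)=1$ lying in $\F(a)\ast\F$ but not in $\F(a)$, so the variety strictly expands and the criterion fires immediately. Your approach instead builds a definable injection $\mathbb{M}^m \hookrightarrow V^G$ from the surjectivity of $\pi$ and then transfers the infinite rank of the full sort. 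This is more constructive and makes transparent that $V^G$ contains a definable copy of affine $m$-space, whereas the paper's proof is a two-line application of a black-box criterion tailored exactly to this situation. Two small points: the non-superstability of the theory is originally due to Poizat \cite{PoizatGenericAndRegular} rather than \cite{PerinSklinosSuperstable} (the latter is the source of the expansion criterion the paper uses); and your passage from ``some formula has $\SR=\infty$'' to ``$\SR(\mathbb{M})=\infty$'' deserves a word --- either note that Poizat's argument already gives infinite rank for a one-variable formula, or invoke the Lascar inequalities to pass from $\SR(\mathbb{M}^k)=\infty$ down to $\SR(\mathbb{M})=\infty$.
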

    
    \begin{proof} We apply the main result of \cite{PerinSklinosSuperstable}, which shows that given a formula $\phi(x,a)$ over the free group $\F(a_1, \ldots, a_n)$, $\phi$ is superstable only if $$\phi(\F(a_1, \ldots, a_n)) = \phi(\F(a_1, \ldots, a_n, a_{n+1}))$$
    
    The surjective morphism $\pi: G \to \F(a)* \F$ in Definition \ref{MinimalRankDef} corresponds to a solution to the equation $\Sigma(x,a)=1$ which does not lie in $\F(a)$. This shows that the set defined by $\Sigma(x,a)=1$ is strictly bigger in $\F(a)*\F$ than in $\F(a)$, which proves the result.
    \end{proof}

\section{Tower varieties}
  
  \label{Towers}
  The aim of this section is to describe the definable sets for whose Shelah rank we will give a lower bound. They are varieties whose corresponding coordinate groups are hyperbolic towers.
  
  \subsection{Graphs of groups}
    \newcommand{\edin}[1]{\overline{#1}}
    \newcommand{\invf}{\edin{\cdot}}
    
    We assume familiarity with graph of groups decompositions, and will simply recall definitions and fix notations. A detailed introduction can be found in \cite{dicks1989groups} and \cite{serre1974trees}.
    
    A graph $X=(V,E)$ is given by a set $V$ of vertices, a set $E$ of oriented edges, a map $\alpha:E\to V$ assigning an origin to each oriented edge, and an involution $\overline{\cdot}:E\to E$ taking every edge to its inverse, from which it is distinct. 
    
    \begin{definition}
    A \textbf{graph of groups} consists of a connected graph $(E,V)$, the underlying graph, together with the following data:
    \begin{itemize}
    	\item for each $v\in V$, a group $\Gamma_{v}$  called the {\bf vertex group} associated to $v$.
      	\item for each $e\in E$, a subgroup $\Gamma_{e}\leq \Gamma_{\alpha(e)}$  called the {\bf edge group} associated to $e$.
    	\item  for each $e\in E$, an isomorphism $i_{e}:\Gamma_{e}\cong \Gamma_{\edin{e}}$, so that $i_{\edin{e}}=(i_{e})^{-1}$.
    \end{itemize}
    \end{definition}
    As is customary, we will abuse notation and denote such a graph of groups simply by its underlying graph $\Gamma$.
    
    \begin{definition}
    Let $\Gamma$ be a graph of groups and $Z$ a maximal subtree of $\Gamma$.
    Given any choice of presentations $\Gamma_{v}=\langle X_{v}\mid R_{v} \rangle$ of the vertex groups, \textbf{the fundamental group of} $\Gamma$ with respect to $Z$,
    denoted by $\pi(\Gamma,Z)$, is the group given by taking generator set 
    \begin{align*}
     \bigcup_{v\in V} X_{v}\cup \{ t_{e} \}_{e\in E}
    \end{align*}
    together with the relations
        \begin{align*}
        	\bigcup_{v\in V} R_{v}\cup \{t_{e}=1\}_{e\in Z}\cup\{t_e i_{\edin{e}}(g) t_{e}^{-1} g^ {-1}=1 ,\;t_{e}t_{\edin{e}}=1|e\in E, g\in \Gamma_{e}\}
        \end{align*}
    \end{definition}
    
     It can be shown that the natural homomorphisms from $G_{v}$ to the resulting $\pi(\Gamma,Z)$ are injective, thus we think of $G_v$ as a subgroup of the fundamental group and that different choices of presentations for $\Gamma_{v}$ result in equivalent presentations, so that the object $\pi(\Gamma,Z)$ does not depend on said presentations. The isomorphism class of $\pi(\Gamma,Z)$ is in fact independent of the choice of $Z$.   
     
    \begin{definition}
    A \textbf{graph of groups decomposition} of a group $G$ is an isomorphism between $G$ and the fundamental
    group of a graph of groups.
    \end{definition}
    
    An automorphism of a vertex group which respects the conjugacy class of incoming edge groups can be extended to an automorphism of the full fundamental group.
    \begin{lemma}
    \label{VertexAutomorphisms} Let $\Gamma$ be a graph of groups, and $Z$ a maximal subtree of $\Gamma$. Let
    $v$ be a vertex of $\Gamma$ and $\sigma$ an automorphism of $\Gamma_{v}$ such that
    for all $e$ with $\alpha(e)=v$ there exists $c_e \in \Gamma_v$ such that the restriction of $\sigma$ to $\Gamma_e$ is just the conjugation map 	$\iota_{c_e}$. Then $\sigma$ can be extended to an automorphism of $\pi(\Gamma,Z)$.
    \end{lemma}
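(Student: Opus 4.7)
The plan is to invoke the universal property of the fundamental group of a graph of groups: an endomorphism $\tilde\sigma$ of $\pi(\Gamma,Z)$ is determined by a homomorphism $\Gamma_w\to\pi(\Gamma,Z)$ for each vertex $w$ and an image $\tau_e\in\pi(\Gamma,Z)$ for each edge $e$, subject to $\tau_e=1$ for $e\in Z$, $\tau_e\tau_{\overline e}=1$, and the conjugation relation attached to each $t_e$. We take $\tilde\sigma|_{\Gamma_v}=\sigma$; the hypothesis on the $c_e$ is precisely what makes this initial choice compatible with the relations at edges issuing from $v$, after which compatibility propagates through the rest of $\Gamma$.

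Root $Z$ at $v$ and, for each $w\neq v$, let $e_w$ denote the first edge of the unique path in $Z$ from $v$ to $w$ (so $\alpha(e_w)=v$). Define $d_w:=c_{e_w}$ and take $\tilde\sigma|_{\Gamma_w}$ to be the restriction to $\Gamma_w$ of conjugation by $d_w$ inside $\pi(\Gamma,Z)$. For $e\notin Z$ put $\gamma_e:=c_e$ if $\alpha(e)=v$ and $\gamma_e:=d_{\alpha(e)}$ otherwise, and set $\tau_e:=\gamma_{\overline e}\, t_e\, \gamma_e^{-1}$. The verification then splits into three cases. (i) For $e\in Z$ with $\alpha(e)=v$ and $\alpha(\overline e)=w$, compatibility reduces to $\sigma(g)=d_w\,i_e(g)\,d_w^{-1}$ for $g\in\Gamma_e$; the left side is $c_eg c_e^{-1}$ by hypothesis, while on the right $i_e(g)=g$ in $\pi(\Gamma,Z)$ and $d_w=c_e$, so both coincide. (ii) For $e\in Z$ with neither endpoint $v$, both endpoints share the same first edge from $v$, hence $d_{\alpha(e)}=d_{\alpha(\overline e)}$ and the identification $g=i_e(g)$ in $\pi(\Gamma,Z)$ is preserved by $\tilde\sigma$ automatically. (iii) For $e\notin Z$, a short telescoping calculation shows that the $\gamma$'s absorb the inner conjugations and reduce $\tau_e\tilde\sigma(g)\tau_e^{-1}=\tilde\sigma(i_e(g))$ to the defining relation $t_eg t_e^{-1}=i_e(g)$. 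Finally $\tau_e\tau_{\overline e}=1$ is immediate from the symmetric form of $\tau_e$.

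To upgrade $\tilde\sigma$ from an endomorphism to an automorphism we repeat the construction with $\sigma^{-1}$ in place of $\sigma$. Since $c_e$ normalises $\Gamma_e$ (as the conjugator of an automorphism of $\Gamma_e$), one has $\sigma^{-1}|_{\Gamma_e}=\iota_{c_e^{-1}}$, so the hypothesis still holds and produces an endomorphism $\widetilde{\sigma^{-1}}$; checking on the generating set used above shows that it is a two-sided inverse of $\tilde\sigma$. The main technical obstacle is purely bookkeeping: tracking which side of each edge each conjugating element lives on, and making the formulas for $\tau_e$ and $\tilde\sigma|_{\Gamma_w}$ interact correctly with all three kinds of defining relations. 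Everything beyond that is a direct application of the universal property.
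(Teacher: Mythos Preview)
Your proof is correct and follows essentially the same approach as the paper: extend $\sigma$ by conjugation on the other vertex groups using the conjugator $c_e$ attached to the first edge of the $Z$-path from $v$, adjust the stable letters accordingly, and obtain the inverse by running the same construction with $\sigma^{-1}$ and $c_e^{-1}$. Your write-up supplies more of the relation-checking detail than the paper's brief sketch, but the construction and the strategy for invertibility are identical.
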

    \begin{proof}
    	For $w\in V \setminus \{v\}$ let $c_{w}$ be $c_{e}$ where $e$ is the first edge of the unique simple path in $Z$ from $v$  to $w$. For $f\in E \setminus Z$ let $d_{f}$ be $c_{e}$ where $e$ is the first edge of a path $P$ in $Z$ starting at $v$ and ending at $\alpha(f)$. It is easy to check that the map sending $g\in G_{w}$ to $c_w g{c_{w}}^{-1}$ and $t_{f}$ to $d_{f}t_{f}d_{\edin{f}}^{-1}$ for $f\in E(\Gamma) \setminus E(Z)$ extends to an endomorphism of $\pi(\Gamma,Z)$ and that the endomorphism obtained from the same construction by replacing $\sigma$ by $\sigma^{-1}$ and $c_{e}$ by $c_{e}^{-1}$ is an inverse for it.
    \end{proof}

  \subsection{Floors and towers}
    
    \begin{definition} Given a graph of groups $\Gamma$, a \textbf{quadratically hanging group} of $\Gamma$ is a vertex group $\Gamma_{v}$ which is isomorphic to the fundamental group of a compact surface with boundary $\Sigma$, so that:
    \begin{itemize}
    	\item  $\chi(\Sigma) \leq -2$ or $\Sigma$ is a punctured torus;
    	\item there is a bijection $C$ between the sets of edges $e$ incident to $v$ and the set of boundary components of $\Sigma$
    	so that the image of $\Gamma_{e}$ in $\Gamma_{v}$ coincides with that of one of the mutually conjugate embeddings of $\pi_1(C(e))$ in $\pi_1(\Sigma)$.
    \end{itemize}
    \end{definition}
    
    \begin{definition}
    A \textbf{graph of groups with surfaces} $\Lambda$ is a graph of groups $\Lambda$ together with a distinguished set of vertices $V_S$ such that for any $v \in V_S$, the vertex group of $v$ is quadratically hanging. A vertex $v$ in $V_S$ is called a \textbf{surface type vertex} of $\Lambda$. We extend this denomination to vertices in the Bass-Serre tree associated to $\Gamma$ which are mapped to a vertex of $V_S$ by the quotient map.
    \end{definition}
   
    \begin{definition} An \textbf{abelian floor} is given by a tuple $(G,G',\Gamma,r)$ where $r$ is a retraction of a group $G$ onto a subgroup $G'$ and $\Gamma$ a graph of groups decomposition of $G$ with two vertices joined by a single edge, corresponding to a decomposition of $G$ of the form $G=G'*_{\mathbb{Z}}\mathbb{Z}^{2}$ where $\mathbb{Z}$ embeds onto a direct factor of $\mathbb{Z}^2$.
    \end{definition}

    \begin{definition}
    A \textbf{hyperbolic floor} relative to a group $H$ is given by a tuple $(G,G',\Gamma,r)$ where $H \leq G'\leq G$ are groups, and $\Gamma$ is a decomposition of $G$ as a graph of groups with surfaces such that:
    \begin{itemize}
    	\item $\Gamma$ is a bipartite graph between surface type vertices and non surface type vertices;
    	\item There exists a lift $T^0$ of a maximal subtree of $\Gamma$ such that if $v_1, \ldots, v_m$ are the non surface type vertices of $T^0$, then $G'= G_{v_1}* \ldots * G_{v_m}$ and $H \leq G_{v_1}$.
    	\item Either $r$ is a retraction of $G$ onto $G'$ sending every surface type vertex group to a non-abelian image or
    	$G'$ is cyclic and $r$ is a retraction from $G*\mathbb{Z}\to G'*\mathbb{Z}$ with the same property.
    \end{itemize}
    If $\Gamma$ has only one non surface type vertex, we say the floor is \textbf{connected}.
    \end{definition}

    \begin{definition} 	A \textbf{tower structure} $\mathcal{T}$ of a group $G$ over a subgroup $H$,
    consists of a finite sequence of groups $G=G_{0}\geq G_{1} \geq \cdots \geq G_{n} \geq H$ such that for each $i$, there exists $\Gamma_i$ and $r_i$ such that $(G_i, G_{i+1}, \Gamma_i, r_i)$ is either an abelian or a hyperbolic floor relative to $H$, and $G_n= H * \F_r * S_1 * \ldots * S_q$ where $\F_r$ is a free group of rank $r \geq 0$, and each $S_i$ is the fundamental group of a closed surface with Euler characteristic at most $-2$.
    A group is called a \textbf{tower} if it admits (at least one) tower structure.
    A tower structure is said to be \textbf{hyperbolic} if none of the floors are of abelian type.
    \end{definition}

    \begin{remark} \label{refinement}Let $(G, G',\Gamma, r)$ a hyperbolic floor relative to a subgroup $H$ of $G$ and 
    	$S_{1},S_{2},\dots, S_{s}$ the vertex groups of $\Gamma$. Then for every permutation $\sigma\in S_{s}$ it is possible to 
    	define a finite sequence $G=G_0 \geq G_1 \geq \ldots \geq G_s = G'$ and for each $i$ a hyperbolic floor $(G_{i-1}, G_{i}, \Gamma_i, r_i)$ relative to $H$ such that $\Gamma_i$ has a unique surface type vertex with stabilizer conjugate to $S_{\sigma(i)}$ and with the same peripheral structure.
    \end{remark}
    
    \begin{remark} \label{TowersLimitHYpTowersHyp}By Bestvina and Feighn's combination theorem, one can show that a hyperbolic tower over a hyperbolic group is itself hyperbolic (see \cite{BFCombinationTheorem}).
    
    It is a result of Sela that towers over a non trivial free group $\F(a)$ are limit groups relative to $\F(a)$ (see \cite{Sela6}). In particular they admit epimorphisms to $\F(a)$ which restrict to the identity on $a$.
    \end{remark}
    
    Towers appear in the following description provided by Sela in \cite{Sela6} of the finitely generated models of the theory of
    the free group.
    
    \begin{theorem} 
    A finitely generated group is a model of the theory of non abelian free groups if and only if
    it is non abelian and admits a hyperbolic tower structure over the trivial subgroup.
    \end{theorem}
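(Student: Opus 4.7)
The plan is to prove the two directions separately; the heavy lifting in both is provided by results developed in Sela's series, especially \cite{Sela6} and \cite{SelaStability}.

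For the ``if'' direction, fix a hyperbolic tower structure $G=G_0\geq G_1\geq\cdots\geq G_n\geq\{1\}$ with $G_n=\F_r\ast S_1\ast\cdots\ast S_q$. The central input is the \emph{hyperbolic floor theorem}: for each floor $(G_i,G_{i+1},\Gamma_i,r_i)$, the inclusion $G_{i+1}\hookrightarrow G_i$ is an elementary embedding. Its proof combines a Merzlyakov-style argument that lifts formal solutions via the retraction $r_i$ with test sequences built from high powers of Dehn twists supported on the quadratically hanging vertices (the non-abelian image condition in the definition of a hyperbolic floor is exactly what makes such test sequences nondegenerate), together with the reduction of arbitrary formulas over free groups to Boolean combinations of AE-formulas. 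Concatenating floors gives $G\succeq G_n$, so it remains to show $G_n\equiv\F_2$. Each closed surface group $S_i$ (Euler characteristic at most $-2$) admits a one-floor hyperbolic tower structure over a free group $\F_2\leq S_i$: cut $S_i$ along a separating simple closed curve into a one-holed torus and a one-holed surface of smaller genus, label the latter as the QH vertex and the former as the non-surface-type vertex, and write down a retraction explicitly using the surface relation; applying the floor theorem and inducting on genus yields $S_i\equiv\F_2$. Combining this with Tarski's theorem $\F_r\equiv\F_2$ and an analogous propagation through the free product (each partial product $\F_r\ast S_1\ast\cdots\ast S_k$ is realised as an extra hyperbolic floor over the previous one by inserting a suitable QH piece encoding $S_k$) gives $G\equiv\F_2$.

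For the ``only if'' direction, let $G$ be a finitely generated non-abelian model of the theory $T_{fg}$ of non-abelian free groups. From the universal theory of $\F_2$ one first obtains that $G$ embeds into an ultrapower of $\F_2$, so $G$ is fully residually free and in particular a limit group. The finer $\forall\exists$-content of $T_{fg}$ forces the existence, over a suitable Makanin--Razborov resolution of $G$, of families of formal solutions of exactly the shape a hyperbolic floor would provide; processing these via the Rips--Sela shortening argument applied to test sequences of homomorphisms $G\to\F_2$ extracts a nontrivial graph-of-groups decomposition $\Gamma$ of $G$ with surface-type vertices together with a genuine retraction $r\colon G\to G_1$ onto a proper subgroup, realising $(G,G_1,\Gamma,r)$ as a hyperbolic floor. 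One checks that $G_1$ is again a finitely generated non-abelian model and that a natural complexity (such as $\rk(G^{\mathrm{ab}})$ plus the total $|\chi|$ of the surfaces already extracted) strictly decreases, so the descent terminates with a group of the required free-product-with-surfaces form.

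The hardest step, by far, is the extraction of a hyperbolic floor structure from elementary equivalence in the ``only if'' direction: this is the combinatorial heart of Sela's solution to Tarski's problem, involving the interplay of the shortening argument, the JSJ analysis of limit groups, and careful control over the modular automorphisms supported on QH vertices. My plan treats it, along with the elementarity of hyperbolic floors needed in the ``if'' direction, as black boxes imported from \cite{Sela6} and \cite{SelaStability}.
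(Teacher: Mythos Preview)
The paper does not prove this theorem at all: it is stated as a result of Sela, attributed to \cite{Sela6}, and used as a black box. There is therefore no ``paper's own proof'' to compare your proposal against. Your outline is a reasonable high-level sketch of how Sela's argument proceeds, and you correctly identify that both directions ultimately rest on the machinery of \cite{Sela6} (elementarity of hyperbolic floors for the ``if'' direction; the shortening argument and JSJ analysis for the ``only if'' direction), which you explicitly import as black boxes. In that sense your proposal and the paper agree: neither attempts an independent proof, and both defer to Sela.

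One small caution on your ``if'' direction: the elementary-embedding theorem for hyperbolic floors (the theorem stated immediately after the one in question) requires the base $H$ to be non-abelian, so to chain the floors you need each intermediate $G_{i+1}$ to be non-abelian. This is fine once you have arranged for $G_n$ to be non-abelian, but in the borderline cases (e.g.\ $G_n$ trivial or cyclic, which the definition of hyperbolic floor accommodates via the ``$G'$ cyclic and $r:G\ast\mathbb{Z}\to G'\ast\mathbb{Z}$'' clause) a little extra care is needed. Your sketch glosses over this, though it does not affect the overall strategy.
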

    
    The following result also follows from the work of Sela:
    
    \begin{theorem}
    If a non abelian torsion free hyperbolic group $G$ admits the structure of a tower over a non abelian subgroup $H$, then the inclusion $H\hookrightarrow G$ is an elementary embedding.
    \end{theorem}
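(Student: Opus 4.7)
The natural strategy is induction on the number of floors of a tower structure of $G$ over $H$. Since $G$ is torsion-free hyperbolic, Remark \ref{TowersLimitHYpTowersHyp} (or rather the fact that $\mathbb{Z}^2$ is not contained in any hyperbolic group) forces any tower structure of $G$ over $H$ to consist only of hyperbolic floors. Writing a hyperbolic tower $G=G_0 \geq G_1 \geq \ldots \geq G_n$ with $G_n = H*\F_r*S_1*\ldots*S_q$, the plan is to establish the two following statements, from which the conclusion follows by composition of elementary embeddings:
\textbf{(a)} for each $i$, the inclusion $G_{i+1}\hookrightarrow G_i$ is elementary; and
\textbf{(b)} the inclusion $H\hookrightarrow G_n$ is elementary. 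Note that in each step $H$ remains non-abelian, which is crucial to keep alive the hypotheses needed below.

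For \textbf{(b)}, decompose $G_n$ as an iterated free product and deal with one factor at a time. The inclusion $H \hookrightarrow H * \F_r$ being elementary for $H$ non-abelian is a classical result (a non-abelian free factor is elementary), and each inclusion $H \hookrightarrow H*S_i$ fits into the framework of \textbf{(a)}, since $H*S_i$ carries a connected hyperbolic floor structure over $H$ (the bipartite graph has a single surface vertex carrying $S_i$ and a single non-surface vertex carrying $H$, with trivial edge group, and the retraction kills $S_i$; the hypothesis $\chi(S_i)\leq -2$ guarantees the surface is admissible).

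For \textbf{(a)}, one reduces to the case of a connected hyperbolic floor by Remark \ref{refinement}: any hyperbolic floor can be decomposed into a finite sequence of connected hyperbolic floors, and a finite composition of elementary embeddings is elementary. Thus we are reduced to showing: if $(G,G',\Gamma,r)$ is a \emph{connected} hyperbolic floor relative to $H$, with $H \leq G'$ non-abelian, then $G' \hookrightarrow G$ is elementary. Apply the Tarski--Vaught test: given a formula $\phi(x,\bar{y})$ and a tuple $\bar{g}$ from $G'$, assume $G\models \exists x\, \phi(x,\bar{g})$ and produce a witness in $G'$. The induction is carried out on quantifier complexity of $\phi$, the atomic case being trivial because $G'\hookrightarrow G$ is injective. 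For the inductive step, the retraction $r:G\to G'$ (or $r:G*\Z\to G'*\Z$ in the cyclic case) is the basic tool: given a witness $w\in G$, one would like to argue that $r(w)$ is also a witness.

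The main obstacle, and by far the deepest ingredient, is that $r$ is not elementary on its own: the truth of the inner quantified formula at $w$ needs to be transferred to $r(w)$. This is exactly the content of Sela's generalized Merzlyakov theorem, which produces \emph{formal solutions} for positive sentences that hold generically along test sequences coming from the modular group of the surface vertex groups. Combined with Sela's shortening argument applied to the JSJ decomposition implicit in the floor structure $\Gamma$, one shows that any formal solution provided by $r$ can be promoted to a genuine witness of the full formula. The non-abelianness of $H$, and more generally of $G'$, is exactly what makes the surface-group Makanin--Razborov analysis produce non-degenerate test sequences. Concretely, one feeds the witness $w$ together with the inductive hypothesis into the test-sequence machinery, obtains a formal solution in $G'$, and verifies that it satisfies $\phi(\cdot,\bar{g})$ in $G$ thanks to the fact that the inner formula has strictly lower quantifier complexity and by inductive hypothesis is preserved between $G'$ and $G$. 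This is the technical heart of Sela's Tarski paper and the step where the hyperbolicity of the floor (as opposed to abelian floors) is essential.
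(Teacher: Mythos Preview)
The paper does not prove this theorem: it is quoted as a consequence of Sela's work, with the converse attributed to \cite{PerinElementary}. So there is no ``paper's own proof'' to compare against; your proposal is an attempt to sketch the argument that the paper merely cites.

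Your overall architecture---induction on floors, reduction to a single floor, Tarski--Vaught combined with the retraction and Sela's formal-solution/test-sequence machinery---is indeed the shape of Sela's argument. However, two points in your treatment of the base case \textbf{(b)} are inaccurate. First, you say the retraction ``kills $S_i$''; this violates the definition of a hyperbolic floor, which requires the retraction to send each surface vertex group to a \emph{non-abelian} image. One needs instead a retraction $H*S_i\to H$ mapping $S_i$ onto a non-abelian subgroup of $H$ (possible since $S_i$ surjects onto a rank-$2$ free group and $H$ is non-abelian). Second, and more seriously, the $S_i$ appearing in $G_n$ are fundamental groups of \emph{closed} surfaces, whereas a quadratically hanging vertex group in a floor decomposition is by definition a surface \emph{with boundary}, with edge groups corresponding to boundary components. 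So $H*S_i$ does not carry a floor structure over $H$ in the naive way you describe (there are no boundary subgroups to serve as edge groups). One has to cut $S_i$ open along an essential simple closed curve to produce a genuine floor structure; this is routine but should be said. Similarly, the statement that $H\hookrightarrow H*\F_r$ is elementary for non-abelian $H$ is not ``classical'' outside the case where $H$ is free---for general torsion-free hyperbolic $H$ it is itself part of Sela's package and is proved by the same floor/test-sequence mechanism.

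Finally, your sketch of \textbf{(a)} gestures at the right ingredients (generalized Merzlyakov, test sequences, shortening) but is too compressed to count as a proof: the passage from ``$r(w)$ is a formal solution'' to ``there is a genuine witness in $G'$'' hides essentially all of the difficulty, and the induction on quantifier complexity does not by itself organize this---Sela's argument proceeds through a more elaborate analysis of $\forall\exists$-sentences and their ``completions''.
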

    In fact, the converse also holds by \cite{PerinElementary}.
    
    \subsection{Envelopes}
        
    As mentioned in the introduction, towers play yet another significant role in the works of Sela. Indeed, if $T$ is a tower and $V_T$ the associated variety, there is a natural notion of genericity of a sequence of points $(x_n)_{n \in \N}$ in $V_T$ (given by test sequences). 
    
    Sela proves in \cite{SelaImaginaries} that given any definable set $X \subseteq M^k$ in the theory of free groups, it is possible to find a finite set of towers whose generic sequences give sequences of elements which eventually must lie in $X$ - Sela calls such a set of towers an \textbf{envelope} of $X$. 
    
    More precisely, consider inclusions of towers of the form $T_1 \leq T_2 \leq T_3$, and adapted presentations of the form $\langle y_{T_i},x,a \mid \Sigma_{T_i}(y_{T_i},x,a)\rangle$ (where $y_{T_{i}}$ is a subtuple of $y_{T_{i+1}}$). Denote by $p(V_T)$ the projection of $V_T =\{(y_T,x) \mid \Sigma(y_T,x) \}$ on the variables $x$ given by the formula $\exists y_T \; \Sigma_T(y_T,x,a)$. Note that if $T_1 \leq T_2 \leq T_3$ we have $p(V_{T_3}) \subseteq p(V_{T_2}) \subseteq p(V_{T_1})$. 
    
    An envelope of a set $X$ defined by a formula on the variables $x$ is a finite height-2 tree of inclusions of towers with branches of the form $T \leq T' \leq T''$ as above such that
    \begin{enumerate}
    	\item for any $x \in X$, there exists a node $T$ of height $0$ in the tree such that $x$ lies in $p(V_{T})$ but not in $p(V_{T'})$ for any child $T'$ of $T$, or $x$ lies in fact in $p(V_{T''})$ for some grandchild $T''$ of $T$;
    	\item if for some node $T$ of height $0$ there exists a generic sequence $(y_{T}^n, x^n)$ in $V_{T}$ which does not extend to $V_{T'}$ for any of the children $T'$ of $T$, or if there exists a generic sequence $(y_{T''}^n, x^n)$ for $V_{T''}$ where $T''$ is a grandchild of $T$, then for all $n$ large enough $x^n$ lies in $X$.
    \end{enumerate}

There are reasons to hope that this notion of genericity is compatible with Shelah rank, in the sense that the Shelah rank of a superstable definable set can be computed in terms of the Shelah rank of the towers in one of its envelopes. For example, in the special case where the envelope of $X$ consists of a single tower $T$, the conjecture is that $\SR(X) = \SR(V_T)$. 
   
   For this reason computing Shelah rank of tower varieties seems like an essential first step in the understanding of Shelah rank for general definable sets. 
     
  \subsection{Minimal rank tower varieties}

    \begin{definition} Let $T$ be a (hyperbolic) tower over the free group $\F=\F(a)$.
    A \textbf{(hyperbolic) tower variety} associated to $T$ is a definable set $V^T$ of the form $\{ x \in \F \mid \Sigma_T(x, a)\}$ where $\langle x, a \mid \Sigma_T(x,a) \rangle$ is a presentation for $T$.
    \end{definition}

    \begin{lemma} \label{MinRankTowerProperties}  Let $T$ be a tower over $\F = \F(a)$. If $T$ has minimal rank, then the following hold for any floor $(T_i, T_{i+1}, \Gamma_i, r_i)$ of any tower structure for $T$ over $\F$:
    	\begin{enumerate}
    		\item the floor is connected;
    		\item $T_{i+1}$ is freely indecomposable relative to $\F(a)$;
    		\item if the floor is hyperbolic,  then $\Ker(r_i)$ does not contain any element corresponding to a simple closed curve on a surface of $\Gamma_i$.
    	\end{enumerate} 
    \end{lemma}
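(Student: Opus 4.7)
The plan is to prove each of (1), (2), (3) by contrapositive, using the equivalent characterization of minimal rank given in Remark~\ref{EquivalentDefMinimalRank}: assuming the conclusion fails, I will construct a morphism $\pi\colon T\to\F(a)$ which is the identity on $a$ and factors through a non-trivial free product $A*B$ with $a\in A$ and $B$ admitting a non-trivial morphism to a free group. Two tools will recur: composing the floor retractions yields a retraction $T\to T_{i+1}$ that is the identity on $\F(a)$; and since $T_{i+1}$ inherits a tower structure over $\F(a)$ from the one on $T$, Remark~\ref{TowersLimitHYpTowersHyp} supplies an epimorphism $\epsilon\colon T_{i+1}\to\F(a)$ restricting to the identity on $a$. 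Moreover, subgroups of the limit group $T$ are themselves limit groups, so any non-trivial such subgroup admits a non-trivial morphism to a free group.

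For (1), I will assume that the hyperbolic floor has $m\geq 2$ non-surface-type vertices. By definition $T_{i+1}=G_{v_1}*\cdots*G_{v_m}$ with $\F(a)\leq G_{v_1}$, so setting $A=G_{v_1}$ and $B=G_{v_2}*\cdots*G_{v_m}$ the composition $T\to T_{i+1}=A*B\xrightarrow{\epsilon}\F(a)$ provides the required factorization. The same argument will handle (2): if $T_{i+1}=A*B$ is a non-trivial free decomposition relative to $\F(a)$, the composition $T\to T_{i+1}=A*B\xrightarrow{\epsilon}\F(a)$ works directly, since $B$ is then a non-trivial limit group.

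For (3), let $c$ be a simple closed curve on a surface $\Sigma$ of $\Gamma_i$ with $r_i(c)=1$. Refining $\Gamma_i$ along the splitting of $\pi_1(\Sigma)$ induced by $c$ yields a graph of groups $\Gamma_i^c$ for $T_i$ in which $\langle c\rangle$ is an edge group. I plan to produce a new morphism $\tilde r\colon T_i\to T_{i+1}*\langle s\rangle$ which is the identity on $T_{i+1}$ and has $s$ in its image, obtained by inserting $s$ as the stable letter associated to the $c$-edge. In the non-separating case $c$ corresponds to a loop in $\Gamma_i^c$, hence to a non-tree edge with stable letter $t$ subject to the relation $t\cdot c_1\cdot t^{-1}=c_2$ in $T_i$ (where $c_1,c_2$ are the two boundary curves obtained by cutting $\Sigma$ along $c$); setting $\tilde r(t)=s\cdot r_i(t)$ and $\tilde r=r_i$ on every remaining generator preserves this relation since $r_i(c_1)=r_i(c_2)=1$. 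In the separating case, after invoking (1) to reduce to a connected floor with unique non-surface vertex $v$, one chooses a spanning tree of $\Gamma_i^c$ containing the $c$-edge together with a boundary edge from $u_1$ to $v$; defining $\tilde r$ as $r_i$ on $\pi_1(\Sigma_1)$, as $s\cdot r_i(\cdot)\cdot s^{-1}$ on $\pi_1(\Sigma_2)$, and as $s$ on the stable letters of the non-tree $u_2$-to-$v$ edges yields a well-defined morphism. The degenerate subcase where all boundaries of $\Sigma$ lie on the same side of $c$ is treated separately by observing that $r_i$ then factors through $T_i\to T_{i+1}*\pi_1(\hat\Sigma_2)$, where $\hat\Sigma_2$ is the closed surface obtained by capping $c$, whose fundamental group is a non-trivial limit group. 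In every case, composing $\tilde r$ with the retraction $T\to T_i$ and with $\epsilon$, then killing $s$ (or the $\pi_1(\hat\Sigma_2)$-factor), produces the factorization required by Remark~\ref{EquivalentDefMinimalRank}.

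The main obstacle lies in the separating case of (3), where the construction has to be adapted to how the boundary components of $\Sigma$ distribute between the two pieces and to how the corresponding boundary edges of $\Gamma_i^c$ attach to the non-surface vertex; the spanning tree of $\Gamma_i^c$ and the values of $\tilde r$ on the stable letters must be chosen carefully for $\tilde r$ to be a well-defined morphism in which $s$ genuinely appears in the image.
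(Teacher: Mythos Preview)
Your treatment of (1) and (2) matches the paper's proof exactly. For (3), however, the paper takes a much shorter route that avoids your case analysis entirely: since $r_i(c)=1$, the retraction $T\to\F(a)$ factors through $T\to T_i\to T_i/\langle\!\langle c\rangle\!\rangle$. In the refined graph of groups $\Gamma_i^c$ the element $c$ generates an edge group, so killing it trivializes that edge group; the resulting quotient is therefore automatically a nontrivial free product (the edge either becomes a loop with trivial edge group, contributing a $\ast\,\Z$ factor, or it disconnects the graph, splitting off a surface-group factor). Remark~\ref{EquivalentDefMinimalRank} then applies immediately, with no need to distinguish separating from non-separating curves or to track where the boundary components of $\Sigma$ land. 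Your explicit construction of $\tilde r\colon T_i\to T_{i+1}\ast\langle s\rangle$ is a valid alternative---it amounts to composing the paper's quotient map with a further retraction---but it buys nothing except extra bookkeeping.

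On that bookkeeping: in your non-degenerate separating case the assignment $\tilde r(t_j)=s$ on the $u_2$-to-$v$ stable letters does not respect the edge relations. With the convention $t_j\beta_v t_j^{-1}=\beta_{\Sigma_2}$, applying your $\tilde r$ to both sides forces $s\beta_v s^{-1}=s\,r_i(\beta_{\Sigma_2})\,s^{-1}$, i.e.\ $\beta_v=r_i(\beta_{\Sigma_2})$, which would require $r_i(t_j)$ to centralize $\beta_v$. The correct value is $\tilde r(t_j)=s\cdot r_i(t_j)$ (the same twist you already used in the non-separating case); with that change the relations go through and $s$ is still in the image. This is exactly the kind of fiddliness you flagged as the main obstacle, and it is precisely what the paper's quotient argument sidesteps.
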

    
    \begin{proof} The retraction $T \to \F(a)$ factors through the retraction of $T$ onto $T_{i+1}$.  If one of the first two hypotheses does not hold, $T_{i+1}$ is a free product $A*B$ of limit groups relative to $A$ in which $a$ is contained in the first factor. 
    	
   If the third hypothesis does not hold, the retraction $T \to \F(a)$ factors through the composition of the retraction of $T$ on $T_i$ with the quotient map of $T_i$ by the normal subgroup generated by the element of $\Ker(r_i)$ corresponding to a simple closed curve. Again, this is a free product as above.
    	
   By Remark \ref{EquivalentDefMinimalRank}, this implies that the tower does not have minimal rank, a contradiction.
    \end{proof}

    As a consequence, minimal rank towers have the following nice property
    \begin{lemma} \label{MinimalRankOnlyOneFloorStructure} Let $T$ be a hyperbolic tower over $\F(a)$ of minimal rank, with top floor $(T, T_0, \Gamma_0, r_0)$ such that $\Gamma_0$ has only one surface. Then $T$ does not admit a free product decomposition for which $T_0$ is contained in one of the factors, nor does it admit a floor structure of the form $(T,T_1, \Gamma_1, r_1)$ for any subgroup $T_1$ which properly contains $T_0$.
    \end{lemma}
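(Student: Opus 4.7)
For assertion (1), suppose for contradiction that $T = A \ast B$ is a non-trivial free product with $T_0 \leq A$. By Remark \ref{TowersLimitHYpTowersHyp}, $T$ is a limit group relative to $\F(a)$, so there is an epimorphism $\pi : T \to \F(a)$ restricting to the identity on $a$. Since $\F(a) \leq T_0 \leq A$, the image of $a$ lies in $A$, and $\pi$ factors tautologically through the free product $A \ast B$. As a finitely generated free factor of the limit group $T$, the group $B$ is itself a finitely generated limit group, hence non-trivially surjects onto a free group. Remark \ref{EquivalentDefMinimalRank} then contradicts the minimal rank hypothesis on $T$.

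For assertion (2), suppose towards a contradiction that $(T, T_1, \Gamma_1, r_1)$ is a floor with $T_0 \subsetneq T_1$. Let $X_1$ be the Bass-Serre tree of $\Gamma_1$ and $v_1$ the vertex whose stabiliser is $T_1$. I plan to analyse the action on $X_1$ of the surface vertex group $S = \pi_1(\Sigma_0)$ of the top floor $\Gamma_0$. The boundary subgroups of $\Sigma_0$ are cyclic subgroups of $T_0 \leq T_1$, so they fix $v_1$. If $S$ itself fixes $v_1$, then $S \leq T_1$, and since $T$ is generated by $T_0$ and $S$ through the amalgamation $\Gamma_0$, we obtain $T = T_1$, which is impossible as $\Gamma_1$ contains at least one surface-type vertex. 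If $S$ fixes a surface-type vertex $g S_j g^{-1}$ of $X_1$, then $S$ sits inside a conjugate of one of the $\Gamma_1$-surface groups; matching peripheral structures on both sides should then force the underlying surfaces to coincide and $T_1 = T_0$, again a contradiction.

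The delicate case, and the main obstacle, is when $S$ acts without a global fixed vertex on $X_1$. Then $S$ splits non-trivially as a graph of groups dual to this action, with cyclic edge stabilisers and with all boundary subgroups elliptic; by the standard characterisation of surface-group splittings with peripheral structure elliptic, this splitting is dual to a non-empty multicurve on $\Sigma_0$. Each of the corresponding simple closed curves generates a cyclic subgroup of $S$ conjugate in $T$ to a cyclic boundary subgroup of some $\Gamma_1$-surface, hence conjugate into $T_1$. The plan is then to play this against Lemma \ref{MinRankTowerProperties}(3), which forbids simple closed curve elements on $\Sigma_0$ from lying in $\Ker(r_0)$: a careful comparison of the retractions $r_0: T \to T_0$ and $r_1: T \to T_1$ --- noting in particular that $r_0$ restricts to a retraction $T_1 \to T_0$ with non-trivial kernel because $T_0 \subsetneq T_1$ --- should yield a non-trivial free decomposition of $T$ containing $T_0$ in one factor, contradicting part (1) just established.
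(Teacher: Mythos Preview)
Your argument for assertion (1) is correct and matches the paper's proof.

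For assertion (2), your setup via the action of $S=\pi_1(\Sigma_0)$ on the Bass--Serre tree $X_1$ is closely related to the paper's approach: the paper observes that $\Gamma_0$ is the cyclic JSJ of $T$ relative to $T_0$, so $\Gamma_1$ is obtained from $\Gamma_0$ by refining at the surface vertex (along a multicurve on $\Sigma_0$) and then collapsing the old edges. Your Case~3 produces exactly this multicurve. However, two points need attention.

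First, in Case~1 you assert that $T$ is generated by $T_0$ and $S$. This is only true when $\Gamma_0$ is a single amalgam; if $\Sigma_0$ has several boundary components there are stable letters, and $S,T_0\leq T_1$ does not by itself force $T=T_1$. This is fixable, but not as written.

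Second, and more seriously, your endgame in Case~3 does not work. You propose to combine Lemma~\ref{MinRankTowerProperties}(3) (no simple closed curve of $\Sigma_0$ lies in $\Ker r_0$) with the observation that $r_0|_{T_1}$ has non-trivial kernel, and then ``yield a non-trivial free decomposition of $T$''. But neither ingredient produces such a decomposition: the curves in your multicurve are conjugate into $T_1$ and are \emph{not} killed by $r_0$, so there is no contradiction with Lemma~\ref{MinRankTowerProperties}(3); and a proper retraction $T_1\to T_0$ says nothing about free splittings of $T$. The paper instead argues directly about the structure of $T_1$: in the refined graph $\hat{\Gamma}_0$, the non-surface vertex group $T_1$ of $\Gamma_1$ is the fundamental group of a subgraph $\hat{\Gamma}'_0$ that contains $T_0$ together with at least one subsurface piece $\Pi_1$ of $\Sigma_0$, and at least one boundary component of $\Pi_1$ is \emph{not} an edge of $\hat{\Gamma}'_0$ (it leads to a surface vertex of $\Gamma_1$). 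A surface-with-boundary vertex with an unattached boundary component contributes a free factor, so $T_1\cong T_0*F$ with $F$ free and non-trivial. The retraction $r_1:T\to T_1$ then contradicts minimal rank via Remark~\ref{EquivalentDefMinimalRank}. Your multicurve is exactly what is needed to see this decomposition of $T_1$; the missing step is this graph-of-groups analysis of $T_1$, not an appeal to the retractions $r_0$, $r_1$ or to Lemma~\ref{MinRankTowerProperties}(3).
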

    This will help us in Section \ref{DefOfOrbitsSec} to find plenty of definable subsets, which will enable us to give a lower bound on the rank.
    
    \begin{proof} Suppose first that there is a non trivial decomposition $T = A*B$ with $T_0 \leq A$. Now $T$ is a limit group over $\F(a)$ so this contradicts minimal rank.
    
    Suppose now that $T$ admit a floor structure of the form $(T,T_1, \Gamma_1, r_1)$ for some subgroup $T_1$ which properly contains $T_0$. Denote by $\Sigma_0$ the (single) surface of the floor decomposition $\Gamma_0$.
    
    First, note that $\Gamma_0$ and $\Gamma_1$ have exactly one non surface type vertex by Lemma \ref{MinRankTowerProperties}, which thus must have vertex groups $T_0, T_1$ respectively. We see that $\Gamma_0$ is also the JSJ decomposition of $T$ relative to $T_0$ (for example by applying Lemma 5.3 in \cite{GuirardelLevittJSJI}). Thus $\Gamma_1$ can be obtained from $\Gamma_0$ by refining at the surface type vertex into a graph $\hat{\Gamma}_0$ and then collapsing all the edges except those created in the refinement. 
    
 Now $T_1$ is the unique non surface type vertex group of $\Gamma_1$, thus it is the fundamental group of a subgraph of groups $\hat{\Gamma}'_0$ of $\hat{\Gamma}_0$ which contains at least one vertex group $H$ coming from a piece $\Pi_1$ of $\Sigma_0$. This group $H$ is the fundamental group of the surface group with boundary $\Pi_1$, but the conjugacy class of the boundary subgroup not adjacent to the vertex corresponding to $T_0$ does not correspond to an edge group in $\hat{\Gamma}'_0$. This shows that $T_1$ can be written as a free product of $T_0$ together with a non trivial free group - this contradicts the minimal rank assumption since the retraction $r_1$ gives a morphism $T \to T_1$ which fixes $\F(a)$.
    \end{proof}

  \subsection{Statement of the main result}
    
    We can now state the main results of this paper.
    \begin{theorem} \label{MainResultOneFloor}
    Let $T$ be a hyperbolic tower over $\F(a)$ with one floor, with a single surface $\Sigma$ in the associated floor decomposition. Denote by $V^T$ the associated variety.
    Then $R_{\F(a)}^{\infty}(V^T) \geq \omega^{-\chi(\Sigma)}$.
    \end{theorem}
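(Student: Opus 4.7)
The plan is to proceed by induction on $-\chi(\Sigma)$. By Proposition \ref{SuperstableDoesNotExpand}, we may assume from the start that $T$ has minimal rank over $\F(a)$, for otherwise $R^\infty_{\F(a)}(V^T) = \infty$ and there is nothing to show. Under this hypothesis, Lemmas \ref{MinRankTowerProperties} and \ref{MinimalRankOnlyOneFloorStructure} pin down the geometry of $T$: the floor is connected, so $T$ splits as an amalgam of a single non-surface vertex group $T_0 \supseteq \F(a)$ with $\pi_1(\Sigma)$ along the peripheral subgroups, and no alternative refinement of $T$ is compatible with this floor structure.

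The rank-boosting engine is Lemma \ref{automorphisms and rank}. For any essential simple closed curve $\gamma \subset \Sigma$, the Dehn twist $\tau_\gamma$ acts, up to conjugation, trivially on each boundary subgroup of $\pi_1(\Sigma)$, so by Lemma \ref{VertexAutomorphisms} it lifts to an automorphism of $T$ fixing $T_0$ pointwise. Since the inclusion $\F(a) \hookrightarrow T$ is elementary by Sela's theorem (Remark \ref{TowersLimitHYpTowersHyp}), these lift further to automorphisms of $\mathbb{M}$ fixing $\F(a)$ pointwise, acting on $V^T$ setwise. The definable subsets to which Lemma \ref{automorphisms and rank} is applied will be built from orbits of subgroups of this mapping class group action, as developed in the subsequent section on definable orbits.

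For the base case $-\chi(\Sigma)=1$ (the punctured torus) we aim to produce, for each $n$, a definable subset $Y_n \subsetneq V^T$ together with infinitely many automorphisms $\sigma_k \in \Aut_{\F(a)}(\mathbb{M})$ preserving $V^T$ setwise whose translates $\sigma_k(Y_n)$ are pairwise disjoint and which satisfy $R^\infty_{\F(a)}(Y_n) \geq n-1$; iterated use of Lemma \ref{automorphisms and rank} then gives $R^\infty_{\F(a)}(V^T) \geq \omega$. In the inductive step we select a simple closed curve $\gamma \subset \Sigma$ so that cutting along $\gamma$ exposes a subsurface $\Sigma'$ with $-\chi(\Sigma') < -\chi(\Sigma)$ still satisfying the hypotheses of the definition of a quadratically hanging vertex, and use the retraction $r \colon T \to T_0$ to realize the subtower variety $V^{T'}$, where $T' = \langle T_0, \pi_1(\Sigma') \rangle$, as a definable subset of $V^T$. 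The inductive hypothesis gives $R^\infty_{\F(a)}(V^{T'}) \geq \omega^{-\chi(\Sigma')}$, and Dehn twists along curves transverse to $\gamma$ supply enough pairwise-disjoint automorphism translates of $V^{T'}$ inside $V^T$ to build, for each integer $m$, a definable subset of $V^T$ of rank at least $\omega^{-\chi(\Sigma)-1} \cdot m$, yielding $R^\infty_{\F(a)}(V^T) \geq \omega^{-\chi(\Sigma)}$.

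The main obstacle lies precisely in this last amplification step: Lemma \ref{automorphisms and rank} only increments rank by one per application, so to climb from $\omega^{-\chi(\Sigma)-1}$ up to the limit ordinal $\omega^{-\chi(\Sigma)}$ one must iterate the construction countably many times, keeping nested families of translates pairwise disjoint at every layer while preserving all outer levels setwise. Here the combinatorial geometry of the curve complex of $\Sigma$ becomes crucial, as in \cite{PerinSklinosForking}: it provides systems of curves whose Dehn twists commute where needed but act independently elsewhere, which is exactly the data required to organise the nested automorphism orbits.
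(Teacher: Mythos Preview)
Your outline correctly identifies the key ingredients---the minimal-rank reduction, the mapping class group action on $T$, and the role of the curve complex---but the inductive scheme via a subtower $T' = \langle T_0, \pi_1(\Sigma')\rangle$ does not work as stated. When $\Sigma'$ is a proper subsurface of $\Sigma$, at least one boundary component of $\Sigma'$ lies in the interior of $\Sigma$ and hence is not attached to $T_0$; consequently $\pi_1(\Sigma')$ fails the quadratically hanging condition in the natural graph-of-groups decomposition of $T'$, and $T'$ is not a one-floor hyperbolic tower over $\F(a)$ with surface $\Sigma'$. (In fact the free boundary forces $T'$ to split off a nontrivial free factor relative to $T_0$, so whatever tower structure $T'$ carries has no surface floor at all.) Thus there is no object to which your inductive hypothesis applies. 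Even granting a suitable $T'$, the variety $V^{T'}$ lives in strictly fewer variables than $V^T$, and the retraction $r\colon T \to T_0$ does not supply any definable embedding of the former into the latter.

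The paper circumvents this by never leaving $T$. It fixes once and for all a chain of subsurfaces $\Sigma_0 \subsetneq \cdots \subsetneq \Sigma_m = \Sigma$ with $-\chi(\Sigma_i)=i$ (Lemma~\ref{SequenceFreeProducts}) and takes as definable sets the orbits $H_{\Sigma_i}\cdot v$ of a fixed generating tuple $v$ of $T$ under the subgroups $H_{\Sigma_i}\leq \Aut_{T_0}(T)$; definability of these orbits over $v$ is exactly Proposition~\ref{DefinabilityOfOrbits}, and this is where minimal rank, via Lemma~\ref{MinimalRankOnlyOneFloorStructure}, is actually used. The curve-complex step you anticipate is Proposition~\ref{FreeProduct}: for each $i$ there is $\tau_i \in H_{\Sigma_{i+1}}$ with $\langle H_{\Sigma_i}, \tau_i\rangle \cong H_{\Sigma_i} * \langle \tau_i\rangle$, established via the WPD property of pseudo-Anosovs and a ping-pong argument on $\partial\mathcal{C}(\Sigma_{i+1})$---mere transversality of Dehn twists does not suffice. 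This free-product structure then feeds the witnessing-sequence machinery (Definition~\ref{WitnessingSequenceDef}, Lemma~\ref{rank times omega}, Corollary~\ref{rank times omega iterated}), which is precisely the device that handles the limit-ordinal iteration you flagged: each free product multiplies the length of the witnessing sequence by~$\omega$, giving length $\omega^m$ after $m$ steps, and Lemma~\ref{WitnessingSequence} with $X=\{v\}$ delivers the bound.
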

    
    More generally we get
    \begin{theorem} 
    \label{MainResultSeveralFloors} Let $T$ be a hyperbolic tower over $\F(a)$ with tower structure $T=T_m \geq T_{m-1} \geq \ldots \geq T_1 \geq T_0=\F(a)$, and suppose that for each floor $(T_i, T_{i+1}, \Gamma_i, r_i)$, the floor decomposition $\Gamma_i$ has a unique surface $\Sigma_i$. Denote by $V^T$ the associated variety.
    
    Then $R_{\F(a)}^{\infty}(V^T) \geq \omega^{-\chi(\Sigma_m)} + \ldots + \omega^{-\chi(\Sigma_1)}$.
    \end{theorem}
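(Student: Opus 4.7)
The plan is to prove the theorem by induction on the number of floors $m$, with base case $m=1$ provided by Theorem \ref{MainResultOneFloor}. For the inductive step, I would invoke the result on the sub-tower $T_{m-1}$, whose tower structure has $m-1$ floors with surfaces $\Sigma_{m-1}, \dots, \Sigma_1$, obtaining
$$\SR_{\F(a)}(V^{T_{m-1}}) \geq \alpha := \omega^{-\chi(\Sigma_{m-1})} + \cdots + \omega^{-\chi(\Sigma_1)}.$$
By the recursive definition of Shelah rank, this inequality is witnessed by a tree of definable subsets of $V^{T_{m-1}}$ whose branches are descending chains of total ordinal depth $\alpha$, each step forking over the parameters of the previous one.

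The main step is to transport this witness tree into $V^T$ via the definable forgetful projection $p : V^T \to V^{T_{m-1}}$ that keeps only the coordinates corresponding to generators of $T_{m-1}$ inside those of $T$. Pulling every node $U$ of the tree back to $W := p^{-1}(U) \cap V^T$ produces a witness tree in $V^T$ of the same combinatorial shape: by the automorphism-based reformulation of forking from Section 2, if $\sigma_n \in \Aut_{\F(a)}(\mathbb{M})$ witness $k$-wise disjointness of translates of some $U' \subset U$ in $V^{T_{m-1}}$, then the same $\sigma_n$ produce $k$-wise disjoint translates of $p^{-1}(U') \subset p^{-1}(U)$ in $V^T$. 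This alone yields the weaker bound $\SR_{\F(a)}(V^T) \geq \alpha$.

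To recover the extra $\omega^{-\chi(\Sigma_m)}$ on the left of the ordinal sum, I would run the proof of Theorem \ref{MainResultOneFloor} \emph{inside} each pulled-back piece $W$, exploiting the top floor of $T$. That proof constructs, from Dehn twists along simple closed curves of $\Sigma_m$ extended to the entire tower via Lemma \ref{VertexAutomorphisms}, a hierarchy of automorphisms of $T$ whose combinatorics --- governed by the curve complex of $\Sigma_m$ --- yields rank $\omega^{-\chi(\Sigma_m)}$ through iterated application of Lemma \ref{automorphisms and rank}. The point is that these Dehn-twist extensions restrict to conjugations on $T_{m-1}$, so they descend to the identity action on $V^{T_{m-1}}$ modulo conjugation in $\F$, and hence preserve every $W$ setwise. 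Running the top-floor construction inside $W$ thus gives $\SR_{\F(a)}(W) \geq \omega^{-\chi(\Sigma_m)}$, and combining this inner rank with the length-$\alpha$ outer tree produces $\SR_{\F(a)}(V^T) \geq \omega^{-\chi(\Sigma_m)} + \alpha$, as required.

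The hardest step will be the parameter bookkeeping required to fuse the two layers of the construction. The inner forking witnesses from the top-floor curve complex must operate over parameter sets defining the $W$'s inside $V^T$, while the outer witnesses from the inductive construction on $V^{T_{m-1}}$ must remain valid when composed with the inner ones. A related subtlety is that the Dehn-twist extensions from Lemma \ref{VertexAutomorphisms} act as the identity on $V^{T_{m-1}}$ only up to inner automorphisms, so one must check that the outer witness tree and its witnessing automorphisms can be chosen to respect this --- or, equivalently, that quotienting by inner automorphisms does not spoil the rank estimate. Once these compatibilities are established, transitivity of the ordinal-sum characterization of Shelah rank yields the bound of the theorem.
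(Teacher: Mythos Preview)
Your induction runs in the opposite direction from the paper's, and this reversal creates a genuine gap. The paper peels off the \emph{bottom} floor: it strengthens the induction hypothesis to concern the relative rank $\SR_{T'}(V^{T}_{T'})$ for $T$ a tower over an arbitrary base tower $T'$, applies it to the same $T=T_n$ viewed as an $(n-1)$-floor tower over the enlarged base $T_1$, and then uses the witnessing sequence in $\Aut_{T_0}(T_1)$ (Proposition~\ref{OneExtraFloor}) together with Lemma~\ref{WitnessingSequence} to add $\omega^{-\chi(\Sigma_1)}$ on the right. The point is that the single fiber $X_{(s_1,s_0,a)}$ sits at the bottom of the construction and carries the full inductive rank, and the bottom-floor automorphisms move that specific fiber around inside $V^{T_n}_{T_0}$.

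Your proposal peels off the \emph{top} floor instead: the outer tree comes from the inductive hypothesis on $V^{T_{m-1}}$, and you want to insert the top-floor contribution at the leaves. The difficulty is not the parameter bookkeeping you flag, but something more concrete: the top-floor witnessing sequence from Proposition~\ref{OneExtraFloor} is built from automorphisms in $\Aut_{T_{m-1}}(T)$ acting on the generating tuple $(t,s)$, so the entire construction lives inside the single fiber $p^{-1}(\{s\})$ over the canonical point $s\in V^{T_{m-1}}$. For your fusion to give $\omega^{-\chi(\Sigma_m)}+\alpha$, every leaf $W=p^{-1}(U)$ of the pulled-back tree must satisfy $\SR_{\F(a)}(W)\geq\omega^{-\chi(\Sigma_m)}$; but the sets $U$ arising at the leaves of an abstract witness for $\SR_{\F(a)}(V^{T_{m-1}})\geq\alpha$ are arbitrary definable subsets of $V^{T_{m-1}}(\mathbb{M})$ and need not contain $s$, so there is no reason $p^{-1}(\{s\})\subset W$. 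You would need to know that \emph{every} fiber $p^{-1}(\{u\})$, for $u$ an arbitrary point of $V^{T_{m-1}}(\mathbb{M})$, has rank at least $\omega^{-\chi(\Sigma_m)}$, and Corollary~\ref{OneFloorFiber} only gives this for the distinguished $u=s$. (Incidentally, the extensions of $H_{\Sigma_m}$ to $T$ restrict to the identity on $T_{m-1}$, not merely to conjugation; this makes things cleaner but does not rescue the argument.) The paper's direction of induction is chosen precisely so that the single fiber carrying the inductive rank is the \emph{starting set} $X$ of Lemma~\ref{WitnessingSequence}, which sidesteps the need to control arbitrary leaves.
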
    
    
    Note that a given tower group $T$ may admit several tower structures (with one surface per floor), and these may give different lower bounds. 
    Using the fact that we have $\alpha+\beta=\beta\neq\beta+\alpha$ for any two ordinals $\alpha<\beta$, Remark \ref{refinement} can be used to provide many examples in which this is the case. 
    
\section{Definability of orbits} \label{DefOfOrbitsSec}
  
  Let $T$ be a hyperbolic tower over $\F(a)$. Witnessing the lower bound given in Theorem \ref{MainResultOneFloor} and \ref{MainResultSeveralFloors} requires finding a host of definable subsets of the variety $V^T$. We want to apply the condition on automorphisms of small models given by Lemma \ref{automorphisms and rank} to compute lower bounds on the Shelah rank. 
  
  We can in fact take as small model the hyperbolic tower $T$ itself, since non abelian hyperbolic towers over the trivial group are models of the theory of free groups. 
  
   Orbits of a generating tuple of $T$ by subgroups of $Aut_{\F}(T)$ are a natural candidate, since they clearly lie in $V^T$, and behave well with respect to the action of the group $Aut_{\F}(G)$. On the other hand, it is not a priori obvious that they are definable. Here is where the minimal rank condition comes into play.

  \begin{definition} \label{HSigmaDef}
  	  Let $\Sigma$ be a surface with boundary, and pick $B$ a boundary subgroup of $\pi_1(\Sigma)$. We denote by $H_{\Sigma, B}$ the group of automorphisms $\pi_1(\Sigma)$ which restrict to the identity on $B$ and to a conjugation on any other boundary subgroups. 
   \end{definition} 	  
We sometimes simply denote $H_{\Sigma, B}$ by $H_{\Sigma}$ when the choice of $B$ is clear from the context.

\begin{remark} If $\Lambda$ is a graph of group with a surface type vertex $v$ corresponding to $\Sigma$, any automorphism in $H_{\Sigma}$ extends to an automorphism of $\pi_1(\Lambda)$ which restricts to the identity on the vertex group adjacent to $v$ by the edge corresponding to $B$.
\end{remark}
  	  
\begin{ex} \label{EmbeddingsHSigma}
	\begin{enumerate}
			\item  Suppose $\Sigma, B$ are as above, and let $\Sigma_{0}$ be a $\pi_1$- embedded subsurface of $\Sigma$. If $\Sigma_0$ contains the boundary component corresponding to $B$, let $B=B_0$. Otherwise, let $\Delta_{\Sigma_0}$ be the graph of groups decomposition of $\pi_1(\Sigma)$ dual to the boundary components of $\Sigma_0$. Pick a pair of adjacent vertex groups $S_0\simeq \pi_1(\Sigma_0)$ and $U$ corresponding respectively to $\Sigma_0$ and to a connected component of its complement in such a way that $B \leq U$, and define $B_0 = S_0 \cap U$. Then any element of $H_{\Sigma_0, B_0}$ extends to an element of $H_{\Sigma, B}$, this gives an embedding $H_{\Sigma_0, B_0} \leq H_{\Sigma,B}$.
			
			\item  If $(T, T_0, \Gamma, r)$ is a \textbf{connected} hyperbolic floor such that $\Gamma$ has a unique surface $\Sigma$, and if $\Sigma_{0}$ is a $\pi_1$- embedded subsurface of $\Sigma$, we consider the graph of groups obtained by refining the vertex corresponding to $\Sigma$ by $\Delta_{\Sigma_0}$ and then collapsing all the edges which are not adjacent to the vertex corresponding to $\Sigma_0$. The two vertices of this new graph $\Gamma_{\Sigma_0}$ each define a conjugacy class of subgroups of $T$, we pick representatives corresponding to subgroups $A_{\Sigma_0}$ and $\pi_1(\Sigma_0)$ whose intersection is a boundary subgroup $B_0$ of $\pi_1(\Sigma_0)$. Any element of $H_{\Sigma_0, B_0}$ extends to an automorphism of $T$ which restricts to the identity on $A_{\Sigma_0}$ thus we get an embedding $H_{\Sigma_0} \leq \Aut_{A_{\Sigma_0}}(T)$. 
	\end{enumerate}
\end{ex}
			
			\begin{lemma} Suppose we are in the setting of Example \ref{EmbeddingsHSigma}(2) above, and that $B_0$ is maximal cyclic in $A_{\Sigma_0}$ (this is the case for example if $B_0$ is not a boundary subgroup of $\pi_1(\Sigma)$).
				Then the embedding $H_{\Sigma_0, B_0} \to \Aut_{A_{\Sigma_0}}(T)$ is in fact an isomorphism.
			\end{lemma}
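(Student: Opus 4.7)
The embedding is injective by construction, so the content of the lemma is surjectivity. Given $\Phi \in \Aut_{A_{\Sigma_0}}(T)$, my plan is to prove that $\Phi(\pi_1(\Sigma_0)) = \pi_1(\Sigma_0)$ as subgroups of $T$; once this is in place, the rest is essentially formal. Indeed, the restriction $\sigma := \Phi|_{\pi_1(\Sigma_0)}$ is then an automorphism of $\pi_1(\Sigma_0)$ that is the identity on every boundary subgroup (each such subgroup is an edge group of $\Gamma_{\Sigma_0}$, hence lies in $A_{\Sigma_0}$, where $\Phi$ is the identity), so in particular $\sigma \in H_{\Sigma_0, B_0}$; and $\Phi$ must coincide with the extension of $\sigma$ from Example \ref{EmbeddingsHSigma}(2), because the two automorphisms agree on both vertex groups $A_{\Sigma_0}$ and $\pi_1(\Sigma_0)$, which together generate $T$.

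To establish $\Phi(\pi_1(\Sigma_0)) = \pi_1(\Sigma_0)$, I first upgrade the maximal cyclicity hypothesis to malnormality of $B_0$ in $T$. By Remark \ref{TowersLimitHYpTowersHyp}, $T$ is a torsion-free hyperbolic group, hence CSA, so every maximal cyclic subgroup is automatically malnormal. That $B_0$ is maximal cyclic in the whole of $T$ follows from combining the hypothesis on $A_{\Sigma_0}$ with the fact that $B_0$ is also maximal cyclic in $\pi_1(\Sigma_0)$ (boundary subgroups of surface groups are maximal cyclic), via a short Bass--Serre argument on the tree $\tilde T$ dual to $\Gamma_{\Sigma_0}$: any $z \in T$ commuting with a generator of $B_0$ stabilizes the fixed point set of $B_0$, which is reduced to the edge $e$ labelled by $B_0$ and its two endpoints $v_A$, $v_B$, and must therefore lie in $A_{\Sigma_0} \cap \pi_1(\Sigma_0) = B_0$. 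The resulting malnormality makes the action of $T$ on $\tilde T$ 2-acylindrical: the stabilizer of any geodesic segment of length at least $2$ is trivial.

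With this in hand, I would observe that $\Phi$ preserves the cyclic JSJ of $T$ relative to $A_{\Sigma_0}$ --- represented, up to collapse, by $\Gamma_{\Sigma_0}$ itself --- so $\Phi$ sends the single $T$-orbit of surface-type vertices to itself and $\Phi(\pi_1(\Sigma_0)) = g\pi_1(\Sigma_0) g^{-1}$ for some $g \in T$. From $\Phi|_{A_{\Sigma_0}} = \id$ and $\pi_1(\Sigma_0) \cap A_{\Sigma_0} = B_0$ one reads off that $g\pi_1(\Sigma_0)g^{-1} \cap A_{\Sigma_0} = B_0$; but this intersection is the stabilizer of the segment $[v_A, g \cdot v_B]$ in $\tilde T$, and since it is non-trivial, 2-acylindricity forces the segment to have length at most one, so that $g \cdot v_B$ is adjacent to $v_A$. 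A direct analysis of the edges of $\tilde T$ at $v_A$ together with malnormality of $B_0$ then places $g$ inside $\pi_1(\Sigma_0)$, whence $g\pi_1(\Sigma_0)g^{-1} = \pi_1(\Sigma_0)$. The main obstacle I anticipate is justifying rigorously the identification of $\Gamma_{\Sigma_0}$ as a cyclic JSJ of $T$ relative to $A_{\Sigma_0}$ and the consequent $\Aut$-invariance of the surface-type conjugacy class; this relies on the QH nature of $\pi_1(\Sigma_0)$, the rigidity of $A_{\Sigma_0}$ as a vertex group, and the freely-indecomposability of $T$ relative to $A_{\Sigma_0}$, all of which are consequences of the minimal-rank hyperbolic tower structure of $T$ arranged in this section.
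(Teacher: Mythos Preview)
Your approach to showing $\Phi(\pi_1(\Sigma_0)) = \pi_1(\Sigma_0)$ via JSJ uniqueness and acylindricity is different from the paper's: the paper instead pulls back the splitting $\Gamma_{\Sigma_0}$ to a decomposition of $\pi_1(\Sigma_0)$ dual to simple closed curves, identifies the piece $S$ containing $B_0$, argues that $h(S)$ must land in the $\pi_1(\Sigma_0)$-vertex (using maximal cyclicity of $B_0$ to rule out the $A_{\Sigma_0}$-vertex), and then applies a surface-group rigidity lemma from \cite{PerinElementary} to conclude $S=\pi_1(\Sigma_0)$. Your route is more structural, but the JSJ identification you flag as the ``main obstacle'' is doing real work and is not set up in this section; the minimal-rank hypothesis on which you rely does not even appear in the statement of the lemma.

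More seriously, the step you describe as ``essentially formal'' is not. When $\Sigma_0$ has more than one boundary component, the graph $\Gamma_{\Sigma_0}$ has several edges, and two things you assert fail:
\begin{itemize}
\item Only $B_0$ lies in $A_{\Sigma_0}$. The other boundary subgroups $B_i \leq \pi_1(\Sigma_0)$ are edge groups on the $\pi_1(\Sigma_0)$-side and are merely \emph{conjugate} into $A_{\Sigma_0}$ via the stable letters $t_i$; they are not contained in $A_{\Sigma_0}$. So $\sigma$ is \emph{not} the identity on every boundary subgroup, and you must still argue that $\sigma|_{B_i}$ is an inner automorphism of $\pi_1(\Sigma_0)$ --- i.e.\ that $\Phi(B_i)$ is conjugate to $B_i$ \emph{inside} $\pi_1(\Sigma_0)$, not just in $T$. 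This is exactly what the paper extracts from the rigidity lemma of \cite{PerinElementary}.
\item The two vertex groups $A_{\Sigma_0}$ and $\pi_1(\Sigma_0)$ do \emph{not} generate $T$: the stable letters $t_i$ are needed as well. Hence knowing that $\Phi$ and the chosen extension of $\sigma$ agree on both vertex groups does not force them to agree on $T$; one must also control $\Phi(t_i)$.
\end{itemize}
So the reduction ``once $\Phi(\pi_1(\Sigma_0))=\pi_1(\Sigma_0)$ the rest is formal'' does not go through as written; additional argument is required precisely at the point where the paper invokes the external surface-group lemma.
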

			
			\begin{proof}
			Indeed, if $h \in \Aut_{A_{\Sigma_0}}(T)$, it fixes elements of $A_{\Sigma_0}$, thus its restriction to the subgroup $\pi_1(\Sigma_0)$ is the identity on $B_0$ and a conjugation on other boudary subgroups of $\pi_1(\Sigma_0)$, hence all boundary subgroups are elliptic in $\Gamma_{\Sigma_0}$. The subgroup $h(\pi_1(\Sigma_0))$ inherits a decomposition from $\Gamma_{\Sigma_0}$ which in turn induces a decomposition $\Delta_{\Sigma_0}$ on $\pi_1(\Sigma_0)$ which is dual to a set of simple closed curves on $\Sigma_0$. The vertex group $S$ of $\Delta_{\Sigma_0}$ containing $B_0$  must be sent by $h$ to a vertex group of $\Gamma_{\Sigma_0}$ containing $B_0$: by injectivity of $h$ it cannot be sent to $A_{\Sigma_0}$, and since $B_0$ is maximal cyclic the only other possibility is $\pi_1(\Sigma_0)$ itself. Now $S$ corresponds to a subsurface of $\Sigma_0$ and its maximal boundary subgroups are sent to maximal boundary subgroups of $\pi_1(\Sigma_0)$:  by Lemma 3.13 in \cite{PerinElementary}, $h$ is in fact an isomorphism of surface groups between $S$ and $\pi_1(\Sigma_0)$, which implies that the subsurface corresponding to $S$ is in fact $\Sigma_0$ itself. This implies the result.
		\end{proof}
 
  We will adopt the convention that by a subsurface, we always mean a $\pi_1$-embedded subsurface.
  
  \begin{proposition} \label{DefinabilityOfOrbits}
  Let $T$ be a hyperbolic tower with minimal rank over the parameter group $\F(a)$. Suppose the top floor structure has a unique surface $\Sigma$. Let $\Sigma_0$ be a proper subsurface of $\Sigma$.
  Then the orbit of any tuple of elements of $T$ by $H_{\Sigma_0}$ (identified to $\Aut_{A_{\Sigma_0}}(T)$ as by the previous lemma) is definable over $A_{\Sigma_0}$.
  \end{proposition}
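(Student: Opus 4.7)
The plan is to define the orbit by an explicit existential formula over $A_{\Sigma_0}$. Fix a finite presentation $\pi_1(\Sigma_0)=\langle \bar{g}\mid R(\bar{g})\rangle$ together with words $b_1(\bar{g}),\dots,b_k(\bar{g})$ representing the boundary subgroups, where $b_1$ corresponds to $B_0$. By construction of $\Gamma_{\Sigma_0}$, the group $T$ admits a presentation on the generators $A_{\Sigma_0}\cup\{g_1,\dots,g_m\}\cup\{\tau_2,\dots,\tau_k\}$ with relations $R(\bar{g})$, $b_1(\bar{g})=\beta_1$, and $\tau_j b_j(\bar{g})\tau_j^{-1}=\beta_j$ for $j\ge 2$, where each $\beta_j\in A_{\Sigma_0}$ is the prescribed image of the boundary $b_j$ and $\tau_j$ is the stable letter of the HNN-factor corresponding to $b_j$. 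Accordingly, each component of $\bar{t}=(t_1,\dots,t_n)$ can be written as a word $w_i(\bar{g},\bar{\tau},\bar{a})$ with $\bar{\tau}=(\tau_2,\dots,\tau_k)$ and $\bar{a}$ a fixed tuple in $A_{\Sigma_0}$. The candidate defining formula is
\[
\phi(\bar{x}) \;:=\; \exists \bar{y}\,\exists \bar{u}\,\Bigl[\,R(\bar{y})\wedge b_1(\bar{y})=\beta_1\wedge\bigwedge_{j=2}^{k}u_j b_j(\bar{y})u_j^{-1}=\beta_j \wedge \bigwedge_{i=1}^n x_i=w_i(\bar{y},\bar{u},\bar{a})\,\Bigr].
\]

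The inclusion $H_{\Sigma_0}\cdot\bar{t}\subseteq \phi(T,\bar{a})$ is the easy direction: for any $h\in H_{\Sigma_0}$, the pair $(\bar{y},\bar{u}):=(h(\bar{g}),h(\bar{\tau}))$ witnesses $\phi(h(\bar{t}))$, because $h$ is an automorphism fixing $A_{\Sigma_0}$ and $B_0$ pointwise and acting by conjugation on each remaining boundary subgroup. Conversely, let $(\bar{y},\bar{u})$ be any witness of $\phi(\bar{x})$ in $T$. Since the conditions of $\phi$ are precisely the defining relations of $T$ in the presentation above, the assignment $g_i\mapsto y_i$, $\tau_j\mapsto u_j$, identity on $A_{\Sigma_0}$, extends to a well-defined endomorphism $\rho:T\to T$ with $\rho|_{A_{\Sigma_0}}=\id$ and $\rho(\bar{t})=\bar{x}$.

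The crux is then to show that $\rho$ is in fact an automorphism of $T$; granting this, the preceding lemma identifies $\rho$ with an element of $H_{\Sigma_0,B_0}$ and places $\bar{x}$ in the orbit. Once injectivity is obtained, surjectivity follows either via the co-Hopf property of torsion-free hyperbolic groups (applicable since $T$ is hyperbolic by Remark \ref{TowersLimitHYpTowersHyp}), or more directly by observing that the image $\rho(T)$ inherits through $\rho$ a floor structure of the same combinatorial type as that of $T$ over $A_{\Sigma_0}$, so that any proper such image would collide with the rigidity stated in Lemma \ref{MinimalRankOnlyOneFloorStructure}. The main obstacle is thus the injectivity of $\rho$, and here I intend to deploy a Sela-style shortening argument on the restriction $\rho|_{\pi_1(\Sigma_0)}$: if $\rho$ had nontrivial kernel, one could replace $\rho$ by its composition with suitable modular automorphisms so as to shorten the images of generators, extracting in the limit a proper quotient of $T$ which retracts onto $\F(a)$ through a nontrivial free product decomposition with $\F(a)$ contained in one factor. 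By Remark \ref{EquivalentDefMinimalRank} combined with Lemma \ref{MinRankTowerProperties}, this would contradict the minimal rank hypothesis. Carrying out this shortening while respecting the fact that $\rho$ is the identity on $A_{\Sigma_0}$ and that the boundary images are already constrained by the formula $\phi$ is the most delicate step of the argument, and the place where the minimal rank and uniqueness-of-surface hypotheses are essentially used.
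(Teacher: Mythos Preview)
The paper's own proof is a two-line application of an external black box: Theorem~5.3 of \cite{PerinSklinosForking} asserts that in any torsion-free hyperbolic group $G$ which is \emph{concrete} over a subset $A$ (meaning $A$ lies in no proper free factor and $G$ admits no extended hyperbolic floor structure over $A$), the $\Aut_A(G)$-orbit of any tuple is definable over $A$. Lemma~\ref{MinimalRankOnlyOneFloorStructure} is then invoked to verify that $T$ is concrete over $A_{\Sigma_0}$, and the proposition follows. Your approach is genuinely different: you try to exhibit an explicit existential formula and prove directly that every endomorphism of $T$ fixing $A_{\Sigma_0}$ is an automorphism. In effect you are attempting to reprove, in this special situation, the content of the cited Perin--Sklinos theorem.

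There is, however, a real gap. The injectivity of $\rho$ is the entire difficulty, and what you offer is a plan rather than a proof. The sentence ``replace $\rho$ by its composition with suitable modular automorphisms so as to shorten the images of generators, extracting in the limit a proper quotient'' does not make sense as stated: shortening arguments operate on \emph{sequences} of homomorphisms (typically to a fixed hyperbolic target) in order to produce a limiting action on an $\R$-tree, whereas here you have a single endomorphism $\rho$, and precomposing it by modular automorphisms does not change its kernel. Even if one reformulates this correctly (say, by considering the sequence $\rho,\rho^2,\rho^3,\dots$ or by composing with the retractions to $\F(a)$), you have not explained why the outcome is a free product with $\F(a)$ in one factor rather than, say, a pinched-surface quotient that still has minimal rank. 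Your remark about co-Hopfianness is also imprecise: torsion-free hyperbolic groups are co-Hopf only when freely indecomposable, and you have not established this for $T$ absolutely (minimal rank gives only relative indecomposability). None of this is unfixable, but carrying it out amounts to redoing a substantial portion of \cite{PerinSklinosForking}; the paper simply cites that work instead.
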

  
  This follows from Theorem 5.3 of \cite{PerinSklinosForking}, which states that if $G$ is a torsion-free hyperbolic group which is concrete over a subset $A$, then the orbit of any tuple of elements of $G$ under $\Aut_A(G)$ (the group of automorphisms of $G$ which fix $A$ pointwise) is definable over $A$. 
  A group $G$ is said to be concrete over $A$ if $A$ is not contained in a proper free factor of $G$, and $G$ does not admit the structure of an extended hyperbolic floor over $A$.  
  
  In the context above, Lemma \ref{MinimalRankOnlyOneFloorStructure} implies that $T$ is concrete with respect to $A_{\Sigma_0}$ and Proposition \ref{DefinabilityOfOrbits} thus follows immediately.

\section{Free products and the Shelah rank}
  
  \label{FP&SR} 
  For the rest of this section we will fix a first-order structure $M$ and a set $A$ of parameters.
  
  Given a subset $\mathcal{H}\subset Aut_A(M)$ and a definable set $X \subset M^k$, we denote by
  $\mathcal{H}(X)$ the set $\{h(u) \mid h\in\mathcal{H}, u \in X \}\subset M^{k}$.
  Given subsets $\mathcal{H}_1,\mathcal{H}_2\subset Aut_A(M)$, we denote by $\mathcal{H}_1\circ\mathcal{H}_2$
  the set $\{h_1\circ h_2\,|\,h_1\in\mathcal{H}_1,\,h_2\in\mathcal{H}_2\}$. In such an expression we might write $h$ in place of the singleton $\{h\}$.
  
  \begin{lemma}
  \label{preservation of definability}
  Let $M$ be a first order structure, and let $v \subseteq M$ be a finite tuple. Let $\mathcal{H}\subset Aut_A(M)$ be such that 
  the set $\mathcal{H} \cdot v = \mathcal{H}(\{v\})$ is definable over some set $B$ of parameters.
  Suppose the set $X\subset M^{k}$ is definable over $v$.
  
  Then $\mathcal{H}(X)$ is a subset of $M^{k}$ which is definable over $B$.
  \end{lemma}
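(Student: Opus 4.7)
The plan is to push each $h \in \mathcal{H}$ through a defining formula for $X$, turning the potentially wild union $\bigcup_{h \in \mathcal{H}} h(X)$ into a single existential quantifier ranging over the set $\mathcal{H} \cdot v$, which by hypothesis is already definable over $B$.

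Concretely, I would first choose a formula $\phi(x,y)$ whose parameters other than $v$ lie in $A$, so that $X = \{u \in M^k \mid M \models \phi(u, v)\}$. Because every $h \in \mathcal{H}$ is an automorphism of $M$ fixing $A$ pointwise, the basic transport-of-structure property of first-order formulas gives $h(X) = \{u \in M^k \mid M \models \phi(u, h(v))\}$. Taking the union over $h \in \mathcal{H}$ and reparametrising by the tuple $w = h(v)$, which ranges exactly over $\mathcal{H} \cdot v$, yields
\[
\mathcal{H}(X) = \{u \in M^k \mid \exists w \in \mathcal{H}\cdot v, \; M \models \phi(u, w)\}.
\]
Finally, picking a formula $\psi(y, \bar b)$ with $\bar b$ a tuple in $B$ that defines $\mathcal{H} \cdot v$ (using the hypothesis), we see that $\mathcal{H}(X)$ is defined by $\exists y\,(\psi(y, \bar b) \wedge \phi(x, y))$, exhibiting it as definable over $B$.

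There is essentially no obstacle to this argument; the only point that needs a little care is the first step. We must ensure that when we write $X$ as $\phi(M, v)$, the parameters of $\phi$ other than those from $v$ lie in $A$, so that they are genuinely fixed by every $h \in \mathcal{H} \subseteq \Aut_A(M)$ and the rewriting $h(\phi(M,v)) = \phi(M, h(v))$ is legitimate. This is compatible with the convention fixed at the start of Section \ref{FP&SR} that we always work relative to the parameter set $A$, so that "definable over $v$" tacitly allows additional parameters from $A$; equivalently, we may as well enrich the language by constants for $A$, as in Section 2.
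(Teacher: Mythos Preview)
Your proof is correct and follows essentially the same approach as the paper: both rewrite $h(X)$ as $\psi(M,h(v))$ using that $h$ is an automorphism, and then express the union as a single existential formula quantifying over the definable set $\mathcal{H}\cdot v$. Your additional remark about ensuring the hidden parameters of the defining formula for $X$ lie in $A$ is a welcome clarification that the paper leaves implicit.
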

  \begin{proof}
  Let $\phi(x,B)$ be a defining formula for $\mathcal{H}\cdot v$ and $\psi(y,v)$ a defining formula for $X$. We can then write:
  \begin{align*}
  	\bunion{h\in\mathcal{H}}{h(X)}{}=\bigcup_{h\in\mathcal{H}}h(\psi(M,v))=&\bigcup_{h\in\mathcal{H}}\psi(M,h(v))=
  	\\
  	=\{x\in M^{k}\,|\,\exists z\,\phi(z,B)&\wedge\psi(y,z)\,\}
  \end{align*}

  \end{proof}
  \begin{corollary} \label{ComposingSetsOfAut}
  Let $\mathcal{H}_{1}$ and $\mathcal{H}_{2}$ be subsets of $Aut_A(M)$ such that for $i\in\{1,2\}$ the set $\mathcal{H}_{i}\cdot v$ is definable over $v$. Then $(\mathcal{H}_{1}\circ\mathcal{H}_{2})\cdot v$ is definable over
  $v$. 
  \end{corollary}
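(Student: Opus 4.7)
The plan is to derive the corollary as a one-line application of Lemma \ref{preservation of definability}, after a small set-theoretic rewriting. First I would observe that by the definitions of the operations involved,
\[
(\mathcal{H}_{1}\circ\mathcal{H}_{2})\cdot v \;=\; \{\,h_{1}(h_{2}(v)) \mid h_{1}\in\mathcal{H}_{1},\,h_{2}\in\mathcal{H}_{2}\,\} \;=\; \mathcal{H}_{1}\bigl(\mathcal{H}_{2}\cdot v\bigr),
\]
so the problem reduces to showing the right-hand side is definable over $v$.

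Next I would invoke Lemma \ref{preservation of definability} with $\mathcal{H}=\mathcal{H}_{1}$ and with the definable set $X:=\mathcal{H}_{2}\cdot v$. The hypothesis of the corollary for $i=2$ gives precisely that this $X$ is definable over $v$, which is the ``$X$ is definable over $v$'' input required by the lemma. The hypothesis for $i=1$ gives that $\mathcal{H}_{1}\cdot v$ is definable over $v$, which plays the role of $B=\{v\}$ in the statement of the lemma. The lemma then outputs that $\mathcal{H}_{1}(X)$ is definable over $v$, and combining with the set-theoretic identity above finishes the proof.

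Since this is an immediate two-line deduction, I do not expect any real obstacle; the only point to keep straight is which of the two hypotheses is used for the ``acting family'' role and which for the ``acted-upon set'' role in Lemma \ref{preservation of definability}. In particular, no appeal to properties of the ambient structure $M$ or of the set $A$ of parameters is needed beyond what was already used to prove that lemma.
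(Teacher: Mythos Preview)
Your proposal is correct and is exactly the intended argument: the paper states this as an immediate corollary of Lemma~\ref{preservation of definability} without proof, and your application of that lemma with $\mathcal{H}=\mathcal{H}_{1}$, $X=\mathcal{H}_{2}\cdot v$, and $B=\{v\}$ is precisely what is meant.
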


To witness a lower bound in Shelah rank, we will consider sequences of definable sets whose elements are the images of a given tuple by subsets of some automorphism groups. The purpose of the following definition is to give conditions on these subsets of the group of automorphisms we choose that will be sufficient to enable us to give lower bounds on Shelah rank. 

\begin{definition} \label{WitnessingSequenceDef} Fix a subset $A$ of parameters in a model $M$. Let $v$ be a tuple in $M$ and let $\GG \leq \Aut_A(M)$.

If $\alpha$ is a limit ordinal, a \textbf{witnessing sequence in $\GG$ of length $\alpha$ relative to $v$} is a sequence
$\{\mathcal{S}_{\beta}\}_{\beta < \alpha}$ of non-empty subsets of $\GG$ such that
  \begin{enumerate}[(i)]
  \item $S_{0}=\{Id_{M}\}$. \label{0_c}
  \item The set $S_{\beta}\cdot v$ is definable over $v$ for all $\beta<\alpha$. \label{definability_c}
 \item \label{succesor_c} If $ֿ\beta < \alpha$, then there is a sequence $\{\sigma^{\beta}_{n}\}_{n\in\N}$ of elements from $\GG$ such that:
  	\begin{enumerate}[a)]
  		\item The sets $\{\sigma^{\beta}_{n} \circ \mathcal{S}_{\beta}\}_{n\in\N}$ are pairwise disjoint;
  		\item $\sigma^{\beta}_{n}\circ\mathcal{S}_{\beta}\subset\mathcal{S}_{\beta+1}$ for any $n\in\N$.
  		\item $\sigma^{\beta}_n \circ (\sigma^{\beta}_0)^{-1} \circ \mathcal{S}_{\beta+1} = \mathcal{S}_{\beta+1}$ for all $n \in \N$.
  	\end{enumerate}
  \item If $\beta < \alpha$ is a limit ordinal, then there exists a sequence $\beta_k \to \beta$ with $\beta_k<\beta$ such that $\mathcal{S}_{\beta_k} \subseteq \mathcal{S}_{\beta}$. \label{inclusion_c}
  \end{enumerate}
\end{definition}

The following lemma shows how witnessing sequences help give lower bounds on Shelah ranks:
  \begin{lemma} \label{WitnessingSequence} Let $A$ be a subset of parameters in a model $M$, and let $v$ be a tuple in $M$. Let $X\subset M^{k}$ be definable over $v$ and non empty.
  
  Suppose $\GG \leq \Aut_A(M)$ be such that if $h, g \in \GG$ satisfy $h(X) \cap g(X) \neq \emptyset$, then $h=g$, and suppose $\GG(X)$ is contained in a definable set $Y$.
  
  If there exist a witnessing sequence in $\GG$ of length $\alpha$ relative to $v$, then $\SR_A(Y)\geq \SR_A(X) + \alpha$.
  \end{lemma}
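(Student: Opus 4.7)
The plan is to prove by transfinite induction on $\beta < \alpha$ that
\[
\SR_A(\mathcal{S}_\beta(X)) \geq \SR_A(X) + \beta.
\]
Once this is in hand, since $\mathcal{S}_\beta(X) \subseteq \GG(X) \subseteq Y$ for every $\beta < \alpha$, monotonicity of Shelah rank (Fact \ref{rank basic facts}(3)) gives $\SR_A(Y) \geq \SR_A(X) + \beta$ for all $\beta < \alpha$, and using that $\alpha$ is a limit ordinal this yields the desired $\SR_A(Y) \geq \SR_A(X) + \alpha$. A standing observation I would rely on is that, by condition (ii) of Definition \ref{WitnessingSequenceDef} together with Lemma \ref{preservation of definability}, each $\mathcal{S}_\beta(X)$ is definable over $v$, so the ranks appearing are all well-defined. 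The hypothesis that $h(X) \cap g(X) \neq \emptyset$ implies $h = g$ for $h,g \in \GG$ is exactly what will let me upgrade disjointness statements about subsets of $\GG$ into disjointness statements about their images in $M^k$.

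The base case $\beta = 0$ is immediate from condition (i), which forces $\mathcal{S}_0(X) = X$. For the successor case $\beta = \gamma + 1$, I assume the inductive hypothesis at $\gamma$ and let $\{\sigma_n^\gamma\}_{n \in \N}$ be the sequence supplied by condition (iii). Set $\tau_n := \sigma_n^\gamma \circ (\sigma_0^\gamma)^{-1} \in \Aut_A(M)$. Condition (iii)(a) combined with the free-action hypothesis on $\GG$ gives that the sets $\tau_n(\sigma_0^\gamma(\mathcal{S}_\gamma(X))) = \sigma_n^\gamma(\mathcal{S}_\gamma(X))$ are pairwise disjoint; (iii)(b) places them inside $\mathcal{S}_{\gamma+1}(X)$; and (iii)(c), asserting $\tau_n \circ \mathcal{S}_{\gamma+1} = \mathcal{S}_{\gamma+1}$, translates directly into the setwise preservation $\tau_n(\mathcal{S}_{\gamma+1}(X)) = \mathcal{S}_{\gamma+1}(X)$. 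Since $\sigma_0^\gamma \in \Aut_A(M)$, Fact \ref{rank basic facts}(2) gives $\SR_A(\sigma_0^\gamma(\mathcal{S}_\gamma(X))) = \SR_A(\mathcal{S}_\gamma(X)) \geq \SR_A(X) + \gamma$. An application of Lemma \ref{automorphisms and rank}, with the roles of $X$ and $Y$ there played by $\mathcal{S}_{\gamma+1}(X)$ and $\sigma_0^\gamma(\mathcal{S}_\gamma(X))$ respectively and with the automorphisms $\tau_n$, then yields $\SR_A(\mathcal{S}_{\gamma+1}(X)) \geq \SR_A(X) + \gamma + 1$.

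For a limit ordinal $\beta < \alpha$, condition (iv) supplies $\beta_k \to \beta$ with $\mathcal{S}_{\beta_k} \subseteq \mathcal{S}_\beta$, hence $\mathcal{S}_{\beta_k}(X) \subseteq \mathcal{S}_\beta(X)$. Combining monotonicity of Shelah rank with the inductive hypotheses $\SR_A(\mathcal{S}_{\beta_k}(X)) \geq \SR_A(X) + \beta_k$ and taking the supremum over $k$ delivers $\SR_A(\mathcal{S}_\beta(X)) \geq \SR_A(X) + \beta$, using continuity of ordinal addition in its second argument at the limit $\beta$.

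The step I expect to demand the most bookkeeping is the successor case, since it is where Lemma \ref{automorphisms and rank} is actually invoked and one must simultaneously verify three matching conditions for the $\tau_n$: setwise preservation of the ambient set $\mathcal{S}_{\gamma+1}(X)$, inclusion of the translates inside it, and pairwise disjointness of those translates. Each of these pins down to exactly one of (a), (b), (c) in condition (iii), and the free-action hypothesis on $\GG$ is what mediates between disjointness in the group of automorphisms and disjointness of images in $M^k$. The definition of witnessing sequence appears to have been engineered precisely so that this matching is automatic.
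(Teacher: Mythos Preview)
Your proof is correct and follows essentially the same transfinite induction as the paper's own argument; in fact you are more explicit than the paper in noting that the hypothesis ``$h(X)\cap g(X)\neq\emptyset \Rightarrow h=g$'' is precisely what converts the pairwise disjointness of the subsets $\sigma_n^\gamma\circ\mathcal{S}_\gamma$ in $\GG$ (condition (iii)(a)) into pairwise disjointness of the sets $\sigma_n^\gamma(\mathcal{S}_\gamma(X))$ in $M^k$, a step the paper leaves implicit.
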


  \begin{proof} Note that by Lemma \ref{preservation of definability} and condition (\ref{definability_c}), the set $\mathcal{S}_{\beta}(X)$ is definable over $v$ for all $\beta < \alpha$. 
  
  Let us prove by induction that $\SR_A(\mathcal{S}_{\beta}(X))\geq \SR_A(X) + \beta$ for all $\beta < \alpha$. It clearly holds for $\beta=0$ by Condition (\ref{0_c}). Now fix $\beta$ and suppose that for every $\gamma < \beta$ we have $\SR_A(\mathcal{S}_{\gamma}(X))\geq \SR_A(X) + \gamma$. 
  
  Suppose first $\beta=\gamma+1$. By condition (\ref{succesor_c}),  there exists a sequence $\sigma^{\gamma}_n$ such that the sets $\sigma^{\gamma}_n(\mathcal{S}_{\gamma}(X))$ are pair-wise disjoint subsets of $\mathcal{S}_{\gamma+1}(X)$. The set $Z = \sigma^{\gamma}_{0}(\mathcal{S}_{\gamma}(X))$ is definable. Since $\SR_A$ is invariant under the action of automorphisms that preserve $A$, we have $\SR_A(\sigma^{\gamma}_0 (\mathcal{S}_{\gamma}(X)))\geq \SR_A(X) + \gamma$. Finally, by (\ref{succesor_c}) we have that the automorphisms $\sigma^{\gamma}_n \circ (\sigma^{\gamma}_0)^{-1}$ preserve $\mathcal{S}_{\gamma+1}(X) = \mathcal{S}_{\beta}(X)$, so by Lemma \ref{automorphisms and rank} we have $\SR_A(\mathcal{S}_{\beta}(X)) \geq \SR_A(\mathcal{S}_{\gamma}(X)) +1$ and hence
  $$\SR_A(\mathcal{S}_{\beta}(X)) \geq \SR_A(X) +\gamma +1 = \SR_A(X) + \beta$$
  
  Suppose now that $\beta$ is a limit ordinal. Condition (\ref{inclusion_c}) gives a sequence $\beta_k \to \beta$ with $\beta_k < \beta$ and $\mathcal{S}_{\beta_k} \subseteq \mathcal{S}_{\beta}$, thus and $\mathcal{S}_{\beta_k}(X) \subseteq \mathcal{S}_{\beta}(X)$. Now by induction hypothesis, $\SR_A(\mathcal{S}_{\beta_k} (X)) \geq \SR_A(X) + \beta_k$  so we get that  $\SR_A(\mathcal{S}_{\beta}(X)) \geq \SR_A(X) + \beta_k$ for all $k$. Therefore $\SR_A(\mathcal{S}_{\beta}(X)) \geq \SR_A(X) + \beta$.
  
Thus we know that $\SR_A(\mathcal{S}_{\beta}(X))\geq \SR_A(X) + \beta$ for all $\beta < \alpha$. But $Y$ is definable and contains all the above sets, thus $\SR_A(Y) \geq \SR_A(X)+\alpha$.
  \end{proof}

  \newcommand{\h}[0]{\tau}
  \newcommand{\hh}[0]{\rho}

  \begin{lemma}
  \label{rank times omega} We fix a subset $A$ of parameters in a model $M$, and $v$ a tuple in $M$. 
  Let  $\GG \leq \Aut_A(M)$.  
  
  Let $\mathcal{K}\leq \GG \leq \Aut_A(M)$ be such that $\mathcal{K} \cdot v$ is definable over $v$. Suppose the following holds
  \begin{enumerate}
  	\item there exists a witnessing sequence in $\mathcal{K}$ of length $\alpha$ relative to $v$. 
  	\item there exists an element $\tau \in\GG\setminus\{1\}$ such that $\tau(v)$ is definable over $v$ and the subgroup of $\GG$ generated by $\mathcal{K}$ and $\tau$ is isomorphic to the free product $\subg{\tau}\frp\mathcal{K}$
  \end{enumerate}
  Then there exists a witnessing sequence in $\GG$ of length $\alpha \cdot \omega$ relative to $v$.
  \end{lemma}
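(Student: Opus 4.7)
The plan is to use $\tau$ as a separator to stack $\omega$ shifted copies of the given witnessing sequence inside the free product $\langle\tau\rangle\ast\mathcal{K}\leq\GG$. Writing $\gamma<\alpha\omega$ uniquely as $\gamma=\alpha n+\beta$ with $n<\omega$ and $\beta<\alpha$, I propose
\[
\mathcal{T}_{\alpha n+\beta}\;:=\;\mathcal{S}_\beta\circ(\tau\circ\mathcal{K})^n\quad(\beta>0\text{ or }n=0),
\qquad
\mathcal{T}_{\alpha n}\;:=\;\mathcal{K}\circ(\tau\circ\mathcal{K})^{n-1}\quad(n\geq 1).
\]
The uniqueness of normal form in $\langle\tau\rangle\ast\mathcal{K}$ is the main combinatorial tool throughout.

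Condition (i) is immediate: $\mathcal{T}_0=\mathcal{S}_0=\{\mathrm{Id}\}$. Condition (ii) follows by iterating Corollary \ref{ComposingSetsOfAut}, since the definability of $\mathcal{S}_\beta\cdot v$, $\mathcal{K}\cdot v$, and $\tau(v)$ over $v$ are hypotheses. For the successor condition (iii) at $\gamma=\alpha n+\beta$ with $\beta>0$ (or $n=0$), I take $\sigma_m^\gamma:=\sigma_m^\beta$ from the original sequence; conditions (a), (b), (c) reduce to the originals after peeling off the common suffix $(\tau\mathcal{K})^n$. Concretely, an equality $\sigma_m^\beta s w=\sigma_{m'}^\beta s'w'$ with $s,s'\in\mathcal{S}_\beta$, $w,w'\in(\tau\mathcal{K})^n$ rearranges to $w'w^{-1}\in\mathcal{K}$; iterated cancellation in the free product normal form forces $w=w'$ and then reduces to the original disjointness/preservation.

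The essential new argument occurs at $\gamma=\alpha n$ for $n\geq 1$, where we need both a successor transition and a limit approximation. For the successor I propose $\sigma_m^{\alpha n}:=\sigma_m^0\circ\tau$: condition (b) becomes $\sigma_m^0\in\mathcal{S}_1$, which is the original (b) at $\beta=0$; pairwise disjointness holds because distinct $\sigma_m^0$ occupy distinct leftmost $\mathcal{K}$-slots in the normal form (with brief attention to the possibility that $\sigma_0^0=\mathrm{Id}$, in which case the normal form simply begins with $\tau$); and condition (c) collapses via $\tau(\sigma_0^0\tau)^{-1}=(\sigma_0^0)^{-1}$ to $\sigma_m^0(\sigma_0^0)^{-1}\mathcal{S}_1=\mathcal{S}_1$. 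For the limit approximation, any cofinal sequence $\beta_k\to\alpha$ with $\beta_k>0$ gives $\mathcal{T}_{\alpha(n-1)+\beta_k}=\mathcal{S}_{\beta_k}(\tau\mathcal{K})^{n-1}\subseteq\mathcal{K}(\tau\mathcal{K})^{n-1}=\mathcal{T}_{\alpha n}$. The limits inside a block reduce directly to the original (iv) at $\beta$ after trimming to $\beta_k>0$.

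The main obstacle is precisely the choice of $\mathcal{T}_{\alpha n}$. The most natural uniform formula $\mathcal{T}_{\alpha n}:=\mathcal{S}_0(\tau\mathcal{K})^n=(\tau\mathcal{K})^n$ fails the limit condition: each $\mathcal{S}_\beta(\tau\mathcal{K})^{n-1}$ lives in the ``$(n-1)$-$\tau$ stratum'' of the free product, which by uniqueness of normal form is disjoint from the ``$n$-$\tau$ stratum'' containing $(\tau\mathcal{K})^n$. The corrected choice $\mathcal{T}_{\alpha n}:=\mathcal{K}(\tau\mathcal{K})^{n-1}$ absorbs all of these lower translates while remaining definable via Corollary \ref{ComposingSetsOfAut}. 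The dual temptation of adjoining $\mathcal{K}(\tau\mathcal{K})^{n-1}$ to every $\mathcal{T}_{\alpha n+\beta}$ uniformly must be resisted: the common piece would then appear inside every successor translate and destroy the pairwise disjointness required by (iii)(a).
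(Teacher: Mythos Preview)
Your construction and verification coincide with the paper's proof: the paper defines the extended sequence by exactly the same formulas $\mathcal{S}_\delta\circ(\tau\circ\mathcal{K})^m$ for $\delta\neq 0$ and $\mathcal{K}\circ(\tau\circ\mathcal{K})^{m-1}$ for $\delta=0$, and uses the same choices $\sigma^\beta_n=\sigma^\delta_n$ (respectively $\sigma^0_n\circ\tau$) for the successor step, with the same appeal to normal forms in the free product for disjointness and to Corollary~\ref{ComposingSetsOfAut} for definability. Your additional remarks on why the naive uniform formula $(\tau\mathcal{K})^n$ fails the limit condition, and on the harmless case $\sigma^0_0=\mathrm{Id}$, are correct and go slightly beyond what the paper spells out.
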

  
  \begin{proof}
  We extend the sequence $(\mathcal{S}_{\beta})_{\beta < \alpha}$ in $\mathcal{K}$ to one in $\GG$ over the index set $\alpha\cdot\omega$ as follows. 
  Any ordinal $\alpha \leq \beta<\alpha\cdot\omega$ is of the form $\beta=\alpha\cdot m+\delta$, where $1 \leq m <\omega$, and $\delta < \alpha$. If $\delta \neq 0$ we set 
  \begin{align*}
  	\mathcal{S}_{\beta} = \mathcal{S}_{\delta}\circ(\h\circ\mathcal{K})^{m}
  \end{align*}
  and if $\delta=0$ we let 
  \begin{align*}
  \mathcal{S}_{\beta} = \mathcal{K} \circ (\h\circ\mathcal{K})^{m-1}.
  \end{align*}
  
  We claim that the resulting sequence $(\mathcal{S}_{\beta})_{\beta < \alpha\cdot\omega}$ is a witnessing sequence relative to $v$.  Conditions (\ref{0_c}) is immediate. Condition (\ref{definability_c}) is easily shown by iteratively applying Corollary \ref{ComposingSetsOfAut}. 
  
  Let us now check Condition (\ref{succesor_c}). Take $\beta < \alpha \cdot \omega$, where $\beta = \alpha \cdot m + \delta$. 
  
  We apply Condition (\ref{succesor_c}) of the witnessing sequence for $\delta < \alpha$: this yields a sequence $(\sigma^{\delta}_n)_{n\in\omega}\subset \mathcal{K}$ such that $a)$ the sets $\{\sigma^{\delta}_n \circ {\mathcal S}_{\delta}\}_{n \in \N}$ are pairwise disjoint; $b)$ $\sigma^{\delta}_{n}\circ\mathcal{S}_{\delta}\subseteq \mathcal{S}_{\delta +1} $ for any $n\in\N$ and $c)$ $\sigma^{\delta}_n \circ (\sigma^{\delta}_0)^{-1} \circ \mathcal{S}_{\delta+1} = \mathcal{S}_{\delta+1}$ for all $n \in \N$.
  
  Suppose first $\delta \neq 0$: then note that the sets $\sigma^{\delta}_n \circ \mathcal{S}_{\beta} = \sigma^{\delta}_n \circ \mathcal{S}_{\delta} \circ (\tau \circ \mathcal{K})^m$ are pairwise disjoint by condition $a)$ and uniqueness of the normal form in a free product, they are subsets of $\mathcal{S}_{\delta +1} \circ (\tau \circ \mathcal{K})^m = \mathcal{S}_{\beta+1}$ by $b)$ above, and $\sigma^{\delta}_n \circ (\sigma^{\delta}_0)^{-1} \circ \mathcal{S}_{\beta +1} = \sigma^{\delta}_n \circ (\sigma^{\delta}_0)^{-1} \circ \mathcal{S}_{\delta +1} \circ (\tau \circ \mathcal{K})^m = \mathcal{S}_{\beta +1} $. Thus if we set $\sigma^{\beta}_n = \sigma^{\delta}_n$ we see that Condition (\ref{succesor_c}) is satisfied. 
  
  If on the other hand $\delta =0$, we set $\sigma^{\beta}_n = \sigma^0_n \circ \h$: then the sets $\sigma^{\beta}_n \circ \mathcal{S}_{\beta} = \sigma^{0}_n \circ \tau \circ \mathcal{K} \circ (\tau \circ \mathcal{K})^{m-1}$ are pairwise disjoint by the uniqueness of the normal form in a free product, they are subsets of $\mathcal{S}_1 \circ (\tau \circ \mathcal{K})^m = \mathcal{S}_{\beta+1}$ by $b)$ above, and $\sigma^{\beta}_n \circ (\sigma^{\beta}_0)^{-1} \circ \mathcal{S}_{\beta +1} = \sigma^{0}_n \circ \h \circ \h^{-1} \circ (\sigma^{0}_0)^{-1} \circ \mathcal{S}_{1} \circ (\tau \circ \mathcal{K})^m = \mathcal{S}_{\beta +1} $ since $\sigma^{0}_n \circ (\sigma^{0}_0)^{-1} \circ \mathcal{S}_{1} = \mathcal{S}_{1}$ by condition (\ref{succesor_c}) for $\delta=0$. Thus with our choice of $\sigma^{\beta}_n$ we see that Condition (\ref{succesor_c}) is satisfied for $\beta$. 
 
 Finally we check condition (\ref{inclusion_c}). Suppose thus that $\beta$ is a limit ordinal with $\beta < \alpha \cdot \omega$. There are two cases: either $\beta = \alpha \cdot m + \delta$ for some $m < \omega$ and $\delta$ a limit ordinal with $\delta < \alpha$, or $\beta = \alpha \cdot m$. 
 
 If $\beta = \alpha \cdot m + \delta$, then Condition (\ref{inclusion_c}) applied to $\delta$ yields a sequence $\delta_k \to \delta$ with $\delta_k < \delta$ such that  $\mathcal{S}_{\delta_k} \subseteq \mathcal{S}_{\delta}$. Now the sequence $\beta_k = \alpha \cdot m + \delta_k$ tends to $\beta$ from below as $k$ goes to infinity, and 
 $$\mathcal{S}_{\beta_k} = \mathcal{S}_{\delta_k} \circ (\tau \circ \mathcal{K})^m  \subseteq  \mathcal{S}_{\delta} \circ (\tau \circ \mathcal{K})^m = \mathcal{S}_{\beta}.$$
 
 If $\beta = \alpha \cdot m$, we can take any sequence $\alpha_k  \to \alpha$ with $\alpha_k < \alpha$, and we know that $\mathcal{S}_{\alpha_k} \subseteq \mathcal{K} = \mathcal{S}_{\alpha}$. Then let $\beta_k = \alpha \cdot (m-1) + \alpha_k$, we have that $\beta_k$ tends to $\beta$ from below as $k$ goes to infinity, and 
 $$ \mathcal{S}_{\beta_k} = \mathcal{S}_{\alpha_k} \circ (\h \circ \mathcal{K})^{m-1} \subseteq \mathcal{K} \circ (\h \circ \mathcal{K})^{m-1} = \mathcal{S}_{\beta}$$

The sequence we produced is thus indeed a witnessing sequence for $v$.
  \end{proof}

  We now want an iterated version of this lemma
  \begin{corollary}
  \label{rank times omega iterated} We fix a subset $A$ of parameters in a model $M$, and $v$ a tuple in $M$. Let  $\GG \leq \Aut_{A}(M)$. 
 
  Let $\mathcal{K}_0 =\{\id_M\} \leq \ldots \leq \mathcal{K}_m =\GG\leq Aut_A(M)$ be a sequence of subgroups such that the sets $\mathcal{K}_j \cdot v$ are definable over $v$ for $0\leq j<m$.
  
  Suppose that for any $j$, there exists some element $\h\in\mathcal{K}_{j+1}\setminus\{1\}$ such that $\tau(v)$ is definable over $v$ and the subgroup of $\GG$ generated by $\mathcal{K}_j$ and $\h$ is isomorphic to the free product $\subg{\h}\frp\mathcal{K}_j$
  
  Then there exists a witnessing sequence in $\GG$ of length $\omega^m$ relative to $v$.
  \end{corollary}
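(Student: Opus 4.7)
The natural plan is a straightforward induction on $m$ with the inductive step a single application of Lemma \ref{rank times omega}. For the base case $m = 0$, take $(\mathcal{S}_0) = (\{\id_M\})$: this trivially satisfies Definition \ref{WitnessingSequenceDef} at length $\omega^0 = 1$, with conditions (iii) and (iv) vacuous. For the inductive step $m-1 \to m$, I would first apply the induction hypothesis to the truncated chain $\mathcal{K}_0 \leq \cdots \leq \mathcal{K}_{m-1}$ -- which inherits all the needed hypotheses of the corollary at indices $j < m-1$ -- to obtain a witnessing sequence in $\mathcal{K}_{m-1}$ of length $\omega^{m-1}$ relative to $v$. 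I would then invoke Lemma \ref{rank times omega} with $\mathcal{K} = \mathcal{K}_{m-1}$, $\GG = \mathcal{K}_m$, $\alpha = \omega^{m-1}$, and $\tau = \tau_{m-1}$ the element supplied by the corollary's hypothesis at $j = m-1$. Its premises are exactly what the hypothesis at $j = m-1$ provides, together with the definability of $\mathcal{K}_{m-1} \cdot v$ over $v$ (which holds since $m-1 < m$). The lemma then outputs a witnessing sequence in $\mathcal{K}_m = \GG$ of length $\omega^{m-1} \cdot \omega = \omega^m$, closing the induction.

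The delicate point, and the one I expect to be the main obstacle, is the very first application of the lemma, where $\alpha = 1$ and $\mathcal{K} = \{\id_M\}$ is trivial. The construction in the proof of Lemma \ref{rank times omega} draws its witnessing automorphisms $\sigma_n^\delta$ from condition (iii) of the input witnessing sequence, which is vacuous at length $1$; moreover the sets $\mathcal{K} \circ (\tau\mathcal{K})^{m-1}$ prescribed by that construction degenerate to singletons in this boundary case. I would handle this either by checking that the lemma remains correct after choosing $\sigma_n^0 = \tau_0^n$ explicitly and enlarging $\mathcal{S}_1$ to an infinite subset of $\langle \tau_0 \rangle \subseteq \mathcal{K}_1$ whose $v$-orbit is still definable over $v$, or, more safely, by treating $m = 1$ as the actual base case and constructing the length-$\omega$ witnessing sequence in $\mathcal{K}_1$ directly using powers of $\tau_0$. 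Every subsequent iteration of the lemma has $\alpha \geq \omega$ and proceeds without incident.
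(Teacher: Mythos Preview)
Your approach is essentially identical to the paper's: both argue by induction on the length of the chain, with the inductive step a single invocation of Lemma~\ref{rank times omega} taking $\mathcal{K}=\mathcal{K}_l$ and $\GG=\mathcal{K}_{l+1}$. The paper's proof is in fact terser than yours---it simply declares the case $l=0$ ``immediate'' and does not comment on the boundary issue you raise.

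Your caution about that boundary case is well placed. Definition~\ref{WitnessingSequenceDef} is stated only for limit ordinals $\alpha$, and the proof of Lemma~\ref{rank times omega} does indeed extract $\sigma_n^{0}$ from condition~(iii) of the input sequence, which refers to a set $\mathcal{S}_1$ that does not exist when $\alpha=1$. The paper silently passes over this. Your second proposed fix---taking $m=1$ as the true base case and building the length-$\omega$ sequence directly from powers of $\tau_0$---is the clean way to proceed; one sets, for instance, $\mathcal{S}_n=\{\tau_0^{0},\ldots,\tau_0^{n-1}\}$ (or even $\mathcal{S}_n = \{\tau_0^{j} : j \in \Z\}$ for $n\geq 1$) and checks the conditions by hand using Corollary~\ref{ComposingSetsOfAut} for definability. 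Note that this requires $\tau_0$ to have infinite order, which the abstract hypothesis at $j=0$ does not literally force (since $\langle\tau\rangle\frp\{1\}\cong\langle\tau\rangle$ is automatic); in the intended application, however, $\tau_0$ lies in the mapping class group of a surface with boundary, which is torsion-free, so no difficulty arises.
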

  
  \begin{proof} We prove by induction on $l$ that there is a witnessing sequence $(\mathcal{S}_{\beta})_{\beta < \omega^l}$ of length $l$. For $l=0$ this is immediate. Suppose it is true for $l<m-1$, let us prove it for $l+1$: we apply Lemma \ref{rank times omega} with $\mathcal{K} = \mathcal{K}_l$, $\mathcal{G} = \mathcal{K}_{l+1}$. 
  \end{proof}

\section{Proof of the main results}\label{LoBo}
  
  To prove the main results, which give a lower bound on the Shelah rank of hyperbolic tower varieties, we will use the following proposition, which will be proved in Section \ref{FreeProductSec}. 
  
    \begin{proposition}
  	\label{FreeProduct} Let $\Sigma_1$ be an orientable compact surface with $b\geq 1$ boundary components and genus $g$, where $3g+b\geq 4$. 
  	
  	Fix a boundary subgroup $B$ of $\pi_1(\Sigma_1)$ and let $H_{\Sigma_1}$ be the subgroup of $Aut(\pi_1(\Sigma_{1}))$ consisting of automorphisms which restrict to the identity on $B$ and to a conjugation on the other boundary components. 
  	
  	Let $\Sigma_0$ be a non-empty proper subsurface of $\Sigma_{1}$ such that $\Sigma_1 \setminus \Sigma_0$ is connected and not parallel to a boundary component. Let $H_{\Sigma_0}$ be the subgroup of  $H_{\Sigma_1}$ defined as in Example \ref{EmbeddingsHSigma}.
   	
  	Then there exists an element $\tau \in H_{\Sigma_1}$ such that $\langle H_{\Sigma_0}, \tau \rangle= H_{\Sigma_0} * \langle \tau \rangle$.
  \end{proposition}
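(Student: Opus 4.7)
The plan is to construct $\tau$ as a sufficiently high power of a pseudo-Anosov-type element in $H_{\Sigma_{1}}$, and then to verify the free product structure by a ping-pong argument on the action of $H_{\Sigma_{1}}$ on the curve complex $\mathcal{C}(\Sigma_{1})$, which under the complexity hypothesis $3g+b\geq 4$ is a Gromov-hyperbolic space of infinite diameter.

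First, I would identify $H_{\Sigma_{1}}$ with (a subgroup of) the mapping class group of $\Sigma_{1}$ fixing $B$ pointwise and permitting boundary twists on the remaining boundary components. Under this identification $H_{\Sigma_{0}}$ corresponds to the mapping classes supported on $\Sigma_{0}$, and the key structural observation is that every element of $H_{\Sigma_{0}}$ fixes pointwise the simplex $\sigma_{0}\subset\mathcal{C}(\Sigma_{1})$ spanned by the components of $\partial\Sigma_{0}$ that are non-peripheral in $\Sigma_{1}$, as well as the isotopy class of every simple closed curve supported in $\Sigma_{1}\setminus\Sigma_{0}$. The complexity hypothesis together with the standard existence results for pseudo-Anosov maps on surfaces with boundary then produce a pseudo-Anosov $\tau_{0}\in H_{\Sigma_{1}}$ (possibly after composing with a power of the Dehn twist along $B$ to enforce the pointwise-fixing condition along $B$).

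Setting $\tau=\tau_{0}^{N}$ for $N$ large, the element $\tau$ acts as a loxodromic isometry of $\mathcal{C}(\Sigma_{1})$ with north-south dynamics and filling attracting/repelling laminations $\Lambda^{+},\Lambda^{-}$, which in particular are not fixed by any mapping class reducible on a proper subsurface, and so lie far from the fixed set of $H_{\Sigma_{0}}$. I would then choose disjoint neighborhoods $U^{+},U^{-}$ of $\Lambda^{+},\Lambda^{-}$ in the compactified complex, both disjoint from a neighborhood of $\sigma_{0}$, so that for $N$ large enough $\tau^{\pm k}$ sends the complement of $U^{\mp}$ into $U^{\pm}$ for all $k\geq 1$. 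Evaluating an arbitrary reduced word $w=\tau^{n_{1}}h_{1}\tau^{n_{2}}h_{2}\cdots$ in $\langle H_{\Sigma_{0}},\tau\rangle$ on a base curve $\gamma_{0}$ chosen deep in $U^{+}$ and reading from right to left, a classical ping-pong bookkeeping then gives $w(\gamma_{0})\neq\gamma_{0}$, forcing $w\neq\id$ and yielding the desired free product.

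The hard part will be controlling the action of the infinite subgroup $H_{\Sigma_{0}}$ on $\mathcal{C}(\Sigma_{1})$: although $H_{\Sigma_{0}}$ fixes $\sigma_{0}$, it contains pseudo-Anosov mapping classes of the subsurface $\Sigma_{0}$, whose orbits on $\mathcal{C}(\Sigma_{1})$ are unbounded, so no fixed choice of pong-sets can possibly be stable under all of $H_{\Sigma_{0}}$. The natural remedy is to work with Masur-Minsky subsurface projections: the projection to any subsurface of $\Sigma_{1}\setminus\Sigma_{0}$ is $H_{\Sigma_{0}}$-invariant, while the hypothesis that $\Sigma_{1}\setminus\Sigma_{0}$ is connected and not parallel to a boundary component is exactly what provides enough room in the complement to pick $\tau_{0}$ whose iterates move such projections by an arbitrarily large amount. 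Combining Behrstock-type inequalities with the north-south dynamics of $\tau$ should turn the sketch above into a uniform ping-pong argument valid for all elements of $H_{\Sigma_{0}}$ at once, and additionally accommodate the inner automorphisms in $H_{\Sigma_{0}}$ (which act trivially on $\mathcal{C}(\Sigma_{1})$ and thus need to be detected by a supplementary argument on the action on $\pi_{1}(\Sigma_{1})$ itself).
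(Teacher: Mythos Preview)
Your overall framework --- pass to the mapping class group, act on the curve complex $\mathcal{C}(\Sigma_1)$, take a pseudo-Anosov $\gamma$, and run a ping-pong against $H_{\Sigma_0}$ --- is exactly what the paper does, and you correctly isolate the genuine difficulty: $H_{\Sigma_0}$ is infinite with unbounded orbits on $\mathcal{C}$, so a naive choice of ping-pong sets in $\mathcal{C}$ cannot work. However, your proposed resolution via subsurface projections and Behrstock inequalities is not what the paper uses, and as sketched it is not clear how it produces actual ping-pong sets; you would still need to turn ``$\pi_Y$ is $H_{\Sigma_0}$-invariant'' into ``every nontrivial $h\in H_{\Sigma_0}$ sends one fixed set into another'', which does not follow from the projection machinery alone.

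The paper's resolution is different and cleaner, and hinges on two observations you are missing. First, the natural map $\phi:H_{\Sigma_1}\to\Out^*(\pi_1(\Sigma_1))\cong\Mod(\Sigma_1)$ is \emph{injective} on $H_{\Sigma_0}$, because every element of $H_{\Sigma_0}$ fixes the subgroup $\pi_1(\Sigma_1\setminus\Sigma_0)$, which has trivial centralizer; this dissolves your concern about inner automorphisms at the outset, so one can work entirely inside $G=\Mod(\Sigma_1)$ with $G_0=\phi(H_{\Sigma_0})$. Second, and this is the key point, the ping-pong is played on the Gromov boundary $\partial\mathcal{C}$ rather than on $\mathcal{C}$ itself. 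Since $G_0$ fixes a vertex $v\in\mathcal{C}$, it acts by \emph{isometries} on $\partial\mathcal{C}$ for the visual metric based at $v$. The paper then uses the WPD property of pseudo-Anosov elements, together with torsion-freeness of $\Mod(\Sigma_1)$, to show that no nontrivial $h\in G_0$ can preserve the pair $\{e^+,e^-\}$ of endpoints of the axis of $\gamma$; a dichotomy argument upgrades this to a uniform statement: there are fixed neighbourhoods $U^{\pm}$ of $e^{\pm}$ with $\bar h\cdot\{e^+,e^-\}\cap(U^+\cup U^-)=\emptyset$ for all $h\in G_0\setminus\{1\}$. Because $G_0$ acts isometrically on $\partial\mathcal{C}$ and $U^{\pm}$ can be taken to be metric balls, halving their radii yields $\bar h(U^+\cup U^-)\cap(U^+\cup U^-)=\emptyset$ for all such $h$, which is precisely the pong half you were unable to secure. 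North--south dynamics of a high power $\gamma^N$ gives the ping half, and the free product follows.
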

  
  \subsection{Proof of Theorem \ref{MainResultOneFloor}}

    \begin{lemma} \label{SequenceFreeProducts} Let $\Sigma$ be a closed surface with boundary, let $B$ be a boundary subgroup of $\pi_1(\Sigma)$ and suppose $m= -\chi(\Sigma)$ satisfies $m \geq 2$, or $m=1$ and $\Sigma$ is a once punctured torus. Let $H_{\Sigma} = H_{\Sigma, B}$ be the group of automorphisms of $\pi_1(\Sigma)$ which fix $B$ and preserve the conjugacy class of the other boundary subgroups as in Definition \ref{HSigmaDef}. Let $v$ be a generating tuple for $\pi_1(\Sigma)$.
    
    Then there exists a sequence $H_0=\{1\} \leq H_1 \leq \ldots \leq H_{m}= H_{\Sigma}$ such that
    for each $i$, there is an element $\tau_i \in H_i$ for which the subgroup generated by $H_{i-1}$ and $\tau_i$ is isomorphic to the free product $H_{i-1} \frp \langle \tau_i \rangle$.
    \end{lemma}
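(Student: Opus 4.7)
The plan is to construct a chain of subsurfaces of $\Sigma$ that grows by one pair of pants at each step, set $H_i$ to be $H_{\tilde{\Sigma}_i}$, and apply Proposition \ref{FreeProduct} iteratively. If $m = 1$, the hypothesis forces $\Sigma$ to be the once-punctured torus; in this case $H_\Sigma$ contains elements of infinite order (for instance, the inner automorphism $\iota_B$, which is of infinite order because $B$ is a non-trivial element of the centerless free group $\pi_1(\Sigma)$), so taking $\tau_1$ to be any such element and setting $H_1 = H_\Sigma$ trivially gives $\langle H_0, \tau_1\rangle \cong H_0 \frp \langle \tau_1 \rangle$.

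For $m \geq 2$, I would first choose a pants decomposition $P_1, \ldots, P_m$ of $\Sigma$ with $B \subset \partial P_1$, and order the pants so that each $P_k$ with $k \geq 2$ shares at least one boundary circle with $P_1 \cup \ldots \cup P_{k-1}$, which is always possible by picking a spanning tree in the dual graph of the pants decomposition. Setting $\tilde{\Sigma}_k := P_1 \cup \ldots \cup P_k$ gives a connected compact subsurface of $\Sigma$ that contains $B$ as a boundary component and satisfies $-\chi(\tilde{\Sigma}_k) = k$, with $\tilde{\Sigma}_m = \Sigma$. I then let $H_k := H_{\tilde{\Sigma}_k, B}$ viewed as a subgroup of $H_\Sigma$ via the chain of embeddings given by Example \ref{EmbeddingsHSigma}(1); this yields $H_0 = \{1\} \leq H_1 \leq \ldots \leq H_m = H_\Sigma$. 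For the base case $k = 1$, take $\tau_1 := \iota_B$, which lies in $H_1 = H_{P_1, B}$ and is of infinite order, so the required free product relation against the trivial group $H_0$ is automatic.

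For each $k \geq 2$, I apply Proposition \ref{FreeProduct} with $\Sigma_1 = \tilde{\Sigma}_k$ and $\Sigma_0 = \tilde{\Sigma}_{k-1}$. Writing $\tilde{\Sigma}_k$ with genus $g_k$ and $b_k$ boundary components, the identity $2g_k + b_k = k + 2$ gives $3g_k + b_k \geq k + 2 \geq 4$; the subsurface $\tilde{\Sigma}_{k-1}$ is non-empty and proper in $\tilde{\Sigma}_k$; and the complement $\tilde{\Sigma}_k \setminus \tilde{\Sigma}_{k-1}$ is (isotopic to) the single pair of pants $P_k$, which is connected and, not being an annulus, cannot be parallel to a boundary component of $\tilde{\Sigma}_k$. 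The proposition then produces $\tau_k \in H_k$ with $\langle H_{k-1}, \tau_k \rangle \cong H_{k-1} \frp \langle \tau_k \rangle$ inside $H_{\tilde{\Sigma}_k}$.

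The main subtlety I anticipate is checking the compatibility of the nested embeddings $H_{\tilde{\Sigma}_{k-1}} \hookrightarrow H_{\tilde{\Sigma}_k} \hookrightarrow H_\Sigma$, so that the free product relation established inside $H_{\tilde{\Sigma}_k}$ is preserved under passage to $H_\Sigma$. This should follow from the functoriality of the extension procedure of Lemma \ref{VertexAutomorphisms}, applied to the fact that the graph-of-groups decomposition of $\pi_1(\Sigma)$ dual to $\partial \tilde{\Sigma}_{k-1}$ is obtained by refining the decomposition dual to $\partial \tilde{\Sigma}_k$ at the vertex corresponding to $\tilde{\Sigma}_k$.
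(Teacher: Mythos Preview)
Your approach is essentially the same as the paper's: build a nested chain of subsurfaces whose Euler characteristic drops by one at each step, set $H_i$ to be the corresponding $H_{\Sigma_i}$, and apply Proposition~\ref{FreeProduct} iteratively. The paper starts the chain at an annulus $\Sigma_0$ (so $H_0=\{1\}$) and lets the distinguished boundary subgroup $B_i$ vary from level to level, whereas you use a pants decomposition rooted at $B$ and keep the same boundary subgroup throughout, treating the base step $k=1$ by hand with $\tau_1=\iota_B$; these are cosmetic variations of the same argument.
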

    
    \begin{proof}
    We pick a sequence of subsurfaces $\Sigma_0 \subset \Sigma_1 \subset \ldots \subset \Sigma_m = \Sigma$ such that
    \begin{enumerate}
    	\item $\Sigma_0$ is an annulus
    	\item $\chi(\Sigma_{i}) = -i$;
    	\item for each $i$, the complement of $\Sigma_i$ in $\Sigma_{i+1}$ is connected and not parallel to a boundary component.
    \end{enumerate}
 
    For each $i<m$, we pick a boundary subgroup $B_i$ of $\pi_1(\Sigma_i)$ which is not a boundary subgroup of $\Sigma_{i+1}$. We set $H_i =H_{\Sigma_i, B_i}$: by Example \ref{EmbeddingsHSigma}, it embeds in $H_{\Sigma_{i+1}, B_{i+1}}$. 
    By Proposition \ref{FreeProduct}, there exists elements $\tau_{i} \in H_{\Sigma_{i}}$ satisfying the required condition.
    \end{proof}
    
    We are now almost ready to prove Theorem \ref{MainResultOneFloor}. It will follow from 
    \begin{proposition} \label{OneExtraFloor} Let $T$ be a minimal rank hyperbolic tower over $\F(a)$, whose top floor $(T, T', \Lambda, r)$ has a unique surface $\Sigma$. Fix parameter set $A=T'$ in $T$, and denote by ${\cal G}$ the group $\Aut_{T'}(T)$.
    
    Let $s$ be any finite generating tuple of $T'$, extend it to a generating tuple $t,s$ for $T$. Then there is a witnessing sequence in ${\cal G}$ of length $\omega^{-\chi(\Sigma)}$ relative to $(t,s)$. 
    \end{proposition}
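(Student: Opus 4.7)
The plan is to apply Corollary \ref{rank times omega iterated} to a suitable chain of subgroups of $\GG = \Aut_{T'}(T)$. Set $m = -\chi(\Sigma)$. First I would invoke Lemma \ref{SequenceFreeProducts}, applied to $\Sigma$ with any chosen boundary subgroup $B$, to obtain a nested sequence of subgroups $H_0 = \{1\} \leq H_1 \leq \cdots \leq H_m = H_\Sigma$ of $\Aut(\pi_1(\Sigma))$, together with elements $\tau_i \in H_i$ satisfying $\langle H_{i-1}, \tau_i \rangle = H_{i-1} \frp \langle \tau_i \rangle$. Each $H_i$ corresponds to a subsurface $\Sigma_i \subseteq \Sigma$ with $\chi(\Sigma_i) = -i$, so for $i < m$ the surface $\Sigma_i$ is a proper subsurface of $\Sigma$.

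Next I would promote each $H_i$ to a subgroup of $\GG$ using the embedding from Example \ref{EmbeddingsHSigma}(2), which identifies $H_i$ with a subgroup of $\Aut_{A_{\Sigma_i}}(T)$. The key observation is that since the floor decomposition $\Lambda$ has a unique non-surface vertex with vertex group $T'$, and since $A_{\Sigma_i}$ in the refinement $\Gamma_{\Sigma_i}$ is obtained by amalgamating $T'$ with the complement-of-$\Sigma_i$ piece of $\pi_1(\Sigma)$ along a boundary subgroup, we have $T' \leq A_{\Sigma_i}$. Hence $\Aut_{A_{\Sigma_i}}(T) \leq \Aut_{T'}(T) = \GG$. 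I set $\mathcal{K}_i$ equal to the image of $H_i$ in $\GG$ for $0 \leq i < m$, and $\mathcal{K}_m = \GG$ itself (which contains the image of $H_m = H_\Sigma$). This yields a chain $\mathcal{K}_0 = \{\id_T\} \leq \mathcal{K}_1 \leq \cdots \leq \mathcal{K}_m = \GG$.

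It then suffices to verify the two hypotheses of Corollary \ref{rank times omega iterated}. For the definability of $\mathcal{K}_j \cdot v$ over $v = (t,s)$ when $0 \leq j < m$: the case $j = 0$ is trivial since $\mathcal{K}_0 \cdot v = \{v\}$. For $1 \leq j < m$, Proposition \ref{DefinabilityOfOrbits} applies because $T$ is minimal rank with top floor having unique surface $\Sigma$ and $\Sigma_j$ is a proper subsurface; this gives that $\mathcal{K}_j \cdot v = H_{\Sigma_j} \cdot v$ is definable over $A_{\Sigma_j}$, and since $v$ generates $T$ every element of $A_{\Sigma_j}$ is a specific word in $v$, so the orbit is definable over $v$. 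For the free product condition at level $j$, the image of $\tau_{j+1} \in H_{j+1}$ lies in $\mathcal{K}_{j+1}$, the singleton $\{\tau_{j+1}(v)\}$ is trivially definable over $v$ (it is a tuple of specific words in $v$), and the property $\langle \mathcal{K}_j, \tau_{j+1} \rangle = \mathcal{K}_j \frp \langle \tau_{j+1} \rangle$ is inherited from Lemma \ref{SequenceFreeProducts} since the embeddings into $\GG$ are injective.

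Corollary \ref{rank times omega iterated} then produces a witnessing sequence in $\GG$ of length $\omega^m = \omega^{-\chi(\Sigma)}$ relative to $v$. I expect no substantive obstacle: the proof is essentially an assembly of earlier results, with the only delicate points being (i) verifying that the embeddings of Example \ref{EmbeddingsHSigma}(2) actually land in $\Aut_{T'}(T)$, which as explained above reduces to the inclusion $T' \leq A_{\Sigma_i}$ coming from the structure of the connected top floor, and (ii) checking that Proposition \ref{DefinabilityOfOrbits} applies, which rests on the concreteness of $T$ over $A_{\Sigma_i}$ guaranteed by Lemma \ref{MinimalRankOnlyOneFloorStructure} under the minimal rank hypothesis.
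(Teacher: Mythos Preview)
Your proposal is correct and follows essentially the same route as the paper: invoke Lemma \ref{SequenceFreeProducts} to get the chain $H_0 \leq \ldots \leq H_m = H_\Sigma$, embed it into $\Aut_{T'}(T)$ via Example \ref{EmbeddingsHSigma}(2), check definability of the orbits via Proposition \ref{DefinabilityOfOrbits}, and apply Corollary \ref{rank times omega iterated}. You are in fact more explicit than the paper on a couple of points the paper leaves implicit --- notably that definability over $A_{\Sigma_j}$ upgrades to definability over $v$ since $v$ generates $T$, and that $\tau_{j+1}(v)$ is trivially definable over $v$; one small caveat is that the boundary subgroup $B$ should not be chosen arbitrarily but as $\pi_1(\Sigma)\cap T'$, so that $H_{\Sigma,B}$ actually embeds into $\Aut_{T'}(T)$.
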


    \begin{proof}  Pick a vertex group $\pi_1(\Sigma)$ of $\Lambda$ corresponding to $\Sigma$ such that $\pi_1(\Sigma) \cap T'$ is a boundary subgroup $B$ of the surface group. 
    
    Let $H_{\Sigma}$ be the group of automorphisms of $\pi_1(\Sigma)$ fixing $B$ defined in \ref{HSigmaDef}. It embeds naturally in $\Aut_{T'}(T)$ (see Example \ref{EmbeddingsHSigma}(2)).
     We want to apply Corollary \ref{rank times omega iterated} to the sequence $H_0=\{1\} \leq H_1 \leq \ldots \leq H_{m}= H_{\Sigma}$ given by Lemma \ref{SequenceFreeProducts} above, with
     \begin{itemize}
     \item $A = T'$;
     \item $M=T$ - this is a hyperbolic tower, so it is a model of the theory of free groups;
     \item $v=(t,s)$ which lies in $T$;
     \end{itemize}
    The sets $H_{\Sigma_i} \cdot v$ are all definable over $v$ for $i<m$  by Proposition \ref{DefinabilityOfOrbits}. This proves the result.
    \end{proof} 
    
    We get the following 
    \begin{corollary} \label{OneFloorFiber} Let $T'$ be a hyperbolic tower over $\F(a)$. Suppose $T$ is a hyperbolic tower with one floor over $T'$ with a single surface. Let $(t,s,a)$ be any generating set for $T$ such that $(s,a)$ generates $T'$, and choose corresponding finite presentations $\langle t,s,a \mid \Sigma_{T}(t,s,a) \rangle$ and $\langle s,a \mid \Sigma_{T'}(s,a)\rangle$ for $T$ and $T'$ respectively. Denote by $V^T_{T'}$ the set defined by $\Sigma_T(y,x,a) \wedge x=s$. 
    
    Then $\SR_{T'}(V^T_{T'}) \geq \omega^{-\chi(\Sigma)}$.
    \end{corollary}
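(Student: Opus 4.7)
The plan is to derive the corollary directly from Proposition \ref{OneExtraFloor} combined with Lemma \ref{WitnessingSequence}, applied to the singleton $X = \{v\}$ with $v = (t,s)$.

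First, I would regard $T$ as a one-floor hyperbolic tower over the base $T'$, with $T'$ playing the role that $\F(a)$ plays in the statement of Proposition \ref{OneExtraFloor}. This substitution is legitimate because $T'$, being a non-abelian hyperbolic tower over $\F(a)$, is itself a model of the theory of free groups, so the ambient model-theoretic setup of Section \ref{FP&SR} is available with $M = T$ and $A = T'$. Proposition \ref{OneExtraFloor} then produces a witnessing sequence $(\mathcal{S}_{\beta})_{\beta < \omega^{-\chi(\Sigma)}}$ in $\GG := \Aut_{T'}(T)$ relative to $v$.

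Two small verifications then feed this sequence into Lemma \ref{WitnessingSequence}. The set $X = \{v\} \subset T^{|t|+|s|}$ is trivially definable over $v$. For any $\sigma \in \GG$, the elements $s$ and $a$ are fixed (they lie in $T'$), while $\sigma(t) \in T$ automatically satisfies $\Sigma_T(\sigma(t), s, a) = 1$ by functoriality of the presentation, so $\sigma(X) = \{(\sigma(t), s)\} \subseteq V^T_{T'}$, whence $\GG(X) \subseteq V^T_{T'}$. Moreover the action of $\GG$ on $X$ is free: if $\sigma, \sigma' \in \GG$ agree on $v$ then $\sigma'^{-1}\sigma$ fixes each of $t$, $s$, $a$, hence fixes a generating set of $T$, hence is the identity. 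Lemma \ref{WitnessingSequence} applied with $Y = V^T_{T'}$ now yields
$$\SR_{T'}(V^T_{T'}) \;\geq\; \SR_{T'}(X) + \omega^{-\chi(\Sigma)} \;\geq\; \omega^{-\chi(\Sigma)},$$
which is the required lower bound.

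No step is genuinely difficult; the argument is mostly assembly, since all the real work lives inside Proposition \ref{OneExtraFloor}. The one point that requires some care is the invocation of that proposition in the current context: it was stated for a minimal rank tower over $\F(a)$, so one must verify that $T$ qualifies as such a tower over $T'$. This is where the hypothesis that the (single) floor of $T$ over $T'$ has a unique surface is used, via Lemmas \ref{MinRankTowerProperties} and \ref{MinimalRankOnlyOneFloorStructure}, which together identify the top floor as a setting in which Proposition \ref{DefinabilityOfOrbits} (and hence Proposition \ref{OneExtraFloor}) applies.
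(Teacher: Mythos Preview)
Your overall strategy---obtain a witnessing sequence from Proposition \ref{OneExtraFloor} and feed it into Lemma \ref{WitnessingSequence} with $X$ the singleton $\{(t,s)\}$---is exactly the paper's, and your verification of the hypotheses of Lemma \ref{WitnessingSequence} (definability of $X$, containment $\GG(X)\subseteq V^{T}_{T'}$, freeness of the $\GG$-action on $X$) is correct.

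The gap is in your final paragraph, where you address the minimal rank hypothesis of Proposition \ref{OneExtraFloor}. You invoke Lemmas \ref{MinRankTowerProperties} and \ref{MinimalRankOnlyOneFloorStructure} as though they \emph{establish} that $T$ has minimal rank, but both of those results take minimal rank as a \emph{hypothesis} and derive structural consequences from it; they say nothing in the converse direction. In fact a one-floor, one-surface tower over $T'$ need not have minimal rank: for instance the retraction $r$ may kill an element corresponding to a simple closed curve on $\Sigma$, and then $T$ fails the criterion of Remark \ref{EquivalentDefMinimalRank}. In that situation Proposition \ref{DefinabilityOfOrbits}, and hence Proposition \ref{OneExtraFloor}, is genuinely unavailable.

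The paper handles this by a dichotomy you omitted: if $T$ does \emph{not} have minimal rank (as a tower over $T'$), Proposition \ref{SuperstableDoesNotExpand} gives $\SR(V^{T}_{T'})=\infty$, which already dominates $\omega^{-\chi(\Sigma)}$; only after disposing of this case may one assume minimal rank and invoke Proposition \ref{OneExtraFloor}. Insert this case split at the start of your argument and the rest of your proof goes through unchanged.
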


    \begin{proof}
    	By Proposition \ref{SuperstableDoesNotExpand}, if $T$ does not have minimal rank as a tower over $T'$, the result holds - we may thus assume that $T$ has minimal rank. Let $\langle t,a \mid \Sigma_T(t,a)\rangle$ be a finite presentation for $T$.
    	
    	The previous result gives that there is a witnessing sequence in ${\cal G} = \Aut_{T'}(T)$ of length $\omega^{-\chi(\Sigma)}$ relative to $(t,s,a)$.
    	
    	We then apply Lemma \ref{WitnessingSequence} with $X = \{(t,s,a)\}$ and $Y = V_{T'}^T$. Note that for any $h,g \in \Aut_{T'}(T)$, if $g(X) \cap h(X) \neq \emptyset$ we must have $g(t)=h(t)$ so $g=h$. We thus get $\SR_{T'}(V^T_{T'}) \geq \omega^{-\chi(\Sigma)}$. 
    \end{proof}
    
    Theorem \ref{MainResultOneFloor} follows immediately by applying the corollary with $T'=\F(a)$.
    
  \subsection{Proof of Theorem \ref{MainResultSeveralFloors}}

 To prove Theorem \ref{MainResultSeveralFloors}, we will proceed by induction on the number of floors.  Note that we may assume $T$ has minimal rank otherwise the result follows by \ref{SuperstableDoesNotExpand}.
 
  Recall that we want to show that if $T$ is a tower over $\F(a)$ with floor surfaces $\Sigma_1, \ldots, \Sigma_n$, the Shelah rank relative to  $\F(a)$ of $V^T$ is at least $\omega^{-\chi(\Sigma_n)} + \ldots + \omega^{-\chi(\Sigma_1)}$.
  
 We will in fact show by induction that if $T$ is an $n$ floor hyperbolic tower with floor surfaces $\Sigma_1, \ldots, \Sigma_n$ over a tower $T'$ over $\F(a)$, then $\SR_{T'}(V^T_{T'}) \geq \omega^{-\chi(\Sigma_n)} + \ldots + \omega^{-\chi(\Sigma_1)}$, where $V^T_{T'}$ is defined by $\Sigma_T(y,x,a)=1 \wedge x=s$ for some presentation $\langle t,s,a \mid \Sigma_T(t,s,a) \rangle$ of $T$ such that $s,a$ generates $T'$.
 
The case where $n=1$ is exactly Corollary \ref{OneFloorFiber}.
 
 Suppose we know that the result holds whenever $T$ has at most $n-1$ floors over $T'$. Let $T_0$ be a tower over $\F(a)$, and let $T_n$ be a minimal rank tower which is built over $T_0$ by adding $n$ floors, with corresponding surfaces $\Sigma_1, \ldots, \Sigma_n$. Choose a presentation $\langle s_n, \ldots, s_0, a \mid \Sigma_{T_n} (s_n, \ldots, s_0, a)\rangle$ of $T_n$ such that the intermediate towers have presentations of the form $\langle s_j, \ldots, s_0, a \mid \Sigma_{T_j} (s_j, \ldots, s_0, a)\rangle$.
 
Denote by $T_1$ the tower obtained from $T_0$ by adding the first floor (with surface $\Sigma_1$). By induction hypothesis we know that $\SR_{T_1}(V^{T_n}_{T_1}) \geq \omega^{-\chi(\Sigma_{n})} + \ldots + \omega^{-\chi(\Sigma_2)}$. Note that $V^{T_1}$ is a projection of $V^{T_n}$, that both are defined over $T_0$ only, and that the set $X_{(s_1, s_0, a)}$ defined by 
$$\phi(x_n, \ldots, x_1, x_0, a) := \Sigma_{T_n}(x_n, \ldots, x_1, x_0, a)=1 \wedge x_1 = s_1 \wedge x_0=s_0$$
(in other words the fiber of $V^{T_n}$ above $s_1 \in V_{T_1}$) is definably isomorphic over $T_0$ to the variety $V^{T_n}_{T_1}$. In particular it has Shelah rank relative to $T_1$ (and thus also relative to $T_0$) at least $\omega^{-\chi(\Sigma_{n})} + \ldots + \omega^{-\chi(\Sigma_2)}$.

Now we know by Proposition \ref{OneExtraFloor} that there is a witnessing sequence $\{{\cal S}_{\beta}\}_{\beta < \omega^{-\chi(\Sigma_1)}}$ in ${\cal G} = \Aut_{T_0}(T_1)$ relative to $(s_1, s_0, a)$ and for the parameter set $A = T_0$.

Now apply Lemma \ref{WitnessingSequence} with $X = X_{(s_1, s_0, a)}$ and $Y = V^{T_n}_{T_0}$. Note that for any $g, h \in {\cal G}$ we have that if $g(X) \cap h(X) \neq \emptyset$ then in fact $g(s_1)=h(s_1)$ so $g=h$, and $g(X)$ is defined by $\phi(x_n, \ldots, x_1, a) := \Sigma_{T_n}(x_n, \ldots, x_1, s_0, a)=1 \wedge x_1 = g(s_1)$ which is contained in $Y$. We get 
\begin{eqnarray*}
\SR_{T_0}(V_{T_0}^{T_n}) &\geq& \SR_{T_0}(X) + \omega^{-\chi(\Sigma_1)} \\
                         &\geq& \omega^{-\chi(\Sigma_{n})} + \ldots + \omega^{-\chi(\Sigma_2)} + \omega^{-\chi(\Sigma_1)}
\end{eqnarray*}
which finishes the proof.

\section{Finding a free product} \label{FreeProductSec}

  \subsection{Hyperbolic metric spaces}
    
    We will use standard properties and results about Gromov hyperbolic spaces and their isometries. We will give an overview of the properties that will be of use in the sequel, referring the reader to \cite{bridson2011metric} or \cite{ghys1990espaces} for a detailed account.
    
    For points $x, y, w$ of a metric space $(X, d)$, we define the Gromov product of $x, y$ relative to $w$ to be
    $$ (x,y)_{w} = \frac{d(x,w)+d(y,w)-d(x,y)}{2}$$
    
    We say that a metric space $(X,d)$ is hyperbolic (in the sense of Gromov) if there is some universal constant $\delta>0$  such that the following inequality holds for any $x,y,z,w\in X$
    \begin{align*}
    (x,z)_{w}\geq \min\{(x,y)_{w},(y,z)_{w}\}-\delta
    \end{align*}

    A metric space $(X,d)$ is geodesic if any two distinct points in $X$ can be joined by a geodesic segment. In particular, the geometric realization of a connected graph is a geodesic metric space.
    
    A useful property of geodesic hyperbolic spaces is the following
    \begin{lemma}\label{stability} Let $(X,d)$ be a geodesic hyperbolic metric space.
    There is a constant $Q$ such that for any geodesic segments $\sigma,\tau$ with endpoints $\{x_{0},x_{1}\}$ and $\{y_{0},y_{1}\}$ respectively, if $d(x_{i},y_{i})<L$ for $i\in\{0,1\}$, then $d_{H}(\sigma,\tau)<L+Q$.
    \end{lemma}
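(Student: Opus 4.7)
The plan is to derive stability from the thin triangles property, which is a standard consequence of the Gromov product form of hyperbolicity stated in the paper. Specifically, I would first recall (or quote from \cite{bridson2011metric} or \cite{ghys1990espaces}) that any geodesic $\delta$-hyperbolic space has geodesic triangles that are $\delta'$-slim for some universal $\delta' = O(\delta)$: each side of a geodesic triangle is contained in the $\delta'$-neighborhood of the union of the other two sides. This is where the underlying four-point condition gets used; nothing in the rest of the argument will depend directly on the Gromov product.

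Next, given the two geodesic segments $\sigma$ with endpoints $x_0, x_1$ and $\tau$ with endpoints $y_0, y_1$ satisfying $d(x_i, y_i) < L$, I would consider the geodesic quadrilateral with these four vertices and decompose it into two triangles using the diagonal geodesic $[x_0, y_1]$. Fix an arbitrary point $p \in \sigma$. Applying slimness to the triangle with sides $\sigma$, $[x_1, y_1]$ and $[x_0, y_1]$, the point $p$ lies within $\delta'$ of some $q \in [x_0, y_1] \cup [x_1, y_1]$. If $q \in [x_1, y_1]$, then since this side has length less than $L$, we get $d(p, y_1) \leq \delta' + L$, so $p$ is within $L + \delta'$ of $\tau$.

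If instead $q \in [x_0, y_1]$, I apply slimness to the second triangle, with sides $[x_0, y_0]$, $\tau$ and $[x_0, y_1]$: then $q$, and hence $p$ up to an extra $\delta'$, lies within $2\delta'$ of $[x_0, y_0] \cup \tau$. The side $[x_0, y_0]$ has length less than $L$, so in the worst case $p$ is within $L + 2\delta'$ of $y_0 \in \tau$. Either way $d(p, \tau) \leq L + 2\delta'$, and the symmetric argument works for points of $\tau$; so setting $Q = 2\delta'$ yields the Hausdorff bound $d_H(\sigma, \tau) < L + Q$.

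The argument is essentially routine, and the only point requiring mild care is to make sure the distance from $p$ to $\tau$ does not inflate into something like $2L + Q$: this is the reason for diagonalizing the quadrilateral rather than trying to go directly from $\sigma$ to $\tau$ via a single triangle. Provided the slim-triangles constant is invoked cleanly, the additive constant $Q$ depends only on the hyperbolicity constant of $X$ and not on $L$, which is what the statement requires.
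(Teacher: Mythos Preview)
Your argument is correct and is exactly the standard proof of this stability lemma via slim triangles; the quadrilateral-diagonal decomposition is the right way to keep the additive constant at $2\delta'$ rather than something involving $2L$. One tiny stylistic point: in the first case you could also note $d(p,\tau)\leq\delta'+L$ because $q$ itself is within $L$ of $y_1\in\tau$, but this is cosmetic.

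As for comparison with the paper: the paper does not give a proof of this lemma at all. It is stated without proof in Section~7.1 as one of the ``standard properties and results about Gromov hyperbolic spaces'' for which the reader is referred to \cite{bridson2011metric} and \cite{ghys1990espaces}. Your write-up is precisely the argument one finds in those references, so there is no divergence in approach to discuss; you have simply supplied the omitted proof.
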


    An important feature of a Gromov-hyperbolic space $X$ is the possibility of defining a metrizable topological space called boundary of $X$ at infinity, or $\partial X$. 
    
    Points of $\partial X$ are defined as equivalence classes of sequences of points $(x_{n})_{n \in \N}\subset X$ for which the value $(x_{m},x_{n})_{z}$ tends to infinity with $m$ and $n$ for some (any) $z\in X$. One may think of any such sequences as converging to the corresponding point in the boundary in a certain sense.
    Sequences $(x_{n})_{n\in \N}$ and $(y_{n})_{n \in \N}$ are in the same class precisely when
    \begin{align*}
    \lim_{n,m\to\infty}\,(x_{n},y_{m})_{z}=\infty
    \end{align*}
    for some (all) $z\in X$. Letting the constant $M$
    range in $\R_{+}$ in the expression
    \begin{align*}
    \setof{[(y_{n})_{n}]\in\partial X}{(x_{n},y_{m})_{z}\geq M}
    \end{align*}
    yields a system of neighbourhoods of $[(x_{n})_{n}]$ in $\partial X$.
    
    Any isometry $f$ of $X$ induces a homeomorphism $\bar{f}$ of $\partial X$. Moreover we have    
    \begin{lemma} \label{IsometryBoundary}
    Given a fixed $x\in X$ one can metrize $\partial X$ in a way that $\bar{f}$ is an isometry of $\partial X$ for any isometry $f$ fixing $x$.
    \end{lemma}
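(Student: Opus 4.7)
The plan is to construct the metric on $\partial X$ as a so-called \emph{visual metric} based at $x$, and then observe that such a metric is manifestly preserved by any isometry fixing $x$.

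First, I would extend the Gromov product to the boundary by setting, for $\xi,\eta\in\partial X$,
\[
(\xi,\eta)_{x} \;=\; \sup\,\liminf_{n,m\to\infty}(y_{n},z_{m})_{x},
\]
where the supremum is over sequences $(y_{n})$ and $(z_{m})$ representing $\xi$ and $\eta$ respectively. Using $\delta$-hyperbolicity one checks that this quantity is finite for $\xi\neq\eta$, infinite for $\xi=\eta$, and only weakly depends on the choice of representatives, in the sense that different choices change the value by at most an additive constant depending only on $\delta$.

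Next I would choose a parameter $\epsilon>0$ small enough (the standard requirement is $e^{\epsilon\delta}\leq \tfrac{\sqrt{2}}{2}+1$, say) and define a pre-metric by $\rho_{\epsilon}(\xi,\eta)=e^{-\epsilon(\xi,\eta)_{x}}$ for $\xi\neq\eta$ and $\rho_{\epsilon}(\xi,\xi)=0$. This is symmetric and separates points, but does not in general satisfy the triangle inequality, so I would pass to the \emph{chain metric}
\[
d_{\epsilon}(\xi,\eta) \;=\; \inf\Bigl\{\sum_{i=0}^{k-1}\rho_{\epsilon}(\xi_{i},\xi_{i+1})\,\Bigm|\,\xi_{0}=\xi,\ \xi_{k}=\eta\Bigr\}.
\]
The classical lemma (see for instance Bourdon, or Chapter~7 of \cite{ghys1990espaces}) says that under the smallness condition on $\epsilon$, one has $\tfrac{1}{2}\rho_{\epsilon}\leq d_{\epsilon}\leq \rho_{\epsilon}$, so $d_{\epsilon}$ is a genuine metric inducing the boundary topology. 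Verifying this bi-Lipschitz comparison is the main technical step and where the hyperbolicity constant really enters; I would quote this standard fact rather than redo the chain-argument calculation.

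Finally, for an isometry $f$ of $X$ with $f(x)=x$, the identities
\[
d(f(y),f(z))=d(y,z),\qquad d(f(y),x)=d(f(y),f(x))=d(y,x),
\]
show that $(f(y),f(z))_{x}=(y,z)_{x}$ for all $y,z\in X$. Passing to sequences, $\bar{f}$ preserves the extended Gromov product on $\partial X$, hence preserves $\rho_{\epsilon}$, and therefore preserves the infimum defining $d_{\epsilon}$; that is, $\bar f$ is an isometry of $(\partial X,d_{\epsilon})$. The only real obstacle is the triangle-inequality/comparison lemma for the visual metric, which I would cite rather than prove; everything else is a direct verification.
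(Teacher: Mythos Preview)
Your proposal is correct: the visual metric based at $x$ is exactly the standard construction, and the observation that an isometry fixing $x$ preserves the Gromov product based at $x$, hence the pre-metric $\rho_\epsilon$, hence the chain metric $d_\epsilon$, is the right argument. Note however that the paper does not actually give a proof of this lemma at all --- it is stated as a background fact in the overview of hyperbolic spaces, with the reader referred to \cite{bridson2011metric} and \cite{ghys1990espaces} for details. Your sketch is essentially what one finds in those references (particularly Chapter~7 of \cite{ghys1990espaces}), so there is nothing to compare: you have supplied the standard proof that the paper chose to cite rather than reproduce.
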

    
    It is a classical result that any isometry $f$ of a hyperbolic space $(X,d)$ falls into exactly one of three following cases. If orbits of points of $X$ by $f$ are bounded we say $f$ is \textbf{elliptic}; if $\bar{f}$ fixes a unique point of $\partial X$, then we say $f$ is \textbf{parabolic}. 
    The remaining case is that in which $\bar{f}$ fixes exactly two points $e^+, e^-$ of $\partial X$ - in that case we say it is \textbf{hyperbolic}. When $f$ is hyperbolic the action of $\bar{f}$ on $\partial X$ exhibits a North-South dynamics: there are respective neighbourhoods $U^{+},U^{-}$ of each of the two points $e^{+},e^{-}$ such that for some $N\in\N$ any positive power of $\bar{f}^{N}$ sends $\partial\mathcal{C}\setminus U^{-}$ into $U^{+}$ and any negative one sends $\partial\mathcal{C}\setminus U^{+}$ into $U^{-}$.
    
    A geodesic ray in $X$ corresponds to a unique point at infinity to which
    all divergent sequences of points in it converge. Likewise, a bi-infinite geodesic in $X$ corresponds to two distinct points, or ends, in the boundary.
    Two geodesics determine the same pair of points if and only if they are at finite Hausdorff distance or, equivalently if for parametrizations $\tau$ and $\tau'$ of said geodesics
    the distance $d(\tau(t),\tau(t'))$ is bounded from above by certain universal constant $P$ which depends only on the hyperbolicity constant $\delta$.
    In that situation we say that the two geodesics are parallel.
    
    \begin{definition}
    An axis for an hyperbolic isometry $f$ of a hyperbolic metric space $X$ is a bi-infinite geodesic $A$ in $X$ such that $f$ acts almost as a translation on $X$, i.e. such that $f^{n}(A)$ and $A$ are parallel for any $n\in \Z$. 
    
    Equivalently, $A$ is a geodesic joining the two fixed points $e^+,e^-$ of $\bar{f}$ in $\partial X$.
    \end{definition}

    \begin{lemma}
    \leavevmode\label{spacing}
    Let $f$ be a hyperbolic isometry of $X$ which has an axis $A$. Given $x\in X$, if we let $x_{n}=f^{n}(x)$ and $y_{n}$ a point in $A$ at minimum distance from $x_{n}$, then
    \begin{itemize}
    	\item there is $D>0$ such that the family $\{y_{j}\}$ is $D$-dense in $A$.
    	\item for some $K,T>0$ and any $i,j\in\Z$ we have
    	\begin{align*}
    		T|i-j|-K\leq d(y_{i},y_{j}),d(x_{i},x_{j})\leq T|i-j|+K
    	\end{align*}
    \end{itemize}
    \end{lemma}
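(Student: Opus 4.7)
The plan is to exploit the fact that $A$ is $f$-invariant up to parallelism, and then to reduce estimates on $(y_n)$ to estimates on the orbit $(x_n)$. Since $\bar f$ fixes the two endpoints $e^\pm$ of $A$ on $\partial X$, each $f^n(A)$ is a bi-infinite geodesic with the same pair of endpoints as $A$, hence parallel to $A$; by Lemma~\ref{stability} there is a universal constant $P$ with $d_H(f^n(A),A)\leq P$ for every $n\in\Z$. Writing $r=d(x,A)$ this gives $d(x_n,A)\leq d(f^n(x),f^n(A))+P=r+P$, hence $d(x_n,y_n)\leq r+P$ uniformly in $n$. A triangle inequality then yields $|d(x_i,x_j)-d(y_i,y_j)|\leq 2(r+P)$, so the two quantities need only be controlled up to an additive constant.

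For the upper bound of (2) one applies $f^{-i}$ to obtain $d(x_i,x_j)=d(x,x_{j-i})\leq |j-i|\,d(x,f(x))$, so $T=d(x,f(x))$ is an acceptable slope. For the lower bound I would invoke the standard fact that the orbit $n\mapsto f^n(x)$ of a hyperbolic isometry of a Gromov-hyperbolic space is a quasi-isometric embedding of $\Z$ into $X$; this follows from the positivity of the stable translation length $\ell(f)=\lim_n d(x,f^n x)/n$ (strict for any hyperbolic isometry) together with the uniform parallelism $d_H(f^n(A),A)\leq P$ just established, and gives constants $T,K>0$ with $T|i-j|-K\leq d(x_i,x_j)\leq T|i-j|+K$. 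Combined with the first paragraph this yields the same estimate for $d(y_i,y_j)$ after possibly enlarging $K$.

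For density, I would parametrize $A$ by arc length as $\gamma:\R\to A$ with $\gamma(\pm\infty)=e^\pm$ and write $y_n=\gamma(t_n)$. The bound $d(y_n,f(y_n))\leq d(y_n,x_n)+d(x_n,x_{n+1})+d(x_{n+1},f(y_n))\leq 2(r+P)+d(x,f(x))$ together with $d(f(y_n),A)\leq P$ forces $|t_{n+1}-t_n|$ to be uniformly bounded by some $D'$. The North--South dynamics of $\bar f$ on $\partial X$ imply that $t_n\to\pm\infty$ as $n\to\pm\infty$: any neighbourhood of $e^\pm$ eventually contains all $y_n$ with $n$ of the corresponding sign and large absolute value. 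An intermediate-value argument then shows that $\{t_n\}$ is $D'$-dense in $\R$, and hence that $\{y_n\}$ is $D'$-dense in $A$. The main obstacle I anticipate is ensuring that the additive errors incurred each time one passes between $x_n$, $y_n$, and their $f$-translates do not accumulate with $n$; this is precisely what the uniform parallelism bound $d_H(f^n(A),A)\leq P$ (with $P$ independent of $n$) is there to prevent, which is why it is crucial that $\bar f$ \emph{fixes} the endpoints of $A$ rather than merely permuting them.
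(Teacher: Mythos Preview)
The paper does not actually prove this lemma; it is stated as a standard fact about hyperbolic isometries admitting an axis and used later as a black box. Your proposal therefore supplies an argument where the paper gives none, and the argument is essentially correct.

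Two small points. First, the bound $d_H(f^n(A),A)\leq P$ that you attribute to Lemma~\ref{stability} is not a direct consequence of that lemma (which concerns finite segments with nearby endpoints); rather, it is the fact stated just before the definition of an axis in the paper, namely that two bi-infinite geodesics with the same pair of endpoints in $\partial X$ are at Hausdorff distance bounded by a universal constant $P=P(\delta)$. Second, in your density step the detour through $f(y_n)$ is unnecessary: from $d(x_n,y_n)\leq r+P$ and $d(x_n,x_{n+1})=d(x,f(x))$ you get directly $d(y_n,y_{n+1})\leq 2(r+P)+d(x,f(x))$, and since $A$ is parametrised by arc length this is exactly $|t_{n+1}-t_n|$. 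The intermediate-value argument you sketch (choose the last index $n$ in a finite range with $t_n\leq s$ and use $|t_{n+1}-t_n|\leq D'$) then gives $D'$-density once you know $t_n\to\pm\infty$, which follows from the boundedness of $d(x_n,y_n)$ together with convergence of the orbit to the endpoints.
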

    
    In general hyperbolic isometries can only be guaranteed to admit a quasi-axis -something less restrictive- rather than an axis as described above. They do however in the context relevant here; it is not essential, but will allow to simplify our presentation. 
    
  \subsection{Free product}
    
    \newcommand{\Sa}[0]{\pi_{1}(\Sigma_{0})}
    \newcommand{\So}[0]{\pi_{1}(\Sigma_{1})}
    \newcommand{\St}[0]{\pi_{1}(\Sigma_{1})}
    \newcommand{\Si}[0]{\pi_{1}(\Sigma_{0})}
    
    For any surface with boundary $\Sigma$, denote by $Aut^{*}(\pi_{1}(\Sigma))$ be the subgroup of automorphisms of $\pi_{1}(\Sigma)$ that restrict to an inner automorphism on each boundary subgroup and by $Out^{*}(\pi_{1}(\Sigma))$ its image in $Out(\pi_{1}(\Sigma))$.  
    
   The goal of this section is to prove Proposition \ref{FreeProduct}. Recall we are given a surface $\Sigma_1$, and a subsurface $\Sigma_0$ of $\Sigma_1$ such that $\Sigma_1 \setminus \Sigma_0$ is connected. We also have an embedding of the group $H_{\Sigma_0}$ in $H_{\Sigma_1}$, where $H_{\Sigma_1}$ is the group of automorphisms of $\pi_1(\Sigma_1)$ restricting to the identity on some fixed boundary subgroup $B$ and to a conjugation on the others.
  
    The natural map $\phi:H_{\Sigma_i}\to Out^{*}(\pi_1(\Sigma_{i}))$ is easily seen to be onto. The image of $H_{\Sigma_{0}}$ in $Aut^{*}(\So)$ fixes the subgroup $\pi_1(\Sigma_1\setminus\Sigma_0)$, which has trivial centralizer, so $\phi$ is necessarily injective on $H_{\Sigma_{0}}$.
    
    To prove the proposition it is thus enough to prove the existence of a non-trivial element $\tau' \in Out^{*}(\pi_1(\Sigma_1))$ such that
    $\subg{\tau',\phi(H_{\Sigma_{0}})}$ is isomorphic to $\subg{\tau'}\frp\phi(H_{\Sigma_{0}})$.
    Indeed, in that case it suffices to take as $\tau$ any lift of $\tau'$ to $H_{\Sigma_1}$. 
    
  \subsection{The modular group and the complex of curves}

    Consider a general compact surface  $\Sigma$ with nonempty boundary. The modular group $Mod(\Sigma)$ of a surface $\Sigma$ with boundary is the quotient $\Hom^+(\Sigma, \partial \Sigma)/ \sim$ where
    \begin{itemize}
    \item $\Hom^+(\Sigma, \partial \Sigma)$ is the group of orientable homeomorphisms of $\Sigma$ fixing the boundary $\partial\Sigma$,
    \item two homeomorphism are related by $\sim$ if they are homotopic by a homotopy which fixes the boundary setwise.
    \end{itemize}
    (Properly speaking the object defined above is referred to as the modular group of the surface obtained by replacing each boundary component by a puncture.)

    Every element $[h]\in Mod(\Sigma)$ determines a unique element $[h_{*}]\in Out^{*}(\pi_{1}(\Sigma))$. The resulting correspondence can be shown to be a homomorphism. The Dehn-Nielsen-Baer theorem tells us this map is in fact an isomorphism (see \cite[Theorem 8.1]{farb2011primer}), that is, $\Out^*(\pi_1(\Sigma)) \simeq \Mod(\Sigma)$. 
    
    A simple closed curve in $\Sigma$ is called essential if it is two sided and it cannot be homotoped into a boundary component.
    
    Suppose now $\Sigma$ has $b$ boundary components and genus $g$ such that $3g+b\geq 4$. The curve graph of $\Sigma$, which we will denote by $\mathcal{C}=\mathcal{C}(\Sigma)$, has as vertex set the collection of homotopy classes of essential simple closed curves. An edge joins two distinct homotopy classes precisely when their geometric intersection number is minimal among all pairs of non-homotopic essential simple closed curves (in particular, if $3g+b\geq 5$, if the two classes can be realized disjointly). This graph is connected and we can regard it as a path connected metric space.
    
    There is an obvious action of the group of homeomorphisms of $\Sigma$ on $\mathcal{C}$, which is easily seen to factor through the quotient $Mod(\Sigma)$.
    
    Now let us go back to our pair of surfaces $\Sigma_{0}$ and $\Sigma_{1}$, and groups of automorphisms $H_{\Sigma_1}, H_{\Sigma_0}$.
     As noticed above, the group $H_{\Sigma_1}$ maps onto $\Out^*(\pi_1(\Sigma_1))$, equivalently onto $G:=Mod(\Sigma_{1})$. Notice that since $H_{\Sigma_{0}}$ fixes at least one element corresponding to an essential simple closed curve its image $G_{0} = \phi(H_{\Sigma_0})$ in $G$ has to fix at least one vertex of $\mathcal{C}$. 
    
    Elements of $Mod(\Sigma_1)$ that act hyperbolically on the space $\mathcal{C}$ are called pseudo-Anosov.
    They are known to admit a geodesic axis $A$ (see Proposition 7.6 in \cite{masur2000geometry}). 
    We will prove the following:
    \begin{proposition}
    \label{perturbation claim} For any pseudo-Anosov $\gamma\in G = Mod(\Sigma_1)$ with pseudo-axis $A$, there are neighbourhoods $U^{+}$ and $U^{-}$ of the ends $e^{+}$ and $e^{-}$ of $A$ such that for any $h\in G_{0}\setminus\{1\}$:
    \begin{align*}
    	\bar{h}\cdot\{e^{+},e^{-}\}\cap (U^{+}\cup U^{-})=\nil
    \end{align*}
    \end{proposition}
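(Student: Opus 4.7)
The plan is to argue by contradiction, using Bowditch's theorem that the action of $\Mod(\Sigma_1)$ on the curve graph $\mathcal{C}=\mathcal{C}(\Sigma_1)$ is acylindrical, together with the stabilizer theory of pseudo-Anosov endpoints.

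First I would fix a vertex $v_0\in\mathcal{C}$ which is fixed by all of $G_0$: any element of $H_{\Sigma_0}$ preserves the conjugacy class of each boundary subgroup of $\pi_1(\Sigma_0)$, so its image in $\Mod(\Sigma_1)$ fixes the isotopy class of any boundary component of $\Sigma_0$ inside $\Sigma_1$. Since $G_0\leq\Stab(v_0)$, Lemma~\ref{IsometryBoundary} gives that $G_0$ acts isometrically on $\partial\mathcal{C}$ for a visual metric based at $v_0$.

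Suppose for contradiction that no such neighbourhoods $U^{\pm}$ exist, and extract a sequence $\{h_n\}\subset G_0\setminus\{1\}$ of distinct elements together with $\xi_n\in\{e^+,e^-\}$ such that $\bar h_n(\xi_n)\to \zeta\in\{e^+,e^-\}$. Passing to a subsequence I may take $\xi_n=\xi$ constant. If $\xi\neq\zeta$, consider products $h_n h_m^{-1}$ for $n\gg m$: since $\bar h_m(\xi)$ and $\bar h_n(\xi)$ are both close to $\zeta$, the isometry $\overline{h_n h_m^{-1}}$ sends a point close to $\zeta$ to another point close to $\zeta$, so by the isometric property of the visual metric $\overline{h_n h_m^{-1}}(\zeta)$ is itself close to $\zeta$. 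Choosing a diagonal subsequence yields an infinite family of distinct non-trivial elements $g_k\in G_0$ with $\bar g_k(\zeta)\to\zeta$. Relabeling $\zeta=e^+$, I reduce to the case $\bar h_n(e^+)\to e^+$.

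The convergence $(\bar h_n(e^+), e^+)_{v_0}\to\infty$, combined with $h_n v_0 = v_0$, translates via $\delta$-hyperbolicity and Lemma~\ref{stability} into the following: for a fixed geodesic ray $\alpha$ from $v_0$ to $e^+$, the rays $\alpha$ and $h_n\alpha$ fellow-travel within a universal $K=O(\delta)$ for an initial segment of length $T_n\to\infty$. In particular, for any fixed $D$, eventually $h_n$ moves both $v_0$ and $\alpha(D)$ by at most $K$. Bowditch's acylindricity theorem for $\Mod(\Sigma_1)\curvearrowright\mathcal{C}$ then forces $\{h_n\}$ to be finite once $D$ is chosen sufficiently large; after extraction, $h_n=h\in G_0\setminus\{1\}$ is constant and satisfies $\bar h(e^+)=e^+$.

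The main obstacle would then be to force $h=1$: here $h$ fixes both the vertex $v_0$ and the pseudo-Anosov endpoint $e^+$. By a theorem of McCarthy, the stabilizer of $e^+$ in $\Mod(\Sigma_1)$ is virtually cyclic, generated by a root of $\gamma$ up to finite index; since no non-trivial power of $\gamma$ fixes a vertex of $\mathcal{C}$, $h$ must lie in the finite torsion subgroup of this stabilizer. As elements of $G_0$ are supported on the proper subsurface $\Sigma_0$ while the stable lamination of $\gamma$ fills $\Sigma_1$, a careful analysis---perhaps requiring a suitably generic choice of $\gamma$, or a lamination-theoretic argument exploiting that $\Sigma_1\setminus\Sigma_0$ is connected and non-degenerate---should yield $h=1$, giving the desired contradiction.
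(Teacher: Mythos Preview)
Your overall strategy---use that $G_{0}$ fixes a vertex of $\mathcal{C}$ and then invoke acylindricity/WPD to control elements that nearly preserve the axis of $\gamma$---is essentially the paper's. The paper argues directly rather than by contradiction: it proves a dichotomy lemma from WPD (any $g\in G$ either moves some point of every sufficiently long subsegment of $A$ far from $A$, or preserves $\{e^{+},e^{-}\}$ setwise), shows separately that the setwise stabiliser of $\{e^{+},e^{-}\}$ meets $G_{0}$ trivially, and then writes down $U^{\pm}$ explicitly as Gromov-product neighbourhoods.

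There is, however, a genuine gap at your final step. You correctly arrive at a non-trivial $h\in G_{0}$ fixing $e^{+}$, place it in the virtually cyclic group $E(\gamma)$, and---since $h$ fixes the vertex $v_{0}$ while no non-trivial power of $\gamma$ does---conclude that $h$ is torsion. But you do not finish: the proposed ``lamination-theoretic argument'' is not an argument, and the alternative of choosing ``a suitably generic $\gamma$'' is illegitimate, since the proposition is stated for \emph{every} pseudo-Anosov. The missing one-line observation, which is precisely what the paper uses to prove its lemma that $\mathcal{G}\cap G_{0}=\{1\}$, is that the mapping class group of a surface with non-empty boundary is torsion-free (the paper cites \cite[Corollary~7.3]{farb2011primer}); hence $h=1$ and the contradiction is complete. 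A minor secondary issue: the negation of the statement does not give \emph{distinct} $h_{n}$; if only finitely many elements appear in your sequence, some fixed $h$ already satisfies $\bar h(\xi)\in\{e^{+},e^{-}\}$ exactly, and (replacing $h$ by $h^{2}$ if it swaps the ends, and using torsion-freeness once more to ensure $h^{2}\neq 1$) you pass directly to the final step.
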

    We shall now show how to derive Proposition \ref{FreeProduct} from this claim and then devote the rest of the section to its proof.
    \begin{proof}(of Proposition \ref{FreeProduct})
    Take some pseudo-Anosov $\gamma\in G$ and let $A$ be a geodesic axis for $\gamma$. We will show that the subgroup of $G$ generated by $G_0$ and some power $\gamma^n$ of $\gamma$ is isomorphic to $G_0 * \langle \gamma^n \rangle$. 

By Lemma \ref{IsometryBoundary}, given any vertex $v \in \mathcal{C}$, there is a metric on $\partial \mathcal{C}$ such that any element of $G$ fixing $v$ acts isometrically on $\partial \mathcal{C}$. We choose such a metric for $v$ a vertex fixed by $G_0$. 

    This allows one to replace the claim made by Proposition \ref{perturbation claim} by the even stronger assertion that $\bar{h}(U^{+}\cup U^{-})\cap (U^{+}\cup U^{-})=\nil$ for any $h\in G_{0}\setminus\{1\}$. Indeed, one can assume that $U^{\pm}$ is a ball centered at $e^{\pm}$ to begin with and then
    replace it with a ball of half its radius.
    
    Consider the sets $\mathcal{A}=U^{+}\cup U^{-}$ and $\mathcal{B}=\partial\mathcal{C}\setminus\mathcal{A}$. Recall that $\gamma$ has a North-South dynamics on the boundary, that is, for some $N\in\N$ any $f\in\subg{\gamma^{N}}\setminus\{1\}$ satisfies
    $$	\bar{f}\mathcal{B}\subset \mathcal{A}$$
    On the other hand, we have seen that for any non trivial element $h$ of $G_0$ we have
    $$ \bar{h}\mathcal{A}\subset\mathcal{B} $$
    thus if we take $\tau=\gamma^{N}$ the result follows as a direct consequence of the ping-pong lemma.
    \end{proof}
    
    The main ingredient in the proof of \ref{perturbation claim} is the following result (see Proposition 11 from \cite{bestvina2010asymptotic}), which states that the action of $G$ on $\mathcal{C}$ satisfies the WPD property.
    
    \begin{proposition}
    \label{WPD property}Let $G$ be the modular group of a closed surface with punctures. For every element $\gamma\in G$ which acts hyperbolically on $\mathcal{C}$, any $x\in\mathcal{C}$ and every $C>0$ there is $N=N_{x}(C)>0$ such that the following set is finite:
    \begin{align*}
    	\mathcal{F}_{x}(C)= \setof{g\in G}{d(x,g\cdot x),d(\gamma^{N}\cdot x,g\gamma^{N}\cdot x)\leq C}
    \end{align*}
    \end{proposition}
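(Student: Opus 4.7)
The statement is the WPD (weak proper discontinuity) property for the action of the modular group on the curve complex, and the plan is to exploit that pseudo-Anosov elements have a genuine geodesic axis $A$ in $\mathcal{C}$ with North--South dynamics at infinity, together with the classical fact that the stabilizer of the endpoints of such an axis is virtually cyclic. The overall strategy is to show that any element $g$ in $\mathcal{F}_x(C)$ must, for $N$ large enough, nearly preserve a long segment of $A$, hence preserve its endpoints at infinity, hence lie in a virtually cyclic subgroup in which the first condition $d(x, g\cdot x)\leq C$ cuts out only finitely many elements.

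First, since $\gamma$ acts hyperbolically on $\mathcal{C}$ it is pseudo-Anosov and admits a geodesic axis $A$ with endpoints $e^+, e^- \in \partial \mathcal{C}$. Let $x_0$ be a nearest point of $A$ to $x$, so that $\gamma^n \cdot x_0$ is a nearest point of $A$ to $\gamma^n \cdot x$ for each $n$. By Lemma \ref{spacing}, $d(x_0, \gamma^N x_0)$ grows linearly in $N$ with a positive slope (the translation length of $\gamma$), and by hyperbolicity the geodesic segment $[x, \gamma^N x]$ lies at bounded Hausdorff distance from the segment of $A$ between $x_0$ and $\gamma^N x_0$, with a constant independent of $N$.

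For $g \in \mathcal{F}_x(C)$, the image $g\cdot[x,\gamma^N x] = [gx, g\gamma^N x]$ is a geodesic whose endpoints are within $C$ of those of $[x, \gamma^N x]$, so by the stability lemma \ref{stability} the two segments lie at Hausdorff distance at most $C + Q$. Concatenating with the previous step shows $g$ maps a long sub-arc of $A$ to within bounded Hausdorff distance of $A$ itself. By hyperbolicity of $\mathcal{C}$ and a standard Gromov-product computation at a basepoint, if $N$ is chosen large enough (depending only on $x$, $C$, $Q$ and $\delta$) this approximate preservation forces $\overline{g}\cdot \{e^+, e^-\} = \{e^+, e^-\}$, i.e. $g$ lies in the setwise stabilizer $\Stab(\{e^+,e^-\})$ in $G$.

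A classical result in the theory of mapping class groups (originally due to McCarthy) asserts that the setwise stabilizer of the pair of fixed points of a pseudo-Anosov on $\partial \mathcal{C}$ is virtually cyclic, containing $\langle \gamma \rangle$ as a finite-index subgroup. Among elements of a virtually cyclic subgroup of $G$, the condition $d(x, g\cdot x) \leq C$ isolates only finitely many candidates, since $\gamma$ acts with positive translation length and so Lemma \ref{spacing} forces the exponent of $\gamma$ appearing in $g$ to be bounded in terms of $C$. The main obstacle is the passage from ``$g$ approximately preserves a long axis segment'' to ``$g$ fixes the pair of endpoints at infinity'': here one has to choose $N$ large relative to the Hausdorff constant $C+Q$ and $\delta$, and exploit that a sufficiently long initial subsegment of $A$ determines a neighbourhood of $e^+$ in $\partial \mathcal{C}$ via the Gromov-product topology. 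The remaining input, the virtual cyclicity of $\Stab(\{e^+, e^-\})$, is cited rather than reproved.
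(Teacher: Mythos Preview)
The paper does not prove this proposition at all; it is quoted as Proposition~11 of \cite{bestvina2010asymptotic} and used as a black box. Your proposal attempts an independent proof, and that attempt has a genuine gap.

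The problematic step is the passage from ``$g$ approximately preserves a long segment of the axis $A$'' to ``$\bar g$ fixes the pair $\{e^+,e^-\}$ setwise''. You assert that this follows from hyperbolicity of $\mathcal{C}$ and a Gromov-product computation, but it does not. Knowing that $d(x,g\cdot x)\leq C$ and $d(\gamma^{N}\cdot x,g\gamma^{N}\cdot x)\leq C$ forces, via Lemma~\ref{stability}, that $g\cdot A$ fellow-travels $A$ \emph{between} the two points $x_0$ and $\gamma^{N}x_0$; hyperbolicity says nothing about what $g\cdot A$ does beyond that segment, and there is no general reason its ends at infinity should coincide with $e^{\pm}$. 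A Gromov-product argument would at best show that $g\cdot e^{+}$ lies in some neighbourhood of $e^{+}$ whose size depends on $N$, never that it equals $e^{+}$. In a non-discrete isometry group of a hyperbolic space one easily produces isometries nearly fixing an arbitrarily long subsegment of $A$ while sending $e^{\pm}$ elsewhere; ruling this out in the mapping class group is exactly the content of the WPD property you are trying to establish.

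Indeed, look at Lemma~\ref{dychotomy lemma} just below in the paper: its conclusion is precisely the implication you want (near-preservation of a long segment forces $g$ to permute $\{e^{+},e^{-}\}$), and its proof \emph{uses} Proposition~\ref{WPD property} via a pigeonhole argument on the finite set $\mathcal{F}_{x_{0}}(Q)$ to extract a commutation relation $g\gamma^{e}g^{-1}=\gamma^{\pm e}$. So your argument is circular relative to the paper's logic. McCarthy's theorem on virtual cyclicity of the stabilizer of $\{e^{+},e^{-}\}$ only becomes applicable \emph{after} you know $g$ lies in that stabilizer; it cannot be used to place $g$ there in the first place.
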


    For the rest of the section let $\gamma$ be a fixed pseudo-Anosov element of $G$ and $A$ a quasi-axis of $\gamma$.
    The key observation is that the theorem above implies a certain dichotomy holds for any $g\in G$: either $g\cdot A$ is parallel to $A$, i.e. at a close Hausdorff distance from $A$, or $g$ substantially perturbs $A$ - this is the object of the following result.
    \begin{lemma}
    \label{dychotomy lemma} Let $A$ be a geodesic quasi-axis for the pseudo-Anosov element $\gamma\in G$. Then for any $C>0$ big enough there are $M=M(C,\gamma) > 0$ such that for any $g\in G$ either:
    \begin{enumerate}[i)]
    	\item For any two points $x,y\in A$ at distance at least $M$ the inequality $d(g\cdot z,A)>C$ holds for at least one of the two points $x,y$.
    	\item The element $g$ preserves the ends of $A$ in $\partial\mathcal{C}$ set-wise. In that case  if $g$ fixes both ends of $A$ then there is $e\in \N$ such that $g\gamma^{e}g^{-1}=\gamma^e$, and  if it swaps them $g \gamma^{e} g^{-1}=\gamma^{-e}$.
    \end{enumerate}
    \end{lemma}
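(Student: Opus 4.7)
The plan is to argue by contraposition: assume case (i) fails for $M$ to be chosen large enough (in terms of $C$ and $\gamma$), so that there exist $x, y \in A$ with $d(x,y) \geq M$ and $d(g\cdot x, A), d(g\cdot y, A) \leq C$, and then derive (ii). The main tools are the WPD property of $\gamma$ (Proposition \ref{WPD property}) together with the quasi-isometric discretization of $A$ by the orbit $\{\gamma^n x_0\}$ provided by Lemma \ref{spacing}. Fix a basepoint $x_0 \in A$ and approximate each of $x, y, g\cdot x, g\cdot y$ by an orbit point: $x \approx \gamma^a x_0$, $y \approx \gamma^b x_0$, $g\cdot x \approx \gamma^{a'} x_0$, $g\cdot y \approx \gamma^{b'} x_0$. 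Setting $N := b-a$ and $N' := b'-a'$, the fact that $g$ is an isometry forces, via comparison of $d(\gamma^a x_0, \gamma^b x_0)$ with $d(\gamma^{a'} x_0, \gamma^{b'} x_0)$, the estimate $|N'| = |N| + O(1)$, so $N' = \pm N$ up to bounded error.

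The first step is to show that $g$ preserves the pair of endpoints $\{e^+, e^-\}$ of $A$ in $\partial\mathcal{C}$. Consider $h := \gamma^{-a'} g\, \gamma^a$. From the approximations, $h$ moves $x_0$ by a bounded amount and sends $\gamma^N x_0$ close either to $\gamma^N x_0$ (when $N' \approx N$) or to $\gamma^{-N} x_0$ (when $N' \approx -N$). Hence the geodesic segment $h\cdot[x_0, \gamma^N x_0] \subseteq h\cdot A$ runs at bounded Hausdorff distance from a subsegment of $A$ of length $\asymp TM$. Choosing $M$ large compared to the stability constant $Q$, the hyperbolicity constant $\delta$, and the spacing constants $D, T, K$ from Lemma \ref{spacing}, the standard fact that two bi-infinite geodesics in a $\delta$-hyperbolic space which fellow-travel on a sufficiently long subsegment must share their pair of endpoints at infinity forces $h\cdot A$ to be parallel to $A$. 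Thus $h$ preserves $\{e^+, e^-\}$, and since $\gamma$ fixes each of them individually, so does $g = \gamma^{a'} h\, \gamma^{-a}$. The case $N' \approx N$ corresponds to $g$ fixing both ends; $N' \approx -N$ to $g$ swapping them.

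The second step is to extract the commutation relation via WPD. With $g\cdot A$ at bounded Hausdorff distance from $A$, the conjugate $g\gamma^n g^{-1}$ has an axis close to $A$ and translates along $A$ by $n\tau_\gamma$ in the direction dictated by $g$'s action on the ends. Accordingly, set
\[ h_n := \begin{cases} g\,\gamma^n\, g^{-1}\, \gamma^{-n} & \text{if $g$ fixes both ends,} \\ g\,\gamma^n\, g^{-1}\, \gamma^{n} & \text{if $g$ swaps them.} \end{cases} \]
A short computation yields a constant $C_2$, depending only on the parallel distance between $g\cdot A$ and $A$ and the constants of Lemma \ref{spacing}, such that $d(x_0, h_n x_0) \leq C_2$ and $d(\gamma^{N_1} x_0, h_n \gamma^{N_1} x_0) \leq C_2$ for every integer $N_1$ and every $n \in \Z$. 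Applying Proposition \ref{WPD property} with the constant $C_2$ produces some $N_1 = N_{x_0}(C_2)$ for which $\mathcal{F}_{x_0}(C_2)$ is finite; since $\{h_n\}_{n \in \Z}$ is an infinite family contained in $\mathcal{F}_{x_0}(C_2)$, pigeonhole yields $n_1 \neq n_2$ with $h_{n_1} = h_{n_2}$. Setting $e := n_1 - n_2 \in \N$ and rearranging yields $g\gamma^e g^{-1} = \gamma^e$ in the fixing case and $g\gamma^e g^{-1} = \gamma^{-e}$ in the swapping case.

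The main obstacle is the first step: upgrading ``$h\cdot A$ fellow-travels $A$ on a long subsegment'' to ``$h\cdot A$ shares its pair of endpoints with $A$''. This requires both a careful choice of $M$ relative to $\delta$, $Q$, $T$, $K$, $D$, and the invocation of the hyperbolic-space principle that bi-infinite geodesics with distinct endpoints at infinity diverge outside any bounded fellow-travelling window. Once this is secured, the commutation step reduces to uniform displacement bookkeeping followed by a clean WPD/pigeonhole argument.
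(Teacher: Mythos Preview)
Your second step is fine, but your first step contains a genuine gap that cannot be fixed by choosing $M$ large. The ``standard fact'' you invoke---that two bi-infinite geodesics in a $\delta$-hyperbolic space which fellow-travel on a sufficiently long subsegment must share their pair of endpoints at infinity---is false. In $\mathbb{H}^2$, for any $C>0$ and any $M>0$ one can find two geodesics with disjoint endpoint pairs that are $C$-close on a segment of length $M$ (slide one geodesic slightly off the other); more blatantly, two asymptotic geodesics sharing exactly one endpoint fellow-travel on an entire ray. So from ``$h\cdot A$ is $C$-close to $A$ on a segment of length $\asymp M$'' you simply cannot conclude $h\cdot A$ is parallel to $A$, no matter how $M$ is chosen relative to $\delta,Q,T,K,D$. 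Your proposed route (fellow-travel $\Rightarrow$ parallel $\Rightarrow$ WPD on the infinite family $\{h_n\}$) therefore breaks at the first arrow.

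The paper's proof reverses the order of the two steps and thereby avoids this issue entirely. From the fellow-travelling on the long segment one extracts not an infinite family but a \emph{large finite} family of sliding conjugates $g_j=\gamma^{-l-j}g\,\gamma^{k+j}$ (or $g_j=\gamma^{-k}g^{-1}\gamma^{j}g\,\gamma^{j+k}$ in the orientation-reversing case), indexed by $j$ in an interval of length $\asymp M$, each of which moves both $x_0$ and $\gamma^{N}x_0$ by at most a constant $Q$ depending only on $C,\delta,\gamma$. Now WPD says $\mathcal{F}_{x_0}(Q)$ is finite, so choosing $M>|\mathcal{F}_{x_0}(Q)|+N_{x_0}(Q)$ forces two of the $g_j$ to coincide; this equality rearranges directly to $g\gamma^{e}g^{-1}=\gamma^{\pm e}$. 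The conclusion that $g$ preserves the ends of $A$ is then a \emph{consequence} of this commutation relation (since $g$ conjugates a power of $\gamma$ to a power of $\gamma$, it must send the fixed-point pair of $\gamma$ to itself), not a prerequisite for it. In short: apply WPD/pigeonhole to the $M$-parameter family coming from the long overlap, and read off both the commutation and the endpoint preservation at once.
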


    \begin{proof}     Fix $x_{0}\in A\cap\mathcal{C}^{0}$. The points $x_{j}=\gamma^{j}\cdot x_{0}$ for $j\in\mathbb{Z}$ are at bounded distance from $A$, and the map $j \mapsto x_j$ is a quasi isometry. Thus there exists a constant $D$ such that any point on $A$ is $D$-close to one of the points $x_j$.
    
    Let $u$ and $v$ be points on $A$ which we think of as "far apart". Let $k \in \Z, M>0$ be such that $x_{k-M}$ is $D$-close to $u$ while $x_{k+M}$ is $D$-close to $v$. Note that $M$ goes to infinity as $d(u,v)$ goes to infinity.
    
   Suppose we are not in the first case, i.e. that we have $d(g \cdot u,A), d(g\cdot v, A) \leq C$. The image by $g$ of the geodesic segment of $A$ between $u$ and $v$ is at distance $C + 2 \delta$ of $A$, hence using Lemma \ref{spacing} we see that there is an index $l$ and a constant $Q$ which depends only on $\delta, C, D$ and $\gamma$ (and not on $u$ and $v$ or $g$) such that one of the two following possibilities holds
   \begin{enumerate}
   \item (orientation preserving) for each $j \in [-M, M]$ we have $d(g \cdot x_{k+j}, x_{l+j}) \leq Q$;
   \item (orientation reversing) for each $j \in [-M, M]$ we have $d(g \cdot x_{k+j}, x_{l-j}) \leq Q$.   
   \end{enumerate}
   
   Suppose we are in the first case, and pick some $N \leq M$: any element of the form $g_j =\gamma^{-l-j} g \gamma^{k+j}$ for $j \in [-M+N, M-N] \}$ satisfies 
   \begin{enumerate}
   \item[(i)] $d( g_j\cdot x_0, x_0) = d( \gamma^{-l-j} g \gamma^{k+j} \cdot x_0, x_0) =d( g \gamma^{k+j} \cdot x_0, \gamma^{l+j} \cdot x_0) = d( g \cdot x_{k+j}, x_{l+j})  \leq Q$;
   \item[(ii)] $d( g_j x_N, x_N)=d( \gamma^{-l-j} g \gamma^{k+j} x_N, x_N) = d( g \gamma^{k+j} x_N, \gamma^{l+j} x_N) = d( g \gamma^{k+(j+N)} \cdot x_0, \gamma^{l+(j+N)} \cdot x_0)\leq Q$
   \end{enumerate}
   
   If we are in the second case, note that for each $j \in [-M, M]$, applying $g \gamma^{j}$ to $x_{k}$ gives a point which is $Q$-close to $x_{l-j} = \gamma^{-j} \cdot x_l$, which is itself $Q$-close to $\gamma^{-j} g \cdot x_k$. Similarly, for any $N < M$ and $j \in [N-M, M-N]$, the points obtained by applying $g \gamma^j$ and $\gamma^{-j} g$ to $x_{k+N}$ are at distance at most $2Q$.
  Thus any element of the form $g_j = \gamma^{-k}g^{-1} \gamma^j g \gamma^{j+k}$ for $j \in [-M+N, M-N] \}$ satisfies 
   \begin{enumerate}
      \item[(i)] $d( g_j\cdot x_0, x_0) = d(\gamma^{-k}g^{-1} \gamma^j g \gamma^{j+k}\cdot x_0, x_0) = d(g \gamma^{j}\cdot x_k, \gamma^{-j} g \cdot x_k) \leq 2Q$;
      \item[(ii)] $d( g_j \cdot x_N, x_N) = d(\gamma^{-k}g^{-1} \gamma^j g \gamma^{j+k} \cdot x_N, x_N) = d(g \gamma^{j}\cdot x_{k+N}, \gamma^{-j} g\cdot x_{k+N}) \leq 2Q$;.
   \end{enumerate}
   
   Now suppose that $d(u, v)$ is large enough so that $M > \abs{{\cal F}_{x_0}(Q)} + N_{x_0}(Q)$ and $M > \abs{{\cal F}_{x_0}(2Q)} + N_{x_0}(2Q)$. 
   
   Then if we are in the first case, and since the elements $g_j$ for $j \in [-M+N_{x_0}(Q), M-N_{x_0}(Q)]$ are all in ${\cal F}_{x_0}(Q)$, by the pigeon hole principle, there must exist indices $i$ and $j$ such that $g_i =g_j$, that is $\gamma^{-l-j}g \gamma^{k+j}=\gamma^{-l-i}g \gamma^{k+i}$, which implies that $g\gamma^{i-j}g^{-1}=\gamma^{i-j}$. In particular $g$ fixes the two ends of $A$.
   
   If we are in the second case, since the elements $g_j$ for $j \in [-M+N_{x_0}(2Q), M-N_{x_0}(2Q)]$ are all in ${\cal F}_{x_0}(2Q)$, by the pigeon hole principle, there must exist indices $i$ and $j$ such that $g_i =g_j$, that is $\gamma^{-k}g^{-1} \gamma^j g \gamma^{j+k} = \gamma^{-k}g^{-1} \gamma^i g \gamma^{i+k}$ which implies that $g \gamma^{j-i} g^{-1} =\gamma^{i-j}$. Thus $g$ swaps the two ends of $A$.
   \end{proof}

    Let $\mathcal{G}$ be the subgroup of elements of $G$ which preserve the ends of $A$ and $\mathcal{G}^{+}$ that of those which fix them point-wise. Of course, $\mathcal{G}^{+}$ is a subgroup of index $\leq 2$ in $\mathcal{G}$.
    
    \begin{lemma}
    \begin{align*}
    	\mathcal{G}\cap G_{0}=\{1\}
    \end{align*}
    \end{lemma}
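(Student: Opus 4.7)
The plan is to exploit a tension between the two conditions defining $\mathcal{G}\cap G_{0}$: any non-trivial $h$ in it must almost-commute with $\gamma$ by Lemma \ref{dychotomy lemma}, but must also fix a vertex of $\mathcal{C}$ (for instance any essential curve lying in the connected non-peripheral subsurface $\Sigma_{1}\setminus\Sigma_{0}$, which is fixed pointwise by every element of $G_{0}$). Since centralizers of pseudo-Anosovs in $\Mod(\Sigma_{1})$ are virtually cyclic, these two properties should force $h$ to be torsion, at which point torsion-freeness of $G_{0}$ finishes the job.

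Concretely, suppose $h\in\mathcal{G}\cap G_{0}$ is non-trivial. Lemma \ref{dychotomy lemma}(ii) applied to $h$ gives $e\in\N$ with $h\gamma^{e}h^{-1}=\gamma^{\pm e}$. A direct computation in either case shows $h^{2}\gamma^{e}h^{-2}=\gamma^{e}$, so $h^{2}\in C_{G}(\gamma^{e})$.

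The next step would invoke the classical result (due to McCarthy; see also Farb--Margalit) that the centralizer of a pseudo-Anosov in the mapping class group of a finite-type surface is virtually infinite cyclic, commensurable with $\langle\delta\rangle$ for some pseudo-Anosov $\delta$ of which $\gamma^{e}$ is a power. This yields $N>0$ and $k\in\Z$ with $h^{2N}=\delta^{k}$. Now $h$ fixes a vertex $v\in\mathcal{C}$, so $h^{2N}$ does too, making it elliptic; but $\delta^{k}$ is pseudo-Anosov whenever $k\neq 0$, so $k=0$ and $h^{2N}=1$. Hence $h$ has finite order in $G$.

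To finish, I would invoke torsion-freeness of $G_{0}$: since $\phi$ is injective on $H_{\Sigma_{0}}$ we have $G_{0}\simeq H_{\Sigma_{0}}$, and the kernel of the projection $H_{\Sigma_{0}}\to\Out^{*}(\pi_{1}(\Sigma_{0}))$ consists of inner automorphisms fixing $B$ pointwise, hence equals the centralizer of $B$ in $\pi_{1}(\Sigma_{0})$, which is just $B$ itself (infinite cyclic). The image sits inside the mapping class group of $\Sigma_{0}$ relative to a chosen boundary component, which is torsion-free by a standard Nielsen-realization argument. Thus $H_{\Sigma_{0}}$ is torsion-free, forcing $h=1$ and contradicting our assumption. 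The main obstacle is locating and correctly citing the centralizer-of-pseudo-Anosov theorem in the exact form needed; the rest is bookkeeping combined with Lemma \ref{dychotomy lemma}.
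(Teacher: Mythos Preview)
Your argument is correct but takes a different route from the paper's. The paper works directly from the WPD property (Proposition~\ref{WPD property}): for $g\in\mathcal{G}^{+}\cap G_{0}$, fixing both a vertex $v$ and the two ends of $A$ forces a uniform bound on $d(x,g\cdot x)$ for all $x\in A$, and WPD then makes $\mathcal{G}^{+}\cap G_{0}$ finite; torsion-freeness of $\Mod(\Sigma_{1})$ (index $\leq 2$ handles $\mathcal{G}$) finishes. You instead import McCarthy's structural theorem on centralizers of pseudo-Anosovs to place $h^{2}$ in a virtually cyclic group and extract torsion from the ellipticity of $h$. The paper's approach stays entirely within the tools already set up in the section, while yours is shorter once McCarthy is granted but requires that external citation. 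One simplification: your final step is overworked, since the paper just invokes torsion-freeness of the whole group $G=\Mod(\Sigma_{1})$ (Farb--Margalit, Corollary~7.3), so there is no need to analyse $H_{\Sigma_{0}}$ separately.
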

    \begin{proof}
    Since we know that the modular group of surfaces with boundary is torsion free (see \cite[Corollary 7.3, p.201]{farb2011primer}), all we need to show is that $\mathcal{G}^{+}\cap G_{0}$ is finite.
    
    Notice that given a parametrization $\tau:\R\to X$ of $A$, we know that there is a constant $P>0$ such that for any $g\in\mathcal{G}^{+}$ there exists $T_g\in\R$ such that $d(g \cdot \tau(t), \tau(t+T_g))\leq P$ for any $t\in \R$.
    
    Now if $g \in G_0$ we have $d(\tau(0), g\cdot \tau(0)) \leq 2d(v, \tau(0))$ since $g \cdot v = v$. In particular $\abs{T_g} = d(\tau(0), \tau(T_g)) < 2d(v, \tau(0)) +P$. 
    Thus for any $g\in G_{0}\cap\mathcal{G}^{+}$ and $x = \tau(t) \in A$ we have 
    \begin{align*}
    d(x, g \cdot x) &= d(\tau(t), g \cdot \tau(t)) \\
                           &\leq d(\tau(t), \tau(t+T_g)) + d(\tau(t+T_g), g \cdot \tau(t)) \\
                           &\leq \abs{T_g}+P \leq 2d(v, \tau(0)) +2P.
    \end{align*}
    That is, the distance $d(x,g \cdot x)$ is bounded uniformly in $g \in G_{0}\cap\mathcal{G}^{+}$ and $x\in A$, which implies $d(\gamma^{m}x,g\gamma^{m}x)$ is also bounded uniformly in $g\in G_{0}\cap\mathcal{G}^{+}$ and $m\in\Z$ so that an application of Proposition \ref{WPD property} above finishes the proof.
    \end{proof}
    
    We are now in a condition to prove claim \ref{perturbation claim}:
    \begin{proof}
    Let $v$ be a vertex of $\mathcal{C}$ fixed by $G_{0}$ and $w$ one in $A$. Let $D=d(v, w)$.
    Denote by $\rho^{+}$ and $\rho^{-}$ the two geodesic rays in $\mathcal{C}$ starting at $v$, corresponding to the ends $e^{+}$ and $e^{-}$ in $\partial \mathcal{C}$ respectively.
    Let now $M:=M(3\delta,\gamma)$ be the constant provided by \ref{dychotomy lemma}.
    \newcommand{\M}[0]{M+D+2\delta}
    
    In the particular context of the graph $\mathcal{C}$ all points in the boundary can be seen as equivalence classes of geodesic rays. The following sets define neighbourhoods around $e^{+}=[\rho^{+}]$ and $e^{-}=[\rho^{-}]$ respectively.
    \begin{align*}
    	V^{+}=\{[\sigma]\,|\,\exists N\,\forall t\geq N\,\,(\sigma(t),\rho^{+}(t))_{v}\geq\M\}
    	\\ V^{-}=\{[\sigma]\,|\,\exists N\,\forall t\geq N\,\,(\sigma(t),\rho^{-}(t))_{v}\geq\M\}
    \end{align*}
    Now pick any $h\in G_{0}\setminus\{1\}$. We claim that $[h\cdot \rho^{+}]\notin V^{+}$. Indeed, suppose this was not the case.
    Then there are points $x\in\rho^{+}$ and $x'\in h\cdot\rho^{+}$ such that $(x,x')_{v}\geq\M$. Take geodesic segments
    $\theta$ and $\theta'$ from $v$ to $x$ and $x'$ respectively (they are initial segments of $\rho^+$ and $h \cdot \rho^+$ respectively).
    
    The assumption that $(x,x')_{v}\geq\M$ implies that points in $\theta$ and $\theta'$ at the same distance $\leq\M$ from $v$ are $\delta$-close to each other. 
    On the other hand, the complement in $\theta$ of its initial segment of length $D$ is contained in a $\delta$ neighbourhood of $A$. Hence the image by $h$ of some segment of length at least $M$ in $A$ is contained in a $3\delta$-neighbourhood of $A$, contradicting the choice of $M$. Using a similar argument for the remaining cases, we complete the proof of the fact that $[h\cdot\rho^{\pm}]\notin V^{\pm}$.
    \end{proof}
    
\bibliographystyle{plain}
\providecommand{\bysame}{\leavevmode\hbox to3em{\hrulefill}\thinspace}
\providecommand{\MR}{\relax\ifhmode\unskip\space\fi MR }
\providecommand{\MRhref}[2]{%
  \href{http://www.ams.org/mathscinet-getitem?mr=#1}{#2}
}
\providecommand{\href}[2]{#2}

\end{document}